\documentclass[11pt,oneside,a4paper]{article}

\usepackage{ifxetex}
\ifxetex
	\usepackage{amsthm,amsmath}
	\usepackage{fontspec}
	\usepackage[]{unicode-math}
	\setmainfont[
	Extension = .otf,
	UprightFont = *-regular,
	BoldFont = *-bold,
	ItalicFont = *-italic,
	BoldItalicFont = *-bolditalic,
	Ligatures=TeX,
	SmallCapsFont={texgyretermes-italic},
	SmallCapsFeatures={Letters=SmallCaps},
	]{xits}
	
	\setmathfont[
	Extension = .otf,
	BoldFont = *bold,
	]{xits-math}

	\usepackage{csquotes}
	\usepackage{polyglossia}
	\setdefaultlanguage{english}

\else
	\RequirePackage[utf8]{inputenc}
	\RequirePackage[T1]{fontenc}
	
	\RequirePackage[final]{microtype}
	
	\RequirePackage{amsfonts}
	\RequirePackage{amsthm,amsmath}
	\RequirePackage{amssymb}
	
	\usepackage{csquotes}
	\usepackage[english]{babel}
\fi

\usepackage[backend=biber, style=numeric, sorting=anyt, autopunct=true, maxnames=10, maxalphanames=10, natbib=true, arxiv=abs, doi=true, url=false, eprint=true]{biblatex}
\addbibresource{references.bib}
\usepackage[margin=1.5in]{geometry}
\usepackage[title,titletoc]{appendix}

\usepackage{graphicx}

\usepackage{float}
\floatstyle{plain}
\restylefloat{figure}

\usepackage{subcaption}
\captionsetup{justification=centering,font={small}}

\usepackage[ruled,section]{algorithm}
\usepackage{algpseudocode}

\usepackage[plain]{fancyref}

\newcommand*{\fancyrefthmlabelprefix}{thm}
\Frefformat{plain}{\fancyrefthmlabelprefix}{%
	Theorem\fancyrefdefaultspacing#1%
}%
\frefformat{plain}{\fancyrefthmlabelprefix}{%
	theorem\fancyrefdefaultspacing#1%
}%

\newcommand*{\fancyreflemmalabelprefix}{lemma}
\Frefformat{plain}{\fancyreflemmalabelprefix}{%
	Lemma\fancyrefdefaultspacing#1%
}%
\frefformat{plain}{\fancyreflemmalabelprefix}{%
	lemma\fancyrefdefaultspacing#1%
}%

\newcommand*{\fancyrefclaimlabelprefix}{claim}
\Frefformat{plain}{\fancyrefclaimlabelprefix}{%
	Claim\fancyrefdefaultspacing#1%
}%
\frefformat{plain}{\fancyrefclaimlabelprefix}{%
	claim\fancyrefdefaultspacing#1%
}%

\newcommand*{\fancyrefdeflabelprefix}{def}
\Frefformat{plain}{\fancyrefdeflabelprefix}{%
	Definition\fancyrefdefaultspacing#1%
}%
\frefformat{plain}{\fancyrefdeflabelprefix}{%
	definition\fancyrefdefaultspacing#1%
}%

\newcommand*{\fancyrefconjlabelprefix}{conj}
\Frefformat{plain}{\fancyrefconjlabelprefix}{%
	Conjecture\fancyrefdefaultspacing#1%
}%
\frefformat{plain}{\fancyrefconjlabelprefix}{%
	conjecture\fancyrefdefaultspacing#1%
}%

\newcommand*{\fancyrefcaselabelprefix}{case}
\Frefformat{plain}{\fancyrefcaselabelprefix}{Section\fancyrefdefaultspacing#1}%
\frefformat{plain}{\fancyrefcaselabelprefix}{section\fancyrefdefaultspacing#1}%

\newcommand*{\fancyrefremlabelprefix}{rem}
\Frefformat{plain}{\fancyrefremlabelprefix}{Remark\fancyrefdefaultspacing#1}%
\frefformat{plain}{\fancyrefremlabelprefix}{remark\fancyrefdefaultspacing#1}%

\newcommand*{\fancyrefitemlabelprefix}{item}
\Frefformat{plain}{\fancyrefitemlabelprefix}{Item\fancyrefdefaultspacing#1}%
\frefformat{plain}{\fancyrefitemlabelprefix}{item\fancyrefdefaultspacing#1}%

\newcommand*{\fancyrefproplabelprefix}{prop}
\Frefformat{plain}{\fancyrefproplabelprefix}{Proposition\fancyrefdefaultspacing#1}%
\frefformat{plain}{\fancyrefproplabelprefix}{proposition\fancyrefdefaultspacing#1}%

\newcommand*{\fancyrefphaselabelprefix}{phase}
\Frefformat{plain}{\fancyrefphaselabelprefix}{Phase\fancyrefdefaultspacing#1}%
\frefformat{plain}{\fancyrefphaselabelprefix}{phase\fancyrefdefaultspacing#1}%

\newcommand*{\fancyrefineqlabelprefix}{ineq}
\Frefformat{plain}{\fancyrefineqlabelprefix}{Inequality\fancyrefdefaultspacing(#1)}%
\frefformat{plain}{\fancyrefineqlabelprefix}{inequality\fancyrefdefaultspacing(#1)}%

\newcommand*{\fancyreftablelabelprefix}{table}
\Frefformat{plain}{\fancyreftablelabelprefix}{Table\fancyrefdefaultspacing#1}%
\frefformat{plain}{\fancyreftablelabelprefix}{table\fancyrefdefaultspacing#1}%

\newcommand*{\fancyrefappendixlabelprefix}{appendix}
\Frefformat{plain}{\fancyrefappendixlabelprefix}{Appendix\fancyrefdefaultspacing#1}%
\frefformat{plain}{\fancyrefappendixlabelprefix}{appendix\fancyrefdefaultspacing#1}%

\newcommand*{\fancyrefalgolabelprefix}{algo}
\Frefformat{plain}{\fancyrefalgolabelprefix}{Algorithm\fancyrefdefaultspacing#1}%
\frefformat{plain}{\fancyrefalgolabelprefix}{algorithm\fancyrefdefaultspacing#1}%

\newcommand*{\fancyrefsteplabelprefix}{step}
\Frefformat{plain}{\fancyrefsteplabelprefix}{Step\fancyrefdefaultspacing#1}%
\frefformat{plain}{\fancyrefsteplabelprefix}{step\fancyrefdefaultspacing#1}%

\usepackage{float}
\usepackage{enumerate}
\usepackage[usenames,dvipsnames,svgnames,table]{xcolor}
\colorlet{highlightnew}{black}

\usepackage{booktabs}
\usepackage{multirow}

\setlength{\parindent}{0pt}

\usepackage{hyperref} 

\hypersetup 
{
   unicode=true,          
   pdfstartview={XYZ null null 1},    
   pdftitle={Mobile vs.\ point guards},
   pdfauthor={Ervin Gy\H{o}ri, Tam\'as R\'obert Mezei},
   pdfkeywords={art gallery problem, orthogonal polygon, mobile guard, sliding cameras}, 
   colorlinks=false,        
}


\usepackage{tikz}
\usepackage{tkz-berge}
\usetikzlibrary{calc,patterns,turtle,fit,shapes}
\usepackage{pgfplots}
\pgfplotsset{compat=newest}
\def\dist{0.5}
\def\pgsize{3pt}
\def\mgsize{thick}
\tikzset{ every picture/.style={line cap=butt,turtle/distance=\dist,thick} } 

\ifxetex
	
\else
	
\fi

\newcommand{\vpat}{dotted}
\newcommand{\hpat}{dotted}

\colorlet{inside}{white!80!black}
\colorlet{insidelight}{white!90!black}

\newtheorem{theorem}{Theorem}
\newtheorem{lemma}[theorem]{Lemma}
\newtheorem{claim}[theorem]{Claim}
\newtheorem{corollary}[theorem]{Corollary}
\newtheorem{proposition}[theorem]{Proposition}
\theoremstyle{definition}
\newtheorem{definition}[theorem]{Definition}

\makeatletter
\newcommand{\neutralize}[1]{\expandafter\let\csname c@#1\endcsname\count@}
\makeatother

\newenvironment{thmbis}[1]
{%
	\neutralize{thm}\phantomsection%
	\begin{theorem}}
	{\end{theorem}}

\newcommand{\rela}{\leftrightarrow}
\newcommand{\covers}{\rightarrow}

\usepackage{dsfont}

\newcounter{case}[section]
\setcounter{case}{0}
\newcommand{\casemark}[5]{}
\makeatletter
\newcommand\case{\@startsection{case}{2}{\z@}%
	{12\p@ \@plus 6\p@ \@minus 3\p@}%
	{3\p@ \@plus 6\p@ \@minus 3\p@}%
	{\normalfont\normalsize\bfseries\boldmath Case }}

\newcounter{subcase}[case]
\setcounter{subcase}{0}
\newcommand{\subcasemark}[5]{}
\newcommand\subcase{\@startsection{subcase}{3}{\z@}%
	{12\p@ \@plus 6\p@ \@minus 3\p@}%
	{\p@}%
	{\normalfont\normalsize\bfseries\boldmath Case }}

\newcounter{subsubcase}[subcase]
\setcounter{subsubcase}{0}
\renewcommand{\thesubsubcase}%
{\arabic{case}.\arabic{subcase}.\arabic{subsubcase}}
\newcommand{\subsubcasemark}[5]{}
\newcommand\subsubcase{\@startsection{subsubcase}{4}{\z@}%
	{12\p@ \@plus 6\p@ \@minus 3\p@}%
	{\p@}%
	{\normalfont\normalsize\bfseries\boldmath Case }}

\newcounter{phase}[subsection]
\setcounter{phase}{0}
\newcommand{\phasemark}[5]{}
\newcommand\phase{\@startsection{phase}{3}{\z@}{3.25ex \@plus1ex \@minus.2ex}{-1em}{\normalfont\normalsize\bfseries\boldmath Phase }}
\makeatother

\title{\bfseries Mobile vs.\ point guards\footnote{This version supersedes its version published in \text{Discrete \& Computational Geometry} by covering a case missing from the original proof. Phases~2~and~3 have been extended, as previously we only considered cycles in $M'$, but not circuits. Moreover, $M'_V$ is now defined analogously to $M'_H$, ie.,\ certain vertical slices are split into two pieces. This required a slight adjustment of the computations in Phase~1 and \Fref{sec:estimating}.}}


\author{
	Ervin Gy\H ori$^{1,2,}$\footnote{Research  of the authors was supported by NKFIH grant K-116769.} \\
	\texttt{gyori.ervin@renyi.mta.hu}
	\and
	Tam\'as R\'obert Mezei$^{1,2,}$\addtocounter{footnote}{-1}\footnotemark\ \textsuperscript{,}\footnote{Corresponding author} \\
	\texttt{tamasrobert.mezei@gmail.com}
	}
\date{
	\small $^{1}$Alfréd Rényi Institute of Mathematics, Hungarian Academy of Sciences, Re\'altanoda~u.~13--15, 1053 Budapest, Hungary \\
	$^{2}$Central European University, Department of Mathematics and its Applications, N\'ador~u.~9, 1051 Budapest, Hungary \\
	\vspace{12pt}
	\normalsize \today
	\vspace{-12pt}
	}




\begin{document}

\maketitle

\begin{abstract}
	We study the problem of guarding orthogonal art galleries with horizontal mobile guards (alternatively, vertical) and point guards, using ``rectangular vision''.
	We prove a sharp bound on the minimum number of point guards required to cover the gallery in terms of the minimum number of vertical mobile guards and the minimum number of horizontal mobile guards required to cover the gallery.
	Furthermore, we show that the latter two numbers can be computed in linear time.
\end{abstract}

\section{Introduction}
The number of mobile and point guards required to control the interior of a general or an orthogonal polygon (without holes) has been well-studied as a function of the number of vertices of the polygon (in the introduction we assume the reader is familiar with the concept of mobile guards, point guards, etc., but all of these notion are defined precisely in \Fref{sec:defs}). Kahn, Klawe, and Kleitman in 1980~\cite{MR699771}, and a few years later Győri~\cite{MR844048}, and O'Rourke~\cite{ORourke} proved that $\lfloor n/4\rfloor$ point guards are sufficient and sometimes necessary to cover the interior of an orthogonal polygon of $n$ vertices. Aggarwal proved in his thesis~\cite{Ag} that any $n$-vertex orthogonal polygon can be covered by at most $\lfloor\frac{3n+4}{16}\rfloor$ mobile guards, and a strengthening of this result has been shown in~\cite{GyM2016}. These estimates are also shown to be sharp as extremal results. These theorems imply that --- from an extremal point of view --- only $4/3$ times as many point guards as mobile guards are needed. However, the ratio of these optima has not been studied.

\medskip

The main goal of this paper is to explore the ratio between the numbers of mobile guards and points guards required to control an orthogonal polygon  without holes. At first, this appears to be hopeless, as \Fref{fig:comb} shows a comb, which can be guarded by one mobile guard (whose patrol is shown by a dotted horizontal line). However, to cover the comb using point guards, one has to be placed for each tooth, so ten point guards are needed (marked by solid disks). Combs with arbitrarily high number of teeth clearly demonstrate that the minimum number of points guards required to control an orthogonal polygon cannot be bounded by the minimum size of a mobile guard system covering the comb.
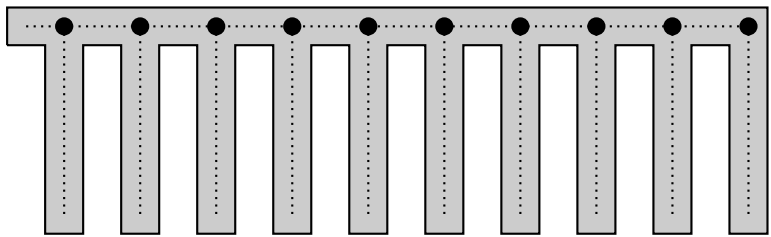
\begin{figure}[ht]
	\centering
	\begin{tikzpicture}
		\def\n {10}
		\def\l {5}

		\begin{scope}

			\filldraw [fill=inside][turtle={home,forward,right}]
			\foreach \i in {1,...,\n}
				{[turtle={fd,fd}]}

				[turtle=right,fd]
			\foreach \i in {1,...,\n}
				{
					\foreach \j in {1,...,\l}
						{[turtle=fd]}
						[turtle=rt,fd,rt]
					\foreach \j in {1,...,\l}
						{[turtle=fd]}
						[turtle=lt,fd,lt]
				};

		\end{scope}

		\begin{scope}[\mgsize]
			\draw[\hpat] (\dist*0.5,\dist*0.5) -- (\dist*2*\n-\dist*0.5,\dist*0.5);

			\foreach \x in {1,...,\n}
				{
					\draw[\vpat] (\dist*2*\x-\dist*0.5,\dist*0.5) -- (\dist*2*\x-\dist*0.5,-\dist*\l+\dist*0.5);
					\filldraw[black] (\dist*2*\x-\dist*0.5,\dist*0.5) circle ( \pgsize );
				}
		\end{scope}
	\end{tikzpicture}
	\caption{A comb with 10 teeth}\label{fig:comb}
\end{figure}

\medskip


{\color{highlightnew} In this paper, we study point and mobile guards that are equipped with rectangular vision, or $r$-vision for short: two points are visible to each other if their axis-parallel bounding rectangle is contained in the gallery. The results of \cite{MR844048,ORourke,GyM2016} show that the worst case bounds on the number of point- and mobile guard required to control an $n$-vertex orthogonal polygon do not increase if line of sight vision is restricted to $r$-vision.}

\medskip

{\color{highlightnew} Even though the point guard problem in orthogonal polygons is \textsc{NP}-hard for line of sight vision \cite{MR1330860}, the problem becomes polynomial for $r$-vision \cite{WM07}. The $\tilde{\mathcal{O}}(n^{17})$ time complexity is brought down by \citet{MR3598396} to a linear running time for thin orthogonal polygons. (An orthogonal polygon is thin if for any point $x$ in the gallery there exists a vertex $v$ on the orthogonal polygon to which everything seen by $x$ via $r$-vision is $r$-visible.) Furthermore, a linear time 3-approximation algorithm for the point guard problem with $r$-vision in orthogonal polygons has been developed by \citet{LWZ12}.}

\medskip

\textcolor{highlightnew}{\citet{KM11}) defined and studied the notion of ``horizontal sliding cameras'', which is a horizontal line segment $h\subset D$ inside the gallery, which sees a point $x\in D$ in the gallery if there is a point $y\in h$ on the line segment such that $\overline{xy}\perp h$. For a maximal horizontal line segment, the area covered by $h$ as a horizontal mobile $r$-guard (guard with rectangular vision) and as a horizontal sliding camera are identical up to a 0-measure subset (see \Fref{lemma:technical}).}

\medskip

The main result of our paper, \Fref{thm:main}, shows that a constant factor times the sum of the minimum sizes of a horizontal and a vertical mobile $r$-guard system can be used to estimate the minimum size of a point $r$-guard system. It is surprising to have such a result given that this ratio cannot be bounded if the region may contain holes.

\medskip

Take, for example, \Fref{fig:holes}, which generally contains $3k^2+4k+1$ square holes (in the figure $k=4$). The regions covered by line of sight vision by the black dots are pairwise disjoint, because the distance between adjacent square holes is less than half of the length of a square hole's side.
Therefore no two of the black dots can be covered by one point guard,
so at least $k^2$ point guards are necessary to control gallery. However, $2k+2$ horizontal mobile guards can easily cover the polygon, and the same holds for vertical mobile guards.
\begin{figure}
	\centering
	\begin{tikzpicture}[scale=0.3]
		\def \n {4}
		\filldraw[fill=inside] (-0.25,-0.25) rectangle (4*\n+2.75,4*\n+2.75);

		\foreach \x in {1,...,\n}
			{
				\foreach \y in {1,...,\n}
					{
						\filldraw[fill=white] (4*\x-2,4*\y-2) rectangle ++(-1.5,-1.5);
						\filldraw[fill=white] (4*\x-2,4*\y) rectangle ++(-1.5,-1.5);
						\filldraw[fill=white] (4*\x,4*\y-2) rectangle ++(-1.5,-1.5);

						\draw[black,fill=black] (4*\x-0.75,4*\y-0.75) circle (0.5ex);
					}
			}

		\foreach \x in {1,...,\n}
			{
				\filldraw[fill=white] (4*\x-2,4*\n+2) rectangle ++(-1.5,-1.5);
				\filldraw[fill=white] (4*\x,4*\n+2) rectangle ++(-1.5,-1.5);
				\filldraw[fill=white] (4*\n+2,4*\x-2) rectangle ++(-1.5,-1.5);
				\filldraw[fill=white] (4*\n+2,4*\x) rectangle ++(-1.5,-1.5);
			}
		\filldraw[fill=white] (4*\n+2,4*\n+2) rectangle ++(-1.5,-1.5);
	\end{tikzpicture}
	\caption{A polygon with holes --- unlimited ratio.}\label{fig:holes}
\end{figure}
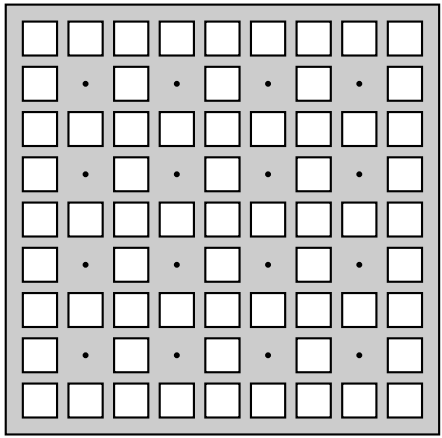

\medskip

In the last section of the paper, we show that a minimum size horizontal mobile $r$-guard system can be found in linear time (\Fref{thm:mgalg}). This improves the result in~\cite{KM11}, where it is shown that this problem can be solved in polynomial time.



\section{Definitions and preliminaries}\label{sec:defs}

Our universe for the study of art galleries is the plane $\mathbb{R}^2$. A \textbf{polygon} is defined by a cyclically ordered list of pairwise distinct vertices in the plane. It is drawn by joining each successive pair of vertices 
on the list by line segments, that only intersect in vertices of the polygon. The last requirement ensures that the closed domain bounded by the polygon is simply connected (to emphasize this, such polygons are often referred to as simple polygons in the literature). An \textbf{orthogonal polygon} is a polygon such that its line segments are alternatingly parallel to one of the axes of $\mathbb{R}^2$. Consequently, it is simply connected, and its angles are $\frac12\pi$ (convex) or $\frac32\pi$ (reflex).

\medskip

A \textbf{rectilinear domain} is a closed region of the plane ($\mathbb{R}^2$) whose boundary is an orthogonal polygon, i.e., a closed polygon without self-intersection, so that each segment is parallel to one of the two axes. A \textbf{rectilinear domain with holes} is a rectilinear domain with pairwise disjoint simple rectilinear domain holes. Its boundary is referred to as an \textbf{orthogonal polygon with holes}.

\medskip

The definitions imply that number of vertices of an orthogonal polygon (even with holes) is even. We denote the number of vertices of the polygon by $n(P)$, and define $n(D)=n(P)$, where $D$ is the domain bounded by $P$. Conversely, we write $P=\partial D$. We want to emphasize that in our problems not just the walls, but also the interior of the gallery must be covered. In the proofs of the theorems, therefore, we are working on rectilinear domains, not orthogonal polygons, even though one defines the other uniquely, and vice versa.

\medskip

Whenever results about objects that are allowed to have holes are mentioned, it is explicitly stated.

\medskip

To avoid confusion, we state that throughout this part, \textbf{vertices} and \textbf{sides} refer to subsets of an orthogonal polygon or a rectilinear domain; whereas any \textbf{graph} will be defined on a set of \textbf{nodes}, of which some pairs are joined by some \textbf{edges}. Given a graph $G$, the edge set $E(G)$ is a subset of the 2-element subsets of the vertices $V(G)$.

\medskip

\begin{table}
	\color{highlightnew}
	\centering
	\bgroup%
	\def\arraystretch{1.5}
	\small
	\begin{tabular}{ c  c  c }
		\textbf{Name}      & \textbf{Notation}  & \textbf{Meaning}                                                               \\ \midrule\midrule
		Orthogonal polygon & $P$                & A simple polygon made up of horiz.\ and vert.\ segments                        \\ \midrule
		Rectilinear domain & $D$                & A bounded region of $\mathbb{R}^2$ s.t.\ $\partial D$ is an orthogonal polygon \\ \midrule
		Side               &                    & A maximal horizontal or vertical segment of $P$ or $\partial D$                \\ \midrule
		Vertex             &                    & A non-empty intersection of two distinct sides                                 \\ \midrule
		Convex hull        & $\mathrm{Conv}(X)$ & The smallest convex set containing $X\subset\mathbb{R}^2$                      \\ \midrule
		Pixel              & $\cap e$           & The intersection of the elements of $e$                                        \\ \midrule
		Centroid           & $c(X)$             & The arithmetic mean position of $X\subset \mathbb{R}^2$                        \\ \bottomrule
	\end{tabular}
	\egroup%

	\bigskip

	\caption{Notation used in the paper}\label{table:notation}
\end{table}

Unless otherwise noted, we adhere to the same terminology in the subject of art galleries as O'Rourke~\cite{ORourke}.
However, for technical reasons, sometimes we need to assume extra conditions over what is traditionally assumed. In \Fref{lemma:technical}, we prove that we may, without restricting the problem, require the assumptions typeset in \emph{italics} in the following definitions.

\medskip

Two points $x,y$ in a domain $D$ have \textbf{line of sight vision}, \textbf{unrestricted vision}, or simply just \textbf{vision} of each other if the line segment spanned by $x$ and $y$ is contained in $D$.

\medskip

A \textbf{point guard} in an art gallery $D$ is a point $y\in D$. It has vision of a point $x\in D$ if the line segment $\overline{xy}$ is a subset of $D$. The term ``stationary guard'' refers to the same meaning, and is used mostly in contrast with ``mobile guards''.

\medskip

A \textbf{mobile guard} is a line segment $L\subset D$. A point $x\in D$ is seen by the guard if there is a point $y\in L$ which has vision of $x$. Intuitively, a mobile guard is a point guard patrolling the line segment $L$.

\medskip

The points \textbf{covered by a guard} is just another name for the set of points of $D$ that are seen by the guard. A \textbf{system of guards} is a set of guards in $D$ which cover $D$, i.e., for any point $x\in D$, there is a guard in the system covering $x$.

\medskip

Two points $x,y$ in a rectilinear domain $D$ have \textbf{\boldmath $r$-vision} of each other (alternatively, $x$ is $r$-visible from $y$) if there exists an axis-aligned \emph{non-degenerate} 
rectangle in $D$ which contains both $x$ and $y$. This vision is natural to use in orthogonal art galleries instead of the more powerful line of sight vision. For example, $r$-vision is invariant on the transformation depicted on \Fref{fig:mgcond1}.

\medskip

A \textbf{point \boldmath $r$-guard} is a point $y\in D$, \emph{such that the two maximal axis-parallel line segments in $D$ containing $y$ do not intersect vertices of $D$}. A set of point guards \hbox{\textbf{\boldmath $r$-cover}} $D$ if any point $x\in D$ is $r$-visible from a member of the set. Such a set is called a \textbf{point \boldmath $r$-guard system}.

\medskip

A \textbf{vertical mobile \boldmath $r$-guard} is a vertical line segment in $D$, \emph{such that the maximal line segment in $D$ containing it does not intersect vertices of~$D$}. \textbf{Horizontal} mobile guards are defined analogously. A \textbf{mobile \boldmath $r$-guard} is either a vertical or a horizontal mobile $r$-guard. A mobile $r$-guard \hbox{\textbf{\boldmath  $r$-covers}} any point $x\in D$ for which there exists a point $y$ on its line segment such that $x$ is $r$-visible from $y$.

\begin{lemma}\label{lemma:technical}
	Any rectilinear domain $D$ can be transformed into another rectilinear domain $D'$ so that the point guard $r$-cover, and the vertical/horizontal mobile guard $r$-cover problems in $D$, without the restrictions typeset in italics, are equivalent to the respective problems, as per our definitions (i.e., with the restrictions), in $D'$.
\end{lemma}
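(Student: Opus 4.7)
The plan is to construct $D'$ from $D$ by a small perturbation of $\partial D$ that renders the italicized restrictions automatic for generic guards in $D'$. To each side $s$ of $\partial D$ I assign a distinct small positive real $\epsilon_s$, and shift $s$ perpendicular to itself by $\epsilon_s$, with signs chosen so that the resulting orthogonal polygon $\partial D'$ retains the combinatorial type of $\partial D$ (same cyclic sequence of convex and reflex vertices, same adjacencies). With the $\epsilon_s$ rationally independent and sufficiently small, two vertices of $D'$ can share an $x$-coordinate only by lying on a common perturbed vertical side, and analogously for $y$-coordinates. Hence, for any point $y$ in the open and dense subset of $D'$ whose coordinates avoid those of every vertex of $D'$, the maximal axis-parallel segments of $D'$ through $y$ end in the interiors of sides of $\partial D'$ and avoid all vertices. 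The italicized restrictions thus hold automatically for such generic guards, and the non-degeneracy condition of $r$-vision also becomes automatic, since any axis-parallel segment of positive length in $D'$ can be thickened into a non-degenerate rectangle.

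I would then argue the equivalence of the unrestricted problems in $D$ with the restricted problems in $D'$, separately for each of the three guard types (point, horizontal mobile, vertical mobile $r$-guards). The claim reduces to showing that a guard cover in one problem yields a cover of the same size in the other.

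In one direction, a restricted cover in $D'$ is transferred to an unrestricted cover in $D$ of the same size: each guard in $D'$ corresponds, by reversing the perturbation, to a nearby point or segment in $D$, and a non-degenerate rectangle witnessing $r$-vision in $D'$ corresponds to a (possibly degenerate) rectangle in $D$. Coverage is preserved by a continuity argument, using that unrestricted $r$-vision is a closed relation; any point in $D$ not already covered by a guard's rectangle can be obtained as a limit of such covered points, and the limit rectangle still lies in $D$.

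In the other direction, an unrestricted cover in $D$ is transferred to a restricted cover in $D'$ by perturbing each guard to a generic position, where the italicized restrictions are automatic. The subtlety is that coverage must be preserved under this perturbation. The key case is when an unrestricted guard in $D$ lies on an axis-parallel line through a vertex of $D$: its visibility extends asymmetrically around that vertex, so the perturbation direction must be chosen based on the local geometry. The main obstacle is this case analysis -- verifying that a safe perturbation direction exists for every guard. This follows from the observation that the $r$-visibility region of a guard at a generic position in $D'$ is determined by the combinatorial type of $\partial D'$ near the guard, and hence is stable under small perturbations that keep the guard in the same cell of the arrangement of axis-parallel lines through vertices of $D'$.
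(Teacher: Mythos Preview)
Your approach is genuinely different from the paper's, and as written it has a real gap. The paper does not perturb sides into general position; instead it locates each maximal axis-parallel segment $L\subset D$ that is touched by the exterior of $D$ on \emph{both} sides, and thickens $L$ by a Minkowski sum, so that $D\subset D'$. The guard correspondence is then the identity map, and the only thing to check is that in $D'$ every axis-parallel segment sits inside a non-degenerate rectangle, so degenerate $r$-vision upgrades to non-degenerate and any guard can be nudged off vertex-lines without changing what it sees.

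The gap in your proposal is that the sign of the perturbation is not pinned down, and the wrong sign breaks the equivalence. Take $D=[0,2]\times[0,1]\cup[1,3]\times[1,2]$. In the unrestricted problem a single point guard at $(1.5,1)$ $r$-covers $D$. Now perturb the two horizontal sides at height $1$ \emph{inward} (toward the interior of $D$), say $[0,1]\times\{1\}$ down to $y=1-\epsilon_2$ and $[2,3]\times\{1\}$ up to $y=1+\epsilon_1$; this preserves the combinatorial type for small $\epsilon_i$. In the resulting $D'$, any guard that $r$-sees the lower-left corner must have $y$-coordinate at most $1-\epsilon_2$, while any guard that $r$-sees the upper-right corner must have $y$-coordinate at least $1+\epsilon_1$; hence no single restricted point guard covers $D'$, and the minimum cover sizes differ. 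Your ``reversing the perturbation'' correspondence is never made precise enough to rule this out, and the two directions of the equivalence in fact pull in opposite ways: for $D'\to D$ you want $D'\subset D$ (inward), while for $D\to D'$ you want $D\subset D'$ (outward). If you commit to outward perturbation everywhere you are essentially reproducing the paper's thickening (applied to all sides rather than just the bottlenecks), and you then still owe the argument that the extra area $D'\setminus D$ is automatically covered and that guards in $D'\setminus D$ can be pulled back into $D$.
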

\begin{proof}
	\begin{figure}[ht]
		\centering
		\begin{tikzpicture}
			\def\dist{0.5}
			\begin{scope}[yscale=1.5,shift={(0*\dist,0.33*\dist)},rotate=-90]
				\fill[color=inside]  (0,7*\dist) rectangle (2*\dist,8*\dist) rectangle (\dist,4.5*\dist) rectangle (0,6*\dist) rectangle (\dist,2*\dist) rectangle (2*\dist,3*\dist) rectangle (\dist,-1.5*\dist) rectangle (0,0) rectangle (\dist,-3*\dist);

				\draw (0,0) -- ++(\dist,0) -- ++(0,2*\dist) -- ++(-\dist,0);
				\draw (2*\dist,3*\dist) -- ++(-1*\dist,0) -- ++(0,1.5*\dist) -- ++(\dist,0);
				\draw (0,6*\dist) -- ++(\dist,0) -- ++(0,\dist) -- ++(-\dist,0);
				\draw (0,8*\dist) -- ++(2*\dist,0);
				\draw (0,-3*\dist) -- ++(\dist,0) -- ++(0,1.5*\dist) -- ++(\dist,0);
			\end{scope}

			\draw [very thick,decorate,decoration={coil,aspect=0},->] (8.5*\dist,-\dist) -- (10.5*\dist,-\dist);

			\begin{scope}[shift={(14*\dist,0.5*\dist)}, rotate=-90]
				\fill[color=inside]  (0,7*\dist) rectangle (3*\dist,8*\dist) rectangle (\dist,4.5*\dist) rectangle (0,6*\dist);
				\fill[color=inside] (0,5*\dist) rectangle (2*\dist,2*\dist);
				\fill[color=inside] (3*\dist,3*\dist) rectangle (\dist,-1.5*\dist);
				\fill[color=inside] (0,0) rectangle (2*\dist,-3*\dist);

				\draw (0,0) -- ++(\dist,0) -- ++(0,2*\dist) -- ++(-\dist,0);
				\draw (3*\dist,3*\dist) -- ++(-1*\dist,0) -- ++(0,1.5*\dist) -- ++(\dist,0);
				\draw (0,6*\dist) -- ++(\dist,0) -- ++(0,\dist) -- ++(-\dist,0);
				\draw (0,8*\dist) -- ++(3*\dist,0);
				\draw (0,-3*\dist) -- ++(2*\dist,0) -- ++(0,1.5*\dist) -- ++(\dist,0);
			\end{scope}
		\end{tikzpicture}
		\caption{After this transformation, those mobile guards whose maximal containing line segment does not intersect vertices of the rectilinear domain, are just as powerful as mobile guards that are not restricted in such a way.}\label{fig:mgcond1}
	\end{figure}
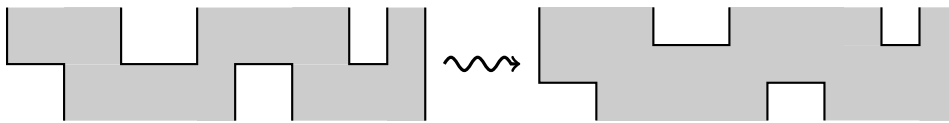
	Let $\varepsilon$ be the minimal distance between any two horizontal line segments of $\partial D$. The transformation depicted in \Fref{fig:mgcond1} in $D$ takes a maximal horizontal line segment $L$ in $D$ which is touched from both above and below by the exterior of $D$, and maps $D$ to \[D'=D\bigcup \left(L+\overline{(0,-\varepsilon/4)(0,\varepsilon/4)}\right),\]
	where addition is taken in the Minkowski sense.
	There is a trivial correspondence between the point and mobile guards of $D$ and $D'$ such that taking this correspondence guard-wise transforms a guarding system of $D$ (guards without the restrictions) into a guarding system of $D'$ (guards with the restrictions), and vice versa.

	\medskip

	After performing this operation at every vertical and horizontal occurrence, we get a rectilinear domain $D''$, in which any vertical or horizontal line segment is contained in a non-degenerate rectangle in $D''$. Therefore, degenerate vision between any two points implies non-degenerate vision between the pair. Furthermore, the line segment of any mobile guard can be translated slightly along its normal (at least in one direction) while staying inside $D''$, and this clearly does not change the set of points $r$-covered by the guard. Similarly, we can perturb the position of a point guard without changing the set of points of $D''$ it $r$-covers.
\end{proof}

\begin{theorem}\label{thm:main}
	Given a rectilinear domain $D$ let $m_V$ be the minimum size of a vertical mobile $r$-guard system of $D$, let $m_H$ be defined analogously for horizontal mobile $r$-guard systems, and finally let $p$ be the minimum size of a point $r$-guard system of $D$. Then
	\[ \left\lfloor\frac{4(m_V+m_H-1)}{3}\right\rfloor\ge p. \]
\end{theorem}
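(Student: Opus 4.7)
I would construct a point $r$-guard system $\mathcal{P}$ of $D$ from fixed minimum horizontal and vertical mobile $r$-guard systems $M_H$ and $M_V$, following the three-phase strategy indicated in the footnote to the paper's title.

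In Phase 1, I would associate each guard in $M_H$ (resp.\ $M_V$) with the maximal horizontal (resp.\ vertical) slice of $D$ it $r$-covers, and split certain slices into two pieces when their visibility regions have non-trivial staircases on both sides. The result is a pair of refined systems $M'_H$ and $M'_V$ whose sum $|M'_H|+|M'_V|$ exceeds $m_H+m_V$ by a controlled number of splits; crucially, each refined slice has a geometrically simpler visibility region (one-sided in a suitable sense), which is what makes the later accounting work out. According to the footnote this is the step whose cost must be adjusted so that $M'_V$ is treated analogously to $M'_H$.

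In Phase 2, I would build a bipartite auxiliary (multi)graph $M'$ with nodes $M'_H\cup M'_V$ and an edge joining $h\in M'_H$ to $v\in M'_V$ for each pixel of $D$ where a single point $r$-guard could cover adjoining portions of both slices. Within $M'$, I would identify as many alternating even cycles as possible, and then (the harder case) alternating circuits—closed walks that may repeat a slice. Each cycle or circuit corresponds, via the simple connectivity of $D$, to a closed chain of slice intersections; the minimality of $M_H$ and $M_V$ is used to guarantee that the graph is rich enough to contain these structures.

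In Phase 3, I would convert each cycle, circuit, and residual tree structure of $M'$ into point $r$-guards placed at the corresponding intersection pixels. The savings come from cycles: a cycle of length $2k$ requires roughly $\lceil 2k/3\rceil$ to $k$ point guards, whereas naively covering each slice independently would cost up to $2k$. Trading off the cycle-savings against the Phase 1 splits and a one-per-component spanning cost yields the bound $|\mathcal{P}|\le\lfloor 4(m_H+m_V-1)/3\rfloor$. The main obstacle is Phase 2, and specifically the treatment of circuits that visit a slice more than once: simple cycles admit a clean interpretation as closed curves in the simply connected $D$, but circuits require a refined case analysis (exactly the gap filled by the present version). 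A secondary subtlety is verifying that the chosen point guards cover the \emph{interior} of $D$ and not merely witness the slice intersections; this reduces to showing that, for each refined slice, the union of the point guards assigned to it $r$-covers its entire visibility region, which is where the one-sidedness obtained in Phase 1 pays off.
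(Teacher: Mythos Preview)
Your plan misidentifies both the object on which the ``cycles and circuits'' live and the mechanism that produces the $4/3$ constant, so as stated it would not go through.

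In the paper, the Phases referenced in the footnote are not the overall proof structure; they are sub-steps inside the main case where $M=G[M_H\cup M_V]$ is $2$-connected (the proof first splits by the connectivity of $M$, reducing the connected and disconnected cases to the $2$-connected one). More importantly, the cycles and circuits are \emph{not} cycles in the bipartite intersection graph $M'$ you describe. The paper first classifies the edges of $M'$ (pixels) as convex, side, reflex, or internal relative to the region $D_M=\bigcup_{C_4\text{ in }M'}\mathrm{Conv}(C_4)$, and then builds an auxiliary graph $X$ whose \emph{nodes} are the reflex and side edges and whose edges record a specific dominance relation $\leftrightarrow$. The crucial structural fact is that $X$ has maximum degree $2$, i.e.\ it is a disjoint union of paths and cycles; this is what lets one pick roughly every third node of $X$ and still have a ``hyperguard''. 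Combined with the combinatorial identities $r'=c'-4$ and $|V(M')|=c'+\tfrac12 s'$, this yields the bound $c'+\tfrac{r'+2s'}{3}=\tfrac{4|V(M')|-4}{3}$. Your accounting (``a cycle of length $2k$ in $M'$ needs about $\lceil 2k/3\rceil$ guards, versus $2k$ naively'') is not the source of the saving and does not obviously hold: there is no reason a $2k$-cycle in $M'$ can be covered by $\lceil 2k/3\rceil$ point guards.

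Two further gaps: (i) you invoke minimality of $M_H,M_V$ to make $M'$ ``rich enough'', but the paper's result (Theorem~\ref{thm:main}$'$) holds for \emph{any} dominating pair, and the argument never uses minimality; (ii) the step you flag as secondary---that the chosen guards see all of $D$, not just the slice intersections---is in fact the heart of the argument. It rests on the chordal bipartiteness of the pixelation graph (so any $e_0\in E(G)$ is $r$-visible from some edge of $M'$), on the dominance relation $\to$ (so an edge dropped from the guard set is still $r$-covered by its replacement), and on a delicate third ``neck'' condition in the definition of a hyperguard. None of this is captured by the one-sidedness heuristic you describe; the Phase~1 splitting in the paper is designed to eliminate pixels with two diagonally opposite corners on $\partial D_M$, which is a different (and more specific) obstruction than ``staircases on both sides''.
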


\medskip

Observe, that the magical $4:3$ ratio highlighted by \citet[Section~3.1]{ORourke} appears between the minimum number of (horizontal plus vertical) mobile and point guards required to control the gallery, even though the theorem does not use the number of vertices of the gallery as a parameter. 
Before moving onto the proof of \Fref{thm:main}, we discuss the aspects of its sharpness.

\medskip

For $m_V+m_H\le 6$, sharpness of the theorem is shown by the examples in \Fref{fig:sharpness}.
The polygon in \Fref{fig:sharpness6} can be easily generalized to one satisfying $m_V+m_H=3k+1$ and $p=4k$. For $m_V+m_H=3k+2$ and $m_V+m_H=3k+3$, we can attach 1 or 2 plus signs to the previously constructed polygons, as shown in \Fref{fig:sharpness4} and~\ref{fig:sharpness5}.
Thus \Fref{thm:main} is sharp for any fixed value of $m_V+m_H$.

\medskip

By stringing together a number of copies of the polygons in \Fref{fig:sharpness1} and~\ref{fig:sharpness3} in an L-shape (\Fref{fig:sharpness6} is a special case of this), we can construct rectilinear domains for any $(m_H,m_V)$ pair satisfying $m_V\le 2(m_H-1)$ and $m_H\le 2(m_V-1)$, such that the polygon satisfies \Fref{thm:main} sharply.
The analysis in \Fref{sec:translating} immediately yields that if $m_V=1$ or $m_H=1$, then $m_V+m_H-1$ is an upper bound for the minimum size of a point guard system (see~\Fref{prop:star}), whose sharpness is shown by combs (\Fref{fig:comb}).

%

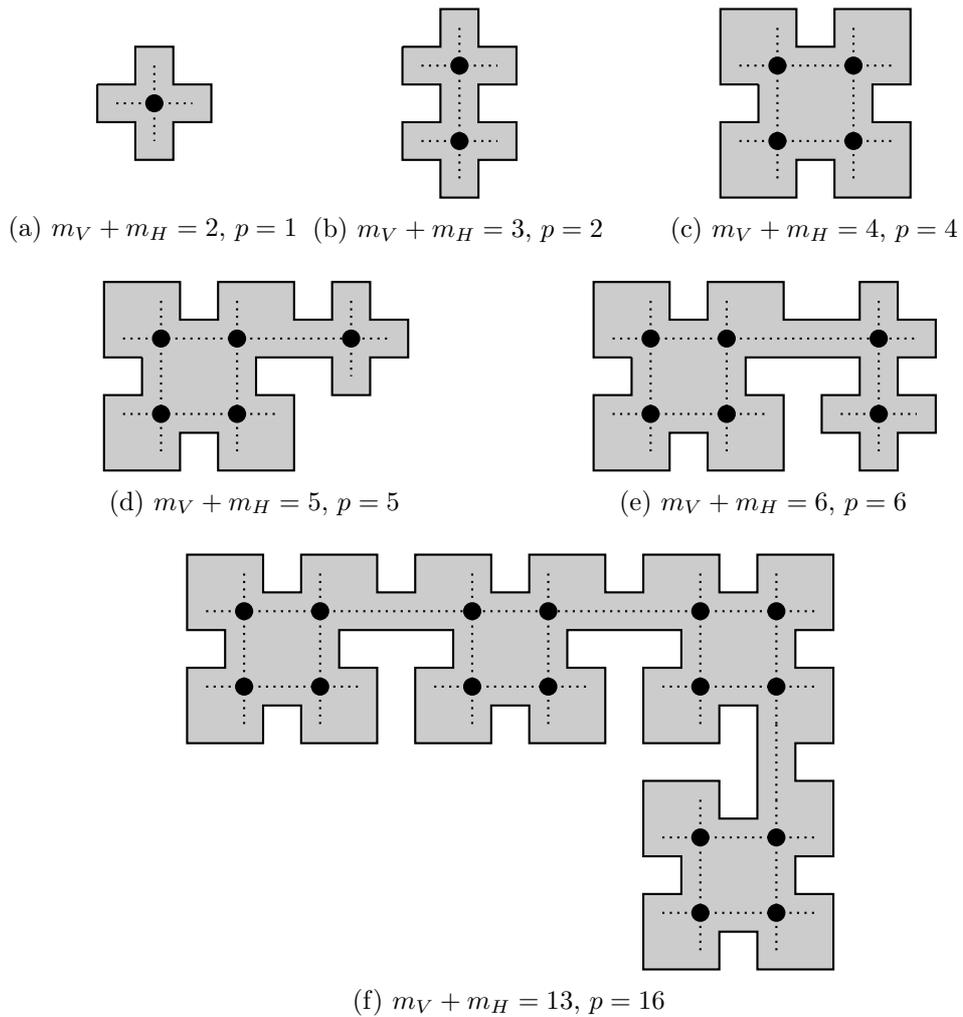
\begin{figure}
	\centering
	\begin{subfigure}{.3\textwidth}
		\centering
		\begin{tikzpicture}
			\begin{scope}

				\fill[white] (\dist,\dist) rectangle ++(\dist,\dist);
				\fill[white] (\dist,\dist*-3) rectangle ++(\dist,\dist);
				\filldraw [fill=inside][turtle=home]
				\foreach \i in {1,...,4}
					{
						[turtle={right,forward,left,forward,right,forward}]
					};
			\end{scope}

			\begin{scope}[\mgsize]
				\draw[\hpat] (\dist*0.5,\dist*-0.5) -- (\dist*2.5,\dist*-0.5);
				\draw[\vpat] (\dist*1.5,\dist*0.5) -- (\dist*1.5,\dist*-1.5);
				\filldraw[black] (\dist*1.5,\dist*-0.5) circle ( \pgsize );
			\end{scope}
		\end{tikzpicture}
		\caption{\small $m_V+m_H=2$, $p=1$}\label{fig:sharpness1}
	\end{subfigure}%
	\begin{subfigure}{.3\textwidth}
		\centering
		\begin{tikzpicture}
			\begin{scope}

				\filldraw [fill=inside][turtle=home]
				\foreach \i in {1,2}
					{
						[turtle={right,forward,left,forward,right,forward}]
					}
					[turtle={right,forward,left,left}]
				\foreach \i in {1,2,3}
					{
						[turtle={right,forward,left,forward,right,forward}]
					}
					[turtle={right,forward,left,left}]
					[turtle={right,forward,left,forward,right,forward}]
				;
			\end{scope}

			\begin{scope}[\mgsize]
				\draw[\hpat] (\dist*0.5,\dist*-0.5) -- (\dist*2.5,\dist*-0.5);
				\draw[\hpat] (\dist*0.5,\dist*-2.5) -- (\dist*2.5,\dist*-2.5);
				\draw[\vpat] (\dist*1.5,\dist*0.5) -- (\dist*1.5,\dist*-3.5);
				\filldraw[black] (\dist*1.5,\dist*-0.5) circle ( \pgsize );
				\filldraw[black] (\dist*1.5,\dist*-2.5) circle ( \pgsize );

			\end{scope}
		\end{tikzpicture}
		\caption{\small $m_V+m_H=3$, $p=2$}\label{fig:sharpness2}
	\end{subfigure}%
	\begin{subfigure}{.40\textwidth}
		\centering
		\begin{tikzpicture}
			\begin{scope}

			\end{scope}

			\begin{scope} 
				\filldraw [fill=inside][turtle=home]
				\foreach \i in {1,2,3,4}
					{
						[turtle={left,forward,right,forward,forward,right,forward,forward,right,forward,left,forward}]
					};

			\end{scope}

			\begin{scope}[\mgsize]

				\foreach \x in {\dist*0.5,\dist*2.5}
				\draw[\vpat] (\x,\dist*1.5) -- (\x,\dist*-2.5);

				\foreach \y in {\dist*0.5,\dist*-1.5}
				\draw[\hpat] (\dist*-0.5,\y) -- (\dist*3.5,\y);

				\foreach \x in {\dist*0.5,\dist*2.5}
				\foreach \y in {\dist*0.5,\dist*-1.5}
				\filldraw[black] (\x,\y) circle ( \pgsize );

			\end{scope}
		\end{tikzpicture}
		\caption{\small $m_V+m_H=4$, $p=4$}\label{fig:sharpness3}
	\end{subfigure}

	\bigskip

	\begin{subfigure}{.5\textwidth}
		\centering
		\begin{tikzpicture}
			\begin{scope}

			\end{scope}

			\begin{scope} 
				\filldraw [fill=inside][turtle=home]			[turtle={left,forward,right,forward,forward,right,forward,forward,right,forward,left,forward}]
				[turtle={left,forward,right,forward,forward,right,forward,left}]

				\foreach \i in {1,...,3}
					{
						[turtle={forward,left,forward,right,forward,right}]
					}
					[turtle={forward,left,forward,forward,left,forward}]

				\foreach \i in {1,2}
					{
						[turtle={left,forward,right,forward,forward,right,forward,forward,right,forward,left,forward}]
					};

			\end{scope}

			\begin{scope}[\mgsize]

				\foreach \x in {\dist*0.5,\dist*2.5}
				\draw[\vpat] (\x,\dist*1.5) -- (\x,\dist*-2.5);

				\draw[\hpat] (\dist*-0.5,\dist*0.5) -- (\dist*6.5,\dist*0.5);
				\draw[\vpat] (\dist*5.5,\dist*1.5) -- (\dist*5.5,\dist*-0.5);
				\filldraw[black] (\dist*5.5,\dist*0.5) circle ( \pgsize );

				\foreach \y in {\dist*-1.5}
				\draw[\hpat] (\dist*-0.5,\y) -- (\dist*3.5,\y);

				\foreach \x in {\dist*0.5,\dist*2.5}
				\foreach \y in {\dist*0.5,\dist*-1.5}
				\filldraw[black] (\x,\y) circle ( \pgsize );

			\end{scope}

		\end{tikzpicture}
		\caption{\small $m_V+m_H=5$, $p=5$}\label{fig:sharpness4}
	\end{subfigure}%
	\begin{subfigure}{.5\textwidth}
		\centering
		\begin{tikzpicture}
			\begin{scope}

			\end{scope}

			\begin{scope} 
				\filldraw [fill=inside][turtle=home]			[turtle={left,forward,right,forward,forward,right,forward,forward,right,forward,left,forward}]
				[turtle={left,forward,right,forward,forward,right,forward,left,forward}]

				\foreach \i in {1,2}
					{
						[turtle={forward,left,forward,right,forward,right}]
					}
					[turtle={forward,left,left}]
				\foreach \i in {1,2,3}
					{
						[turtle={right,forward,left,forward,right,forward}]
					}
					[turtle={right,forward,left,left}]
					[turtle={right,forward,left,forward,forward,forward,left,forward}]

				\foreach \i in {1,2}
					{
						[turtle={left,forward,right,forward,forward,right,forward,forward,right,forward,left,forward}]
					};

			\end{scope}

			\begin{scope}[\mgsize]

				\foreach \x in {\dist*0.5,\dist*2.5}
				\draw[\vpat] (\x,\dist*1.5) -- (\x,\dist*-2.5);

				\draw[\hpat] (\dist*-0.5,\dist*0.5) -- (\dist*7.5,\dist*0.5);
				\draw[\hpat] (\dist*5.5,\dist*-1.5) -- (\dist*7.5,\dist*-1.5);
				\draw[\vpat] (\dist*6.5,\dist*1.5) -- (\dist*6.5,\dist*-2.5);
				\filldraw[black] (\dist*6.5,\dist*0.5) circle ( \pgsize );
				\filldraw[black] (\dist*6.5,\dist*-1.5) circle ( \pgsize );

				\foreach \y in {\dist*-1.5}
				\draw[\hpat] (\dist*-0.5,\y) -- (\dist*3.5,\y);

				\foreach \x in {\dist*0.5,\dist*2.5}
				\foreach \y in {\dist*0.5,\dist*-1.5}
				\filldraw[black] (\x,\y) circle ( \pgsize );

			\end{scope}
		\end{tikzpicture}
		\caption{\small $m_V+m_H=6$, $p=6$}\label{fig:sharpness5}
	\end{subfigure}

	\bigskip

	\begin{subfigure}{\textwidth}
		\centering
		\begin{tikzpicture}

			\def\n{2}
			\begin{scope}

				\filldraw [fill=inside][turtle=home]

				\foreach \i in {1,...,5} 
					{
						[turtle={fd,rt,fd,fd,rt,fd,lt,fd,lt}]
					}

					[turtle={fd,rt,fd}]

				\foreach \i in {1,...,3}
					{
						[turtle={fd,rt,fd,fd,rt,fd,lt,fd,lt}]
					}

				\foreach \i in {1,2,3}
					{
						[turtle={fd,rt,fd,fd,rt,fd,fd,rt,fd,lt,fd,lt}]
					}

					[turtle={fd,fd,fd,lt,fd,lt,fd,rt,fd,fd,rt,fd,fd,rt,fd,lt,fd,lt,fd}]

				\foreach \i in {1,2}
					{
						[turtle=fd]

						\foreach \j in {1,2}
							{
								[turtle={fd,lt,fd,lt,fd,rt,fd,fd,rt,fd,fd,rt}]
							}

							[turtle={fd,lt,fd,lt,fd}]

					}

					[turtle={rt,fd}]

				;
			\end{scope}

			\begin{scope}[\mgsize]

				\draw[\hpat] (\dist*0.5,\dist*-0.5) -- (\dist*16.5,\dist*-0.5);

				\foreach \z in {0,...,\n}
					{
						\begin{scope}[shift={(\z*\dist*6,0)}]
							\foreach \x in {\dist*1.5,\dist*3.5}
							\draw[\vpat] (\x,\dist*0.5) -- (\x,\dist*-3.5);

							\draw[\hpat] (\dist*0.5,\dist*-2.5) -- (\dist*4.5,\dist*-2.5);

							\foreach \x in {\dist*1.5,\dist*3.5}
							\foreach \y in {\dist*-0.5,\dist*-2.5}
							\filldraw[black] (\x,\y) circle ( \pgsize );
						\end{scope}
					}

				\begin{scope}[shift={(\n*\dist*6,-\dist*6)}]
					\foreach \x in {\dist*1.5,\dist*3.5}
					\draw[\vpat] (\x,\dist*0.5) -- (\x,\dist*-3.5);

					\draw[\vpat] (\dist*3.5,\dist*2.5) -- (\dist*3.5,\dist*0.5);

					\foreach \y in {\dist*-0.5,\dist*-2.5}
					\draw[\hpat] (\dist*0.5,\y) -- (\dist*4.5,\y);

					\foreach \x in {\dist*1.5,\dist*3.5}
					\foreach \y in {\dist*-0.5,\dist*-2.5}
					\filldraw[black] (\x,\y) circle ( \pgsize );
				\end{scope}
			\end{scope}
		\end{tikzpicture}
		\caption{\small $m_V+m_H=13$, $p=16$}\label{fig:sharpness6}
	\end{subfigure}

	\caption{\small Vertical dotted lines: a minimum size vertical mobile guard system; \\ Horizontal dotted lines: a minimum size horizontal mobile guard system; \\ Solid disks: a minimum size point guard system.}\label{fig:sharpness}
\end{figure}

\section{Translating the problem into the language of graphs}\label{sec:translating}

For graph theoretical notation and theorems used in this chapter (say, the block decomposition of graphs), the reader is referred to~\cite{Diestel}.

\begin{definition}[Chordal bipartite or bichordal graph,~\cite{GolumbicGoss78}]
	A graph $G$ is chordal bipartite iff any cycle $C$ of $\ge 6$ vertices of $G$ has a chord (that is $E(G[C])\supsetneqq E(C)$).
\end{definition}

Let $S_V$ be the set of internally disjoint rectangles we obtain by cutting vertically at each reflex vertex of a rectilinear domain $D$. Similarly, let $S_H$ be defined analogously for horizontal cuts of $D$. We may refer to the elements of these sets as {\bfseries vertical and horizontal slices}, respectively.

\medskip

The horizontal $R$-tree $T_H$ of $D$ is equal to
\[ T_H=\left(S_H,\Big\{\{h_1,h_2\}\subseteq S_H\ :\ h_1\neq h_2,\ h_1\cap h_2\neq\emptyset\Big\}\right), \]
i.e., $T_H$ is the intersection graph of the horizontal slices of $D$. The graph $T_H$ is indeed a tree as its connectedness is trivial, and since any cut creates two internally disjoint rectilinear domains, $T_H$ is also cycle-free. We can think of $T_H$ as a sort of dual of the planar graph determined by the union of $\partial D$ and its horizontal cuts. Similarly, $T_V$ is the intersection graph of the vertical slices of $D$.

\medskip

Let $G$ be the intersection graph of $S_H$ and $S_V$, i.e.,
\[ G=\left(S_H\cup S_V,\left\{\{h,v\}\ :\ h\in S_H,\ v\in S_V,\ \mathrm{int}(h)\cap \mathrm{int}(v)\neq\emptyset\right\}\right).\]
In other words, a horizontal and a vertical slice are joined by an edge iff their interiors intersect; see \Fref{fig:pixelation}. We may also refer to $G$ as the \textbf{pixelation graph} of $D$.
Clearly, the {\bfseries set of  pixels} $\{\cap e\ |\ e\in E(G)\}$ is a cover of $D$. Let us define $c(e)$ as the centroid of $\cap e$ (the pixel determined by $e$).
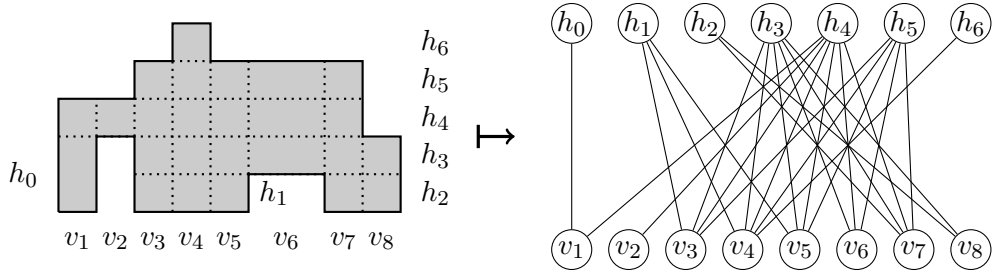
\begin{figure}[h]
	\centering
	\begin{tikzpicture}
		\begin{scope}
			\filldraw[fill=inside][turtle=home]
			[turtle=fd,fd,fd,rt,fd,fd,lt,fd,rt,fd,lt,fd,rt,fd,rt,fd,lt,fd,fd,fd,fd,rt,fd,fd,lt,fd,rt,fd,fd,rt,fd,fd,rt,fd,lt,fd,fd,lt,fd,rt,fd,fd,fd,rt,fd,fd,lt,fd,lt,fd,fd,rt,fd];

			\draw[dotted] (3*\dist,4*\dist) -- ++(\dist,0);
			\draw[dotted] (2*\dist,3*\dist) -- ++(6*\dist,0);
			\draw[dotted] (0*\dist,2*\dist) -- ++(8*\dist,0);
			\draw[dotted] (2*\dist,1*\dist) -- ++(7*\dist,0);

			\draw[dotted] (1*\dist,2*\dist) -- ++(0,1*\dist);
			\draw[dotted] (2*\dist,2*\dist) -- ++(0,1*\dist);
			\draw[dotted] (3*\dist,0*\dist) -- ++(0,4*\dist);
			\draw[dotted] (4*\dist,0*\dist) -- ++(0,4*\dist);
			\draw[dotted] (5*\dist,1*\dist) -- ++(0,3*\dist);
			\draw[dotted] (7*\dist,1*\dist) -- ++(0,3*\dist);
			\draw[dotted] (8*\dist,0*\dist) -- ++(0,2*\dist);

			\foreach \x in {1,...,5}
			\draw (\dist*\x-0.5*\dist,-0.25*\dist) node[anchor=north] {$v_\x$};

			\draw (\dist*6.5-0.5*\dist,-0.25*\dist) node[anchor=north] {$v_6$};

			\foreach \x in {7,8}
			\draw (\dist*\x+0.5*\dist,-0.25*\dist) node[anchor=north] {$v_\x$};

			\foreach \y in {2,...,6}
			\draw (9.25*\dist,\dist*\y-1.5*\dist) node[anchor=west] {$h_\y$};

			\draw (-0.25*\dist,1*\dist) node[anchor=east] {$h_0$};
			\draw (5*\dist,0.5*\dist) node[anchor=west] {$h_1$};
		\end{scope}

		\draw [very thick,|->] (11*\dist,2*\dist) -- ++(1*\dist,0);

		\begin{scope}[shift={(12*\dist,0)}]]
			\foreach \x in {0,...,6}
			\node[draw,thin,circle,inner sep=0pt,minimum size=15pt](h_\x) at (\x*7*1.5/6*\dist+1.5*\dist,5*\dist) {$h_\x$};

			\foreach \x in {1,...,8}
			\node[draw,thin,circle,inner sep=0pt,minimum size=15pt](v_\x) at
			(\x*1.5*\dist,-1*\dist) {$v_\x$};

			\foreach \v/\h in {1/0,3/1,4/1,5/1,1/4,2/4,3/3,3/4,3/5,4/3,4/4,4/5,4/6,5/3,5/4,5/5,6/3,6/4,6/5,7/2,7/3,7/4,7/5,8/2,8/3}
			\draw[thin] (v_\v) -- (h_\h); 
		\end{scope}
	\end{tikzpicture}
	\caption{A rectilinear domain and its associated pixelation graph}\label{fig:pixelation}
\end{figure}

%

\begin{lemma}\label{lemma:chordal}
	$G$ is a connected chordal bipartite graph.
\end{lemma}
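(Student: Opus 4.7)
Bipartiteness is immediate, since every edge connects a horizontal slice to a vertical one. For connectedness, I would take any two slices and a continuous arc between them inside $D$ (which exists because $D$ is connected); the arc traverses a sequence of pixels whose consecutive pairs share a side contained in a common slice, which gives a walk in $G$.

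\textbf{Chordal bipartite---preparation.} Chordal bipartiteness is the main task, which I would prove by contradiction: suppose $C = h_1 v_1 h_2 v_2 \cdots h_k v_k$ is a shortest induced cycle in $G$ with $k \ge 3$. A useful preliminary observation: since each horizontal slice has its left/right (vertical) edges on $\partial D$ (because no vertical cut was used to form it), and symmetrically each vertical slice has its top/bottom (horizontal) edges on $\partial D$, a short argument (at a $y$ common to $h$ and $v$, $v$ cannot cross $h$'s vertical boundary segments without exiting $D$) shows that for every pixel $h \cap v$ the $x$-range of $v$ lies inside the $x$-range of $h$, and the $y$-range of $h$ lies inside the $y$-range of $v$. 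Consequently $h \cap v$ equals $(x\text{-range of }v) \times (y\text{-range of }h)$, with centroid $c(h\cap v) = (X_v, Y_h)$ at the midpoints of those ranges.

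\textbf{A simple closed curve bounding a disk.} I would join the $2k$ centroids associated to the edges of $C$ by straight axis-parallel segments inside the common slice, producing a closed rectilinear curve $\gamma \subseteq D$ consisting of $k$ vertical segments (one at $x = X_{v_j}$ inside each $v_j$) and $k$ horizontal segments (one at $y = Y_{h_i}$ inside each $h_i$). An interior crossing of a horizontal segment (inside $h_i$) with a vertical segment (inside $v_j$) would put $(X_{v_j}, Y_{h_i})$ strictly in the interiors of both rectangles $h_i$ and $v_j$, so that $h_i \cap v_j$ would be a non-degenerate pixel and $h_i \sim v_j$ a chord of $C$---contradicting chord-freeness. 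Hence $\gamma$ is a simple closed curve, and by the simple connectedness of $D$ it bounds a closed topological disk $R \subseteq D$. Since $\gamma$ has $2k \ge 6$ corners, $R$ cannot be covered by the $2k$ cycle pixels alone and must contain some pixel $\tilde p = \tilde h \cap \tilde v$ whose centroid lies strictly in the interior of $R$.

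\textbf{The main obstacle.} The delicate step is to extract a chord of $C$ from $\tilde p$. Because the vertical edges of $\tilde h$ lie on $\partial D$ and $\partial D \cap R = \emptyset$, the slice $\tilde h$ cannot be contained in $R$ and must therefore cross $\gamma$; a horizontal rectangle can cross $\gamma$ only at a vertical $\gamma$-segment, which lies inside some cycle vertical slice $v_j$, yielding an edge $\tilde h \sim v_j$ in $G$. If $\tilde h$ coincides with a cycle vertex $h_i$, picking $\tilde p$ deep enough inside $R$ (away from the two cycle pixels $h_i \cap v_{i-1}, h_i \cap v_i$ lying on $\gamma$) forces $v_j \notin \{v_{i-1}, v_i\}$ and gives the desired chord directly. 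If $\tilde h$ is not in $C$, I would combine an extremal choice of $\tilde p$ inside $R$ with the minimality of $C$ to construct, via the new edges $\tilde h \sim v_j$, a strictly shorter induced cycle in $G$, again yielding a contradiction.
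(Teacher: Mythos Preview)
Your setup—bipartiteness, connectedness, joining pixel centroids to form the orthogonal closed curve $\gamma$, and reading off a chord from any self-intersection of $\gamma$—is exactly what the paper does. The divergence is in how you handle the case where $\gamma$ is simple, and there your argument has a genuine gap.

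The paper's argument for simple $\gamma$ is short and never leaves the cycle slices. Since $\gamma$ is a simple orthogonal polygon on $2k\ge 6$ vertices, it has a reflex vertex, say $c(v_1\cap h_1)$. Extend from this point along the vertical center line $x=X_{v_1}$ into the interior of $R$; because the top and bottom sides of $v_1$ lie on $\partial D$, this line stays inside $v_1\subseteq D$ all the way across $R$ and must exit through $\gamma$ again. The exit cannot be on a vertical $\gamma$-segment (those lie on center lines of other, internally disjoint vertical slices), so it is on a horizontal segment inside some cycle vertex $h_2$, and $\{v_1,h_2\}$ is a chord of $C$.

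Your route instead picks an interior pixel $\tilde p=\tilde h\cap\tilde v$ and traces $\tilde h$ out to $\gamma$. When $\tilde h\notin C$ you only obtain edges of the form $\tilde h\, v_j$ with $v_j\in C$; these are \emph{not} chords of $C$. The sentence ``combine an extremal choice of $\tilde p$ with the minimality of $C$ to construct a strictly shorter induced cycle'' is not a proof: a cycle through the new vertex $\tilde h$ need not be induced, and even if it is, it may well have length $4$, which contradicts nothing. You have not shown how to extract a chord of the original $C$ from this data. (A secondary issue: the existence of a pixel whose \emph{centroid} lies in $\operatorname{int}(R)$ is asserted but not justified; a pixel meeting $\operatorname{int}(R)$ can have its centroid outside $R$.)

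The fix is precisely the paper's reflex-vertex idea: do not pick an arbitrary interior pixel, but start on the center line of a cycle slice at a reflex corner of $\gamma$ and follow it across $R$. That keeps you on a vertex of $C$ throughout and delivers a chord directly.
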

\begin{proof}
	Connectedness of $D$ immediately yields that $G$ is connected too. Suppose $C$ is a cycle of $\ge 6$ vertices in $G$. For each node of the cycle $C$, connect the centroids of the pixels of its two incident edges with a line segment. This way we get a (not necessarily simple) orthogonal polygon $P$ in $D$.

	\medskip

	If $P$ is self-intersecting, then the vertices which are represented by the two intersecting line segments are intersecting. This clearly corresponds to a chord of $C$ in $G$.

	\medskip

	If $P$ is simple, then the number of its vertices is $|V(C)|$, thus one of them is a reflex vertex, say $c(v_1\cap h_1)$ is one. As $P$ lives in $D$, its interior is a subset of $D$ as well (here we use that $D$ is simply connected). The simpleness of $P$ also implies that the vertical line segment intersecting $c(v_1\cap h_1)$, after entering the interior of $P$ at $c(v_1\cap h_1)$, intersects $P$ at least once more when it emerges, say at $c(v_1\cap h_2)$. As this is not an intersection of the line segments corresponding to two vertices of $D$, the edge $\{v_1,h_2\}$ is a chord of~$C$.
\end{proof}

It is worth mentioning that even if $D$ is a rectilinear domain with rectilinear hole(s), $G$ may still be chordal bipartite. Take, for example, ${[0,3]}^2\setminus {(1,2)}^2$; the graph associated to it has only one cycle, which is of length 4.

\medskip

We will use the following technical claim to translate $r$-vision of points of $D$ into relations in $G$.

\begin{claim}\label{claim:rectvision}
	Let $e_1,e_2\in E(G)$, where $e_1=\{v_1,h_1\}$, $e_2=\{v_2,h_2\}$, $v_1,v_2\in S_V$, and $h_1,h_2\in S_H$. The points $p_1\in \mathrm{int}(\cap e_1)$ and $p_2\in \mathrm{int}(\cap e_2)$ have $r$-vision of each other in $D$ iff $e_1\cap e_2\neq\emptyset$ or $e_1\cup e_2$ induces a 4-cycle in $G$.
\end{claim}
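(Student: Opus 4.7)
The plan is to prove both implications. Write $p_i = (x_i, y_i)$, $v_j = [a_j, b_j] \times [c_{v_j}, d_{v_j}]$, and $h_j = [a_{h_j}, b_{h_j}] \times [c_j, d_j]$. The standing observation, used throughout, is that since each $p_i$ lies in the interior of its pixel $\cap e_i$, the coordinates $x_i, y_i$ are non-critical: they coincide with no $x$-coordinate of a vertical side or cut, nor any $y$-coordinate of a horizontal side or cut. Consequently a vertical segment in $D$ issued from $p_i$ stays inside the slice $v_i$ (there is no vertical cut to leave through), and symmetrically for horizontal segments.

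For $(\Rightarrow)$, suppose $p_1$ and $p_2$ have $r$-vision via a non-degenerate rectangle $R \subseteq D$, so that the axis-parallel bounding box $\tilde R$ of $p_1, p_2$ lies in $R$. The corner $q = (x_1, y_2) \in \tilde R \subseteq D$ is reached from $p_1$ by a vertical segment, hence $q \in v_1$; and from $p_2$ by a horizontal segment, hence $q \in h_2$. Non-criticality of $x_1, y_2$ then forces $q \in \mathrm{int}(v_1) \cap \mathrm{int}(h_2)$, so $\{v_1, h_2\} \in E(G)$. The symmetric argument for $q' = (x_2, y_1)$ yields $\{v_2, h_1\} \in E(G)$. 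If the four vertices are distinct, bipartiteness of $G$ forces the induced subgraph on $\{v_1, v_2, h_1, h_2\}$ to be $K_{2,2}$, i.e., a 4-cycle; otherwise $v_1 = v_2$ or $h_1 = h_2$, giving $e_1 \cap e_2 \ne \emptyset$.

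For $(\Leftarrow)$, the case $e_1 \cap e_2 \ne \emptyset$ (say $v_1 = v_2 = v$) is immediate: the bounding box of $p_1, p_2$ lies in the rectangle $v \subseteq D$, thickened inside $\mathrm{int}(v)$ if degenerate. For the 4-cycle case, I rely on the following key preliminary: whenever $\mathrm{int}(v_i \cap h_j) \ne \emptyset$, then $[a_i, b_i] \subseteq [a_{h_j}, b_{h_j}]$, and symmetrically $[c_j, d_j] \subseteq [c_{v_i}, d_{v_i}]$. Indeed, at any non-critical $y$ in the interior of $(v_i \cap h_j)$'s $y$-range, the contiguous interval $[a_i, b_i] \times \{y\} \subseteq v_i \subseteq D$ lies in a single maximal interval of $D \cap (\mathbb{R} \times \{y\})$, i.e., inside one horizontal slice; since this interval overlaps $h_j$ and horizontal slices have disjoint interiors, that slice must be $h_j$. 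Taking WLOG $b_1 \le a_2$ and $d_1 \le c_2$, applying this to all four edges of the 4-cycle shows that $D$ contains the four ``frame'' rectangles $[a_1, b_2] \times [c_1, d_1]$, $[a_1, b_2] \times [c_2, d_2]$, $[a_1, b_1] \times [c_1, d_2]$, and $[a_2, b_2] \times [c_1, d_2]$, whose union is a rectangular annulus surrounding the central region $C = [b_1, a_2] \times [d_1, c_2]$.

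The main obstacle is to deduce $C \subseteq D$. My plan is to argue by contradiction using simple-connectedness: if some interior point of $C$ lay in $\mathbb{R}^2 \setminus D$, its connected component in the complement would be trapped inside the rectangular annulus and therefore bounded; but $D$ being simply connected (its boundary is a single simple polygon) forces $\mathbb{R}^2 \setminus D$ to be connected and hence a single unbounded component---contradiction. Once $C \subseteq D$, the frames and $C$ together form the rectangle $[a_1, b_2] \times [c_1, d_2] \subseteq D$, which contains the bounding box $[x_1, x_2] \times [y_1, y_2]$ of $p_1, p_2$ (non-degenerate since $x_1 < b_1 \le a_2 < x_2$ and $y_1 < d_1 \le c_2 < y_2$), providing the required witness of $r$-vision.
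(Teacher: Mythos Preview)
Your proof is correct and follows essentially the same approach as the paper's: both directions use the corners of the bounding rectangle to produce the cross edges $\{v_1,h_2\},\{v_2,h_1\}$, and the backward direction builds a rectangular frame from the four pixels and invokes simple-connectedness of $D$ to fill the hole. The only difference is cosmetic: where you establish the coordinate containments $[a_i,b_i]\subseteq[a_{h_j},b_{h_j}]$ explicitly, the paper gets the frame pieces in one line via the convexity of each slice, writing $\mathrm{Conv}\big((v_1\cap h_1)\cup(v_1\cap h_2)\big)\subseteq v_1\subseteq D$ directly.
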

\begin{proof}
	If $v_1\in e_1\cap e_2$, then $p_1,p_2\in v_1$, therefore $p_1$ and $p_2$ have $r$-vision of each other. If $h_1\in e_1\cap e_2$, the same holds. If $\{v_1,h_1,v_2,h_2\}$ induces a 4-cycle, then
	\[ \mathrm{Conv}((v_1\cap h_1)\cup (v_1\cap h_2))\subseteq v_1\subseteq D \]
	by $v_1$'s convexity.	Moreover,
	\begin{align*}
		B=   & \mathrm{Conv}((v_1\cap h_1)\cup (v_1\cap h_2))\cup \mathrm{Conv}((v_1\cap h_2)\cup (v_2\cap h_2))\cup \\
		\cup & \mathrm{Conv}((v_2\cap h_2)\cup (v_2\cap h_1))\cup \mathrm{Conv}((v_2\cap h_1)\cup (v_1\cap h_1))
	\end{align*}
	is contained in $D$. Since $D$ is simply connected, we have $\mathrm{Conv}(B)\subseteq D$, which is a rectangle containing both $p_1$ and $p_2$.

	\medskip

	In the other direction, suppose $e_1\cap e_2=\emptyset$. If $R$ is an axis-aligned rectangle which contains both $p_1$ and $p_2$, then $R$ clearly intersects the interiors of each element of $e_1\cup e_2$, which implies that $\mathrm{int}(v_2)\cap \mathrm{int}(h_1)\neq\emptyset$ and $\mathrm{int}(v_1)\cap \mathrm{int}(h_2)\neq\emptyset$. Thus $e_1\cup e_2$ induces a cycle in $G$.
\end{proof}
This easily implies the following claim.
\begin{claim}\label{claim:rectvision2}
	Two points $p_1,p_2\in D$ have $r$-vision of each other iff $\exists e_1,e_2\in E(G)$ such that $p_1\in \cap e_1$, $p_2\in \cap e_2$, and either $e_1\cap e_2\neq\emptyset$ or $e_1\cup e_2$ induces a 4-cycle in $G$.
\end{claim}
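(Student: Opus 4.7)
The plan is to reduce \Fref{claim:rectvision2} to \Fref{claim:rectvision}: for the forward direction by perturbing $p_1, p_2$ slightly into pixel interiors, and for the reverse direction by observing that the rectangle construction in the earlier proof never actually used that the points lay in pixel interiors.

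\medskip

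\emph{Forward direction.} Suppose $p_1, p_2$ are mutually $r$-visible via a non-degenerate axis-aligned rectangle $R \subseteq D$. Since the pixels of $G$ cover $D$, each $p_i$ lies in some pixel $\cap e_i$, which is itself a closed non-degenerate rectangle (being the intersection of two slices whose interiors meet, by the definition of an edge in $G$). The plan is to pick witnesses $p_i' \in \mathrm{int}(R) \cap \mathrm{int}(\cap e_i)$ close to $p_i$; since $p_1', p_2' \in R$, they $r$-see each other, and \Fref{claim:rectvision} applied to $p_1', p_2'$ yields that either $e_1 \cap e_2 \neq \emptyset$ or $e_1 \cup e_2$ induces a $4$-cycle in $G$. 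Since the pixels are closed, the original points $p_i \in \cap e_i$ inherit the same relation.

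\medskip

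\emph{Reverse direction.} If $e_1 \cap e_2$ contains a slice $s$, then $p_1, p_2 \in s$, and $s$ is a rectangle contained in $D$, so they have $r$-vision. Otherwise $e_1 \cup e_2$ induces a $4$-cycle, and the first half of the proof of \Fref{claim:rectvision} already produces an axis-aligned rectangle $\mathrm{Conv}(B) \subseteq D$ containing all four pixels, and hence both $p_1$ and $p_2$; that construction never used that the points lay in pixel interiors, so it transfers verbatim.

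\medskip

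The main obstacle is the existence of the perturbed witnesses $p_i'$ when $p_i$ sits on a cut line that separates several pixels (the most delicate instance being when $p_i$ is a vertex of $D$ belonging to multiple pixels simultaneously). One must choose a pixel $e_i$ containing $p_i$ whose interior meets $R$ non-trivially near $p_i$. Since $R$ is non-degenerate and $p_i \in R$, at least one axis-aligned coordinate orthant around $p_i$ contains a small open box inside $R$; the pixels locally tile a neighborhood of $p_i$, so some pixel through $p_i$ has a two-dimensional overlap with that orthant, and thus provides a valid choice of $e_i$ (and symmetrically for $e_2$).
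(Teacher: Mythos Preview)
Your argument is correct and follows exactly the route the paper intends: the paper's own ``proof'' of \Fref{claim:rectvision2} is the single sentence ``This easily implies the following claim,'' i.e., a bare reduction to \Fref{claim:rectvision}, and you have simply supplied the details of that reduction. Your handling of the one genuine subtlety---choosing, among the (at most four) pixels containing a boundary point $p_i$, one whose interior meets $\mathrm{int}(R)$---is the right idea and goes through because only finitely many pixels touch $p_i$, they cover a neighbourhood of $p_i$ in $D$, and $R$ has positive area there.
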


These claims motivate the following definition.

\begin{definition}[$r$-vision of edges]\label{def:rvisionedge}
	For any $e_1,e_2\in E(G)$ we say that $e_1$ and $e_2$ have $r$-vision of each other iff $e_1\cap e_2\neq\emptyset$ or there exists a $C_4$ in $G$ which contains both $e_1$ and $e_2$.
\end{definition}

Let $Z\subseteq E(G)$ be such that for any $e_0\in E(G)$ there exists an $e_1\in Z$ so that $e_1$ has $r$-vision of $e_0$.
According to \Fref{claim:rectvision2}, if we choose a point from $\mathrm{int}(\cap e_1)$ for each $e_1\in Z$, then we get a point $r$-guard system of $D$.

\medskip

Observe that any vertical mobile $r$-guard is contained in $\mathrm{int}(v)$ for some $v\in S_V$ (except $\le 2$ points of the patrol).
Extending the line segment the mobile guard patrols increases the area that it covers, therefore we may assume that this line segment intersects each element of $\{\mathrm{int}(\cap e)\ |\ v\in e\in E(G)\}$, which only depends on some $v\in S_V$.
Using \Fref{claim:rectvision2}, we conclude that the set which such a mobile guard covers with $r$-vision is exactly $\cup\{ h\in S_H\ |\ \{h,v\}\in E(G) \}$. The analogous statement holds for horizontal mobile guards as well.

\medskip

Thus, a set of vertical mobile guards of $D$ can be represented by a set $M_V\subseteq S_V$. Clearly, $M_V$ covers $D$ if and only if
\[ D = \bigcup_{v\in M_V}\left(\bigcup N_G(v)\right),\text{ which holds iff } S_H=\bigcup_{v\in M_V} N_G(v), \]
or in other words, $M_V$ dominates each element of $S_H$ in $G$. Similarly, a horizontal mobile guard system has a representative set $M_H\subseteq S_H$, which dominates $S_V$ in $G$. Equivalently, $M_H\cup M_V$ is a totally dominating set of $G$, i.e., a subset of $V(G)$ that dominates every node of $G$ (even the nodes of $M_H\cup M_V$).

\medskip

\textcolor{highlightnew}{\citet{MR2310594} studies weakly cooperative mobile guards in grids. A grid is the connected union of vertical and horizontal segments in the plane, and a mobile guard is a maximal horizontal or vertical line segment of the grid. A set of mobile guards is called weakly cooperative, if the segment of each mobile guard intersects another guard's segment. An important observation of \cite{MR2310594} is that the weakly cooperative mobile guard set problem in grids reduces to the total dominating set problem in the intersection graph of the grid. In \Fref{sec:algo}, we discuss their complexity results as well.}

\medskip

The observations about $G$ can be extended to a mixed set of vertical and horizontal mobile $r$-guards, which is represented by a set of vertices of $S\subseteq V(G)$. The set of guards is a covering system of guards of $D$ if and only if every node $V(G)\setminus S$ has neighbor in $S$, i.e., $S$ is a dominating set in $G$.
\Fref{table:translation} is the dictionary that lists the main notions of the original problem and their corresponding phrasing in the pixelation graph.

\begin{table}
	\centering
	\bgroup%
	\def\arraystretch{1.5}
	\small
	\begin{tabular}{ c  c }
		\textbf{Orthogonal polygon}      & \textbf{Pixelation graph}                               \\ \midrule\midrule
		Mobile guard                     & Vertex                                                  \\ \toprule
		Point guard                      & Edge                                                    \\ \midrule
		Simply connected                 & Chordal bipartite ($\Rightarrow$, but $\not\Leftarrow$) \\ \midrule
		$r$-vision of two points         & $e_1\cap e_2\neq\emptyset$ or $G[e_1\cup e_2]\cong C_4$ \\ \midrule
		Horiz.~mobile guard cover        & $M_H\subseteq S_H$ dominating $S_V$                     \\ \midrule
		Covering system of mobile guards & Dominating set                                          \\ \bottomrule
	\end{tabular}
	\egroup%

	\bigskip

	\caption{Translating the orthogonal art gallery problem to the pixelation graph}\label{table:translation}
\end{table}

\medskip

As promised, the following claim has a very short proof using the definitions and claims of this section.

\begin{proposition}\label{prop:star}
	If $m_V=1$ or $m_H=1$, then $p\le m_V+m_H-1$.
\end{proposition}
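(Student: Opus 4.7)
The plan is to translate the hypothesis directly into the language of the pixelation graph $G$ and then exhibit an explicit set of $m_V+m_H-1$ edges that $r$-see every edge of $G$; by the discussion following \Fref{claim:rectvision2}, placing one point in the interior of each of the associated pixels gives a point $r$-guard system of $D$.

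Without loss of generality assume $m_H=1$, so that $m_V+m_H-1=m_V$. The hypothesis $m_H=1$ means there is a single horizontal slice $h^*\in S_H$ that dominates all of $S_V$ in $G$, i.e., $\{v,h^*\}\in E(G)$ for every $v\in S_V$. Let $M_V\subseteq S_V$ be a minimum vertical mobile $r$-guard system, of size $m_V$; by the translation dictionary $M_V$ dominates $S_H$ in $G$. Define
\[ E^{*}=\bigl\{\{v,h^{*}\}:v\in M_V\bigr\}\subseteq E(G), \]
a set of $m_V$ edges. I pick one point from $\mathrm{int}(\cap e)$ for each $e\in E^*$; this yields $m_V=m_V+m_H-1$ candidate point $r$-guards, and by \Fref{claim:rectvision2} it suffices to verify that every edge of $G$ is $r$-seen (in the sense of \Fref{def:rvisionedge}) by some edge of $E^*$.

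The verification splits into two cases. If $e=\{v,h^{*}\}$ shares the horizontal slice $h^*$ with $E^{*}$, then $e$ intersects every element of $E^{*}$ at $h^{*}$, so the vision condition is immediate. Otherwise $e=\{v,h\}$ with $h\ne h^{*}$; since $M_V$ dominates $S_H$, pick $v_0\in M_V$ with $\{v_0,h\}\in E(G)$ and set $e'=\{v_0,h^{*}\}\in E^{*}$. If $v_0=v$ then $e\cap e'=\{v\}$; otherwise the four edges $\{v,h\},\{v_0,h\},\{v_0,h^{*}\},\{v,h^{*}\}$ (all in $E(G)$, using that $h^{*}$ dominates $S_V$) form a $C_4$ in $G$ containing both $e$ and $e'$. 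Either way $e$ and $e'$ have $r$-vision in the sense of \Fref{def:rvisionedge}.

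There is no real obstacle here; the proof is essentially a bookkeeping argument once the pixelation-graph dictionary is set up. The only thing to be a bit careful about is that the $C_4$ in the second subcase is guaranteed to exist in $G$ precisely because the hypothesis $m_H=1$ forces $h^{*}$ to be adjacent to every vertical slice, which is exactly what makes the fourth edge $\{v,h^{*}\}$ automatic.
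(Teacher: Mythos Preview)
Your proof is correct and follows essentially the same approach as the paper's: both take the edges between the single slice $h^{*}$ (the center of the star $G[M_H\cup M_V]$) and the slices of $M_V$, and verify that any edge $e_0=\{v_0,h_0\}$ of $G$ is $r$-seen by one of these, either by sharing a vertex or via the $C_4$ on $\{v_0,h_0,v_1,h^{*}\}$. The paper's version is slightly terser (it just observes that $G[M_H\cup M_V]$ is a star and appeals to bichordality implicitly), while you spell out the $C_4$ explicitly using the fact that $h^{*}$ is adjacent to every vertical slice; the content is identical.
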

\begin{proof}
	Let $Z$ be the set of edges of $G$ induced by $M_H\cup M_V$. Clearly, $G[M_H\cup M_V]$ is a star, thus $|Z|=|M_H|+|M_V|-1$.

	\medskip

	We claim that $Z$ covers $E(G)$. There exist two slices, $h_1\in M_H$ and $v_1\in M_V$, which are joined by an edge to $v_0$ and $h_0$, respectively.
	Since $G[M_H\cup M_V]$ is a star, $\{v_1,h_1\}\in Z$. This edge has $r$-vision of $e_0$, as either $\{v_1,h_1\}$ intersects $e_0$, or $\{v_0,h_0,v_1,h_1\}$ induces a $C_4$ in $Z$.
\end{proof}

\medskip

Finally, we can state \Fref{thm:main} in a stronger form, conveniently via graph theoretic concepts.

\begin{thmbis}{thm:main}\label{thm:mainprime}
	Let $A_V$ be a set of internally disjoint axis-parallel rectangles of a rectilinear domain $D$, called the vertical slices. Similarly, let $A_H$ be another set with the same property, whose elements we call the horizontal slices.
	Also, suppose that for any $v\in A_V$, its top and bottom sides are a subset of $\partial D$, and for any $h\in A_H$, its left and right sides are a subset of $\partial D$.
	Furthermore, suppose that their intersection graph \[ G=\left(A_H\cup A_V,\big\{\{h,v\}\subseteq A_V\cup A_H\ :\ \mathrm{int}(v)\cap\mathrm{int}(h)\neq\emptyset\big\}\right) \]
	is connected.

	\medskip

	If $M_V\subseteq A_V$ dominates $A_H$ in $G$, and $M_H\subseteq A_H$ dominates $A_V$ in $G$, then there exists a set of edges $Z\subseteq E(G)$ such that any element of $E(G)$ is $r$-visible from some element of $Z$, and
	\[ |Z|\le \frac43\cdot(|M_V|+|M_H|-1). \]
\end{thmbis}

\medskip

Now we are ready to prove the main theorem of this paper.

\section{Proof of \texorpdfstring{\Fref{thm:mainprime}}{Theorem~\ref{thm:main}'}}
The set $A_H$ can be extended to a set $S_H$ of internally disjoint axis-parallel rectangles which completely cover $D$, and whose left and right sides are subsets of $\partial D$. Similarly, extend $A_V$ to a complete partition $S_V$ of $D$. By \Fref{lemma:chordal}, $G$ is a subgraph induced by $A_H\cup A_V$ in a chordal bipartite graph, thus $G$ is chordal bipartite as well. Let $M=G[M_V\cup M_H]$ be the subgraph induced by the dominating sets. Notice, that the bichordality of $G$ is inherited by $M$.

\medskip

{\color{highlightnew} Given a pair of subsets $A_H\subseteq S_H$ and $A_V\subseteq S_V$ such that their intersection graph $G$ is connected, join two slices $h_1,h_2\in A_H$ by an edge if there exists a $v\in A_V$ such that $\{h_1,v\},\{h_2,v\}\in E(G)$ and there does not exist $h_3\in A_H$ which is between $h_1$ and $h_2$ in the path induced by $N_G(v)$ in $T_H$. We call the constructed graph the $R$-tree on $A_H$. The definition for $A_V$ goes analogously.

	\begin{claim}\label{claim:treeprop}
		For any $h_1,h_2\in A_H$ the following statements hold:
		\begin{itemize}
			\item $N_G(h_1)$ is the vertex set of a path in the $R$-tree on $A_V$, or in other words $N_G(h_1)$ induces a path in the $R$-tree on $A_V$.
			\item $N_G(h_1)\bigcap N_G(h_2)$ is either empty, contains exactly one slice, or induces a path in the $R$-tree on $A_V$.
			\item If $G$ is 2-connected and $h_1$ is a neighbor of $h_2$ in the $R$-tree on $A_H$, then
			      \[\left|N_G(h_1)\bigcap N_G(h_2)\right|\ge 2.\]
		\end{itemize}
	\end{claim}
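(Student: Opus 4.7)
The plan is to prove the three items in succession, leveraging a common geometric fact: since every $u \in A_V$ has its top and bottom sides on $\partial D$, the domain $D$ coincides with $u$ throughout $u$'s $x$-range. Consequently, any axis-parallel rectangle $h' \subseteq D$ whose $x$-range contains $u$'s $x$-range must have its $y$-range contained in $u$'s $y$-range, and hence $h' \cap u$ has nonempty interior.

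For the first item, I order $N_G(h_1) \subseteq A_V$ along $h_1$ as $v_1, \ldots, v_m$. Consecutive pairs $v_i, v_{i+1}$ are adjacent in the $R$-tree on $A_V$ with $h_1$ as witness, since no element of $A_V$ lies horizontally between them. If a non-consecutive pair $v_i, v_j$ were joined via a different $h' \in A_H$, the horizontally intermediate $v_l \in A_V$ would also lie in $N_G(h')$ by the lever above, contradicting the defining condition of the $R$-tree edge via $h'$. The second item follows by the same reasoning applied to $V^* = N_G(h_1) \cap N_G(h_2)$: any $u \in A_V$ horizontally between consecutive elements of $V^*$ would intersect both $h_1$ and $h_2$ and hence already lie in $V^*$, so consecutive elements of $V^*$ remain consecutive in $N_G(h_1)$ and are adjacent in the $R$-tree via $h_1$; non-consecutive pairs are handled exactly as in item one.

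For the third item I argue by contradiction: suppose $N_G(h_1) \cap N_G(h_2) = \{v\}$. Since $G$ is $2$-connected, $G - v$ is connected; let $\pi = h_1 - u_1 - g_1 - u_2 - \cdots - u_k - h_2$ be a shortest $h_1$-$h_2$ path in $G - v$, necessarily of length $2k \geq 4$. Combining $\pi$ with the edges $\{v, h_1\}$ and $\{v, h_2\}$ produces a cycle $C$ of length $\geq 6$; by bichordality, $C$ has a chord. Minimality of $\pi$ forbids chords entirely within $\pi$, so every chord is of the form $\{v, g_j\}$. Iterating on the two sub-cycles produced by such a chord (closing the recursion when a sub-cycle has length $4$) yields that $v$ is adjacent to every $g_i$ along $\pi$.

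Because $h_1, h_2$ are consecutive in $N_G(v)$'s path in $T_H$, each $g_i$ lies outside the interval bounded by $h_1, h_2$ in $v$'s vertical stack. Assume WLOG that $h_1$ sits above $h_2$ in this stack. If some consecutive pair $g_i, g_{i+1}$ lies on opposite sides of the interval, take $u := u_{i+1}$; if all $g_i$'s lie above $h_1$, take $u := u_k$, adjacent on $\pi$ to $g_{k-1}$ (above $h_1$) and to $h_2$ (below $h_1$); if all lie below $h_2$, take $u := u_1$, adjacent to $h_1$ and $g_1$. In each case the two $\pi$-neighbors of $u$ in $A_H$ have $y$-ranges straddling $h_1$ or $h_2$, so the maximal vertical extent of $u \in A_V$ forces its $y$-range to contain the $y$-range of that $h \in \{h_1, h_2\}$ for which membership in $N_G(h)$ was not yet known. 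One more application of the geometric lever yields $u \in N_G(h_1) \cap N_G(h_2) \setminus \{v\}$, contradicting the assumption. The main obstacle will be this final case analysis: the chord iteration is routine, but locating the correct $u$ and verifying the $y$-range containment demands careful bookkeeping of the relative vertical positions of the $g_i$'s with respect to $h_1$ and $h_2$.
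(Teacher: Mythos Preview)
Your ``geometric lever'' is false as stated. Having top and bottom sides on $\partial D$ does not force $D$ to coincide with $u$ on $u$'s $x$-range: in the C-shaped domain $[0,3]\times[0,1]\cup[0,1]\times[0,3]\cup[0,3]\times[2,3]$, the slice $u=[1.5,2.5]\times[0,1]$ has both horizontal sides on $\partial D$, yet $D$ also occupies $[1.5,2.5]\times[2,3]$ over the same $x$-range. So the implication ``$h'$'s $x$-range contains $u$'s $x$-range $\Rightarrow$ $h'$'s $y$-range lies inside $u$'s'' fails, and every place you invoke the lever (the non-consecutive case in item~1, item~2, and the final step of item~3) has a gap. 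The repair is a frame argument using simple connectivity: in each application you already have two vertical slices flanking $u$ and two horizontal slices (one above, one below the disputed level) meeting both of them, so together they bound a rectangular frame in $D$; if the desired intersection were empty, a point just beyond $u$'s top or bottom side would lie inside this frame but outside $D$, producing a hole. This fix goes through in all three items, but it is genuine work that your write-up omits.

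For item~3 the paper takes a shorter, purely combinatorial route. It picks any $h_1$--$h_2$ path in $G$, notes that each $N_G(u_i)$ is a subpath of the $R$-tree on $A_H$ with consecutive ones overlapping, and uses the fact that $h_1,h_2$ are adjacent in that tree: deleting the edge $\{h_1,h_2\}$ separates the tree, so the chain of overlapping subpaths cannot get from the $h_1$-component to the $h_2$-component without some $N_G(u_i)$ containing both. Two-connectivity then allows a repeat in $G-u_i$. Your bichordality-plus-straddling argument is a legitimate alternative once the lever is repaired, but it is longer and in effect re-derives the consequence of simple connectivity geometrically rather than reading it off the $R$-tree.
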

	\begin{proof}
		The first two statements are trivial. Suppose that $G$ is 2-connected, $h_1$ is joined to $h_2$ in the $R$-tree on $A_H$. There is a path connecting $h_1$ to $h_2$ in $G$. Every second node of this path is a vertical slice, and the neighborhoods of two vertical slices distance two apart have a common neighbor. The neighborhood of a vertical slice is path in the $R$-tree on $A_H$, so there exists a vertical slice $v_1$ such that $h_1,h_2\in N_G(v_1)$. Moreover, $G-v$ is still connected, so in the same manner we can find another vertical slice $v_2$ which is also joined to both $h_1$ and $h_2$ in $G$.
	\end{proof}
}

\medskip

\begin{claim}\label{claim:conn_rvis}
	If $M$ is connected, then any edge $e_0=\{h_0,v_0\}\in E(G)$ is $r$-visible from some edge of $M$.
\end{claim}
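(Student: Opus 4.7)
The plan is to argue by contradiction and extract the required edge from a chord of a carefully chosen cycle in $G$, exploiting the bichordality of $G$ established in Lemma~\ref{lemma:chordal}. Fix an edge $e_0 = \{v_0, h_0\} \in E(G)$ with $v_0 \in A_V$ and $h_0 \in A_H$, and set
\[ N \;=\; M_V \cap N_G(h_0), \qquad N' \;=\; M_H \cap N_G(v_0); \]
both are non-empty by the domination hypotheses. First I would dispose of the trivial case $v_0 \in M_V$ or $h_0 \in M_H$: the relevant vertex then lies in $V(M)$, and since $M_V$ and $M_H$ are both non-empty and $M$ is connected, this vertex has a neighbor in $M$, yielding an edge of $M$ sharing an endpoint with $e_0$ and hence $r$-seeing it.

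Assume now that $v_0 \notin M_V$, $h_0 \notin M_H$, and suppose for contradiction that no edge of $M$ is $r$-visible to $e_0$. An edge $\{u, w\} \in E(G)$ with $u \in N$ and $w \in N'$ would automatically lie in $E(M)$ (since both endpoints are in $V(M)$) and, together with $\{u, h_0\}$, $\{h_0, v_0\}$, $\{v_0, w\}$, form a $C_4$ on four distinct vertices of $G$, witnessing $r$-vision with $e_0$; so no such edge exists. Choose $u \in N$, $w \in N'$ minimizing $d_M(u, w)$. Since $M$ is bipartite with parts contained in $A_V$ and $A_H$, this distance is odd, and by what we just observed it is some $k \geq 3$. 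Fix a shortest $u$--$w$ path $u = x_0, x_1, \ldots, x_k = w$ in $M$ and consider the closed walk
\[ C:\quad x_0,\; x_1,\; \ldots,\; x_k,\; v_0,\; h_0,\; x_0. \]
Because $v_0, h_0 \notin V(M)$ in the present case and $v_0 \neq h_0$, the walk $C$ is a simple cycle of even length $k + 3 \geq 6$ in $G$.

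By the bichordality of $G$, the cycle $C$ has a chord, and bipartiteness forces it into one of three types. A chord of the form $\{v_0, x_j\}$ (necessarily with $j$ odd, $1 \leq j \leq k-2$) places $x_j \in M_H \cap N_G(v_0) = N'$; the prefix $x_0, x_1, \ldots, x_j$ is then a $u$-to-$N'$ walk of length $j \leq k - 2$, contradicting the minimality of $d_M(u, w)$. A chord $\{h_0, x_i\}$ (with $i$ even, $2 \leq i \leq k - 1$) symmetrically forces $x_i \in N$ and the suffix $x_i, \ldots, x_k$ has length $k - i \leq k - 2$, again contradicting minimality. Finally, a chord $\{x_i, x_j\}$ with $0 \leq i < j \leq k$ and $j - i \geq 3$ produces a shortcut $x_0, \ldots, x_i, x_j, \ldots, x_k$ in $M$ of length $k - (j - i) + 1 \leq k - 2 < k$, contradicting the fact that our chosen path is shortest. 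Every admissible chord is therefore excluded, contradicting Lemma~\ref{lemma:chordal} and refuting the assumption.

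The principal obstacle is engineering the cycle $C$ so that it is simultaneously simple, of length at least $6$, and arranged so that the minimality of the pair $(u,w) \in N \times N'$ (together with the minimality of the path between them) kills \emph{every} possible chord. Once the cycle is in place, the three-way chord analysis is mechanical.
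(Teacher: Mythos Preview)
Your proof is correct and follows essentially the same approach as the paper: both pick $v_1\in M_V\cap N_G(h_0)$ and $h_1\in M_H\cap N_G(v_0)$, close up a cycle through $e_0$ using a path in $M$ from $v_1$ to $h_1$, and then invoke the bichordality of $G$. The paper's version is terser --- it simply asserts that bichordality yields a $C_4$ containing both $e_0$ and an edge of $M$ (implicitly by iterating chords) --- whereas your double minimality (closest pair in $N\times N'$, then shortest $M$-path) lets you kill every possible chord in a single pass; this is a cleaner execution of the same idea rather than a different route.
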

\begin{proof}
	As $N_G(M_V\cup M_H)=V(G)$, there exists two vertices, $v_1\in M_V$ and $h_1\in M_H$, such that $\{v_1,h_0\},\{v_0,h_1\}\in E(G)$.

	\medskip

	If $v_0\in M_V$ or $h_0\in M_H$, then $\{v_0,h_1\}$ or $\{v_1,h_0\}$ is in $E(M)$.

	\medskip

	Otherwise, there exists a path in $M$, whose endpoints are $v_1$ and $h_1$, and this path and the edges $\{v_1,h_0\}$,$\{h_0,v_0\}$,$\{v_0,h_1\}$ form a cycle in $G$. By the bichordality of $G$, there exists a $C_4$ in $G$ which contains an edge of $M$ and $e_0$.
\end{proof}

\medskip

{\color{highlightnew}\Fref{claim:conn_rvis} implies that for connected $M$, we can select a subset of edges of $M$ that guard every edge of $G$. \Fref{case:disconnected} shows that if \Fref{thm:mainprime} holds for connected $M$, then it also holds when $M$ has multiple connected components. Furthermore, if \Fref{thm:mainprime} holds when $M$ is 2-connected, \Fref{case:conn} shows that theorem also holds when $M$ is connected.}

\medskip

{\color{highlightnew} Based on the level connectivity of $M$, we distinguish three cases: $M$ is 2-connected, $M$ is connected, and $M$ has multiple connected components. These cases and their proofs are quite different. When $M$ is connected or has multiple connected components, the proofs are relatively short and simple, and more importantly, only rely on elementary graph theory.

	\medskip

	The spirit of the proof dwells in \Fref{case:2conn}, which holds the deepest insight into the problem and is vastly longer and more complex than the other two cases following it. A few geometric arguments are present, but the overwhelming majority of reasoning in the 2-connected case is graph theoretic. Although this means that the proof is somewhat technical, we believe it is also quite robust, being built on the abstraction provided by $R$-trees and the pixelation graph.}

\subsection{\texorpdfstring{\boldmath $M$}{𝑀} is 2-connected}\label{case:2conn}
The $\frac43$ constant in the statement of \Fref{thm:mainprime} is determined by this case. Let us first present an outline of this case.

\medskip

{\color{highlightnew}
	First, we describe two fundamental properties of $M$ in \Fref{claim:D}~and~\ref{claim:DM}. Then, some of the horizontal slices of $M$ are refined into two thinner slices each, so as to avoid technical difficulties later in the proof. From then on, we work in this refined structure, denoted by $M'$ and $G'$. \Fref{claim:tau} provides the link between point guards of $G'$ and $G$. Next, we establish a relation  between edges of $M'$ (\Fref{def:dominance}), which describes when an edge can be replaced by another one, so that the replacement edge $r$-covers any edge seen exclusively by the replaced edge. This leads to the definition of ``hyperguards'' of $M'$ (\Fref{def:hyperguard}), which are proven to be point guard systems of $G'$ in \Fref{lemma:zprime}.

	\medskip

	After the lengthy preparation, we are finally ready to construct a hyperguard of $M'$ in \Fref{sec:constructing}. In the following \Fref{sec:estimating}, the size of the constructed hyperguard is estimated, finishing the proof of this case.
}

\medskip

If $E(M)$ consists of a single edge $e$, then $Z=\{e\}$ is clearly a point guard system of $G$ by~\Fref{claim:conn_rvis}.

\medskip

Suppose now, that $M$ has more than two vertices. Any edge of $M$ is contained in a cycle of $M$, and by the bichordality property, there is such a cycle of length 4. It is easy to see that the convex hull of the pixels determined by the edges of a $C_4$ is a rectangle. Define
\[ D_M=\bigcup\limits_{\{e_1,e_2,e_3,e_4\}\text{ is a $C_4$ in }M}\mathrm{Conv}\left(\bigcup\limits_{i=1}^4 \cap e_i\right).\]
The simply connectedness of $D$ implies that $D_M\subseteq D$.

\begin{claim}\label{claim:D}
	For any slice $s\in V(M)$ the intersection of $s$ and $D_M$ is connected.
\end{claim}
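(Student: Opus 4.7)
My plan is to show that, for any $s \in V(M)$, the set $s \cap D_M$ equals the smallest axis-parallel rectangle $S \subseteq s$ containing all pixels $\cap e$ with $e \in E(M)$ incident to $s$; being a rectangle, $S$ is connected. By the symmetry between horizontal and vertical slices, it suffices to treat $s = v \in M_V$.

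\medskip

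For the inclusion $S \subseteq v \cap D_M$, order $N_M(v) = \{h_1, \dots, h_m\}$ along the path that $N_M(v)$ induces in the $R$-tree on $M_H$; this ordering exists by the first bullet of \Fref{claim:treeprop} applied with $M$ in the role of $G$ (recall that $M$ is chordal bipartite as an induced subgraph of $G$, and is 2-connected by hypothesis). The 2-connectedness of $M$ lets us invoke the third bullet of \Fref{claim:treeprop} for each consecutive pair $h_i, h_{i+1}$: they share a second common neighbour $v' \in M_V \setminus \{v\}$, so $\{v, h_i, v', h_{i+1}\}$ is a 4-cycle of $M$ whose associated rectangle contains the vertical strip inside $v$ running from $v \cap h_i$ to $v \cap h_{i+1}$. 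The union of these $m-1$ rectangles intersected with $v$ recovers the whole $S$.

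\medskip

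For the reverse inclusion $v \cap D_M \subseteq S$, let $C$ be any 4-cycle of $M$ with associated rectangle $R = \mathrm{Conv}(\bigcup_{e\in C}\cap e)$ such that $R \cap v$ has nonempty interior. If $v$ is a vertex of $C$, the two horizontal slices of $C$ lie in $N_M(v)$ and $R \cap v \subseteq S$ is immediate. The delicate case is $C = \{v_1, h_a, v_2, h_b\}$ with $v \notin \{v_1, v_2\}$. Here I use that the top and bottom sides of $v$ are contained in $\partial D$: since $R \subseteq D$ is axis-parallel with $x$-range overlapping $v$'s, $R$ cannot reach above $v$'s top or below $v$'s bottom without leaving $D$, and hence $R$'s $y$-range is contained in $v$'s. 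Combined with the fact that $h_a$'s and $h_b$'s $x$-ranges contain $R$'s $x$-range (and therefore overlap $v$'s), this forces $h_a, h_b \in N_G(v)$; since $h_a, h_b \in M_H$, they in fact lie in $N_M(v)$, so $R \cap v \subseteq S$.

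\medskip

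I expect this last ``external $C_4$'' case to be the only nontrivial step: it is the unique place where graph-theoretic information does not suffice and one must exploit the geometry --- specifically, the interaction between $R \subseteq D$ and the fact that $v$'s horizontal sides lie in $\partial D$ --- to rule out a 4-cycle of $M$ avoiding $v$ from contributing a second, disconnected piece to $v \cap D_M$.
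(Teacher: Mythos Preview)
Your proof is correct and takes a genuinely different route from the paper's. The paper argues by contradiction: assuming two pixels $\cap e_1,\cap e_2$ of edges incident to $s$ lie in different components of $s\cap D_M$, it takes a shortest cycle of $M$ through $e_1$ and $e_2$ (available by 2-connectedness) and invokes bichordality to force $s$ to be adjacent to every second vertex of that cycle; the resulting chain of $C_4$'s through $s$ then links $\cap e_1$ to $\cap e_2$ inside $s\cap D_M$. That argument is purely graph-theoretic and never touches the geometric hypothesis that slice sides lie on $\partial D$. Your approach instead characterises $s\cap D_M$ explicitly as the rectangle $S$ --- a strictly stronger conclusion --- at the price of invoking \Fref{claim:treeprop} and the boundary hypothesis. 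The payoff is that you handle the ``external $C_4$'' case (a $C_4$ of $M$ whose rectangle meets $s$ without $s$ being a vertex of the cycle) head-on; the paper's argument tacitly presumes that every component of $s\cap D_M$ contains a pixel of some edge incident to $s$, and it is precisely your geometric step that justifies this.
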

\begin{proof}
	Suppose that $e_1,e_2\in E(M)$ are such that $\cap e_1$ and $\cap e_2$ are in two different components of $s\cap D_M$. Since $M$ is 2-connected, there is a path connecting $e_1\setminus \{s\}$ and $e_2\setminus\{s\}$ in $M-s$.

	\medskip

	Take the shortest cycle in $M$ containing $e_1$ and $e_2$.
	If this cycle contains 4 edges, then the convex hull of their pixels is in $D_M$, which is a contradiction.
	Similarly, if the cycle contains more than 4 edges, the bichordality of $M$ implies that $s$ is joined to every second node of the cycle, which contradicts our assumption that $s\cap D_M$ is disconnected.
\end{proof}

{\color{highlightnew}
\begin{claim}\label{claim:DM}
	$D_M$ is simply connected.
\end{claim}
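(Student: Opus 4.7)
The plan is to argue by contradiction: suppose $\mathbb{R}^2\setminus D_M$ has a bounded component $H$. First I would show $H\subseteq D$, which follows from $D_M\subseteq D$ together with the simple connectedness of $D$ (all components of $\mathbb{R}^2\setminus D$ are unbounded, so if $H$ met any point outside $D$ it would itself be unbounded). Fix a generic $p\in H$ lying in the interior of a single pixel $\cap e_0=v_0\cap h_0$, where $e_0=\{v_0,h_0\}\in E(G)$.

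Since $H$ is bounded, moving from $p$ horizontally and vertically produces four first-exit points $q_L,q_R,q_U,q_D\in\partial H\subseteq D_M$, each lying in a rectangle of $D_M$ coming from some $C_4$ of $M$. Focus on $q_R$: it lies in the rectangle $R_R$ of a $C_4$ $\{v_R^1,v_R^2,h_R^1,h_R^2\}\subseteq V(M)$, and because $q_R$ is the first hit of $R_R$ moving right from $p$ at height $y_p$, it sits on the left edge of $R_R$. Hence its $x$-coordinate is the leftmost $x$-coordinate of the two slices $v_R^1,v_R^2$, and $y_p$ lies in the convex hull of the $y$-ranges of $h_R^1,h_R^2$. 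In the \emph{favorable configuration}, where each of the four exit analyses lands $y_p$ (respectively $x_p$) in a horizontal (respectively vertical) slice of its $C_4$ rather than in a gap, one concludes $h_0\in M_H$ and $v_0\in M_V$, so $e_0\in E(M)$. Then by 2-connectedness and bichordality of $M$, $e_0$ lies in a $C_4$ of $M$, whose rectangle contains $\cap e_0\ni p$---contradicting $p\notin D_M$.

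The main obstacle is the \emph{gap case}, where $y_p$ lies strictly between the $y$-ranges of $h_R^1$ and $h_R^2$, so that the horizontal slice $h_0$ containing $p$ differs from $h_R^1,h_R^2$ and need not belong to $M_H$. To handle this I would invoke \Fref{claim:D}: the set $v_R^1\cap D_M$ is connected, and since it contains the two pixels $v_R^1\cap h_R^1$ and $v_R^1\cap h_R^2$ at disjoint heights, the gap between them must be bridged inside $v_R^1$ by rectangles coming from further $C_4$'s of $M$. One such bridging rectangle contains a point of $v_R^1$ at height $y_p$, and iterating the case analysis against its $C_4$ strictly decreases the width of the relevant $y$-gap. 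Since $M$ has only finitely many $C_4$'s, this descent terminates in the favorable configuration. Tracking the consistency of these reductions across the four exit points $q_L,q_R,q_U,q_D$ simultaneously---and verifying that the resulting four slices do indeed surround $p$---is the technical heart of the argument.
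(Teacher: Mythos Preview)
Your approach is substantially more involved than the paper's, and the descent in your ``gap case'' is not justified.

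The paper argues as follows. Connectedness of $D_M$ follows from connectedness of $M$ together with Claim~\ref{claim:D}. For simple connectedness, suppose $D_M$ has a hole. If the hole is a rectangle, the four slices of $M$ bounding it form a $C_4$ in $M$, so by the definition of $D_M$ the rectangle is already filled --- contradiction. Otherwise the hole has at least six vertices and hence a reflex vertex $x$; since locally only one quadrant at $x$ lies in $D_M$, the point $x$ is a corner of some $C_4$-rectangle and therefore a vertex of a pixel $\cap e$ with $e=\{v,h\}\in E(M)$. The slice $h\in M_H$ then enters the hole (which lies inside the simply connected $D$) and must meet $D_M$ again on the far side, so $h\cap D_M$ is disconnected --- contradicting Claim~\ref{claim:D}. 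That is the entire argument.

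Your proposal instead picks a generic interior point $p$ of the hole and shoots rays in four directions. The favorable case is fine, but the gap case has a real problem: you assert that iterating against a bridging $C_4$ ``strictly decreases the width of the relevant $y$-gap'', yet nothing in Claim~\ref{claim:D} constrains the horizontal slices $h_1',h_2'$ of the new $C_4$ --- they could reproduce a gap of the same width, or simply coincide with $h_R^1,h_R^2$. Finiteness of the set of $C_4$'s does not yield termination without a strictly decreasing quantity, and you do not exhibit one. You also concede that coordinating the reductions across all four exit directions is ``the technical heart'' but leave it undone. The paper sidesteps this entirely by working at a reflex vertex of the hole rather than at a generic interior point: that choice immediately puts a slice of $M$ in hand and lets Claim~\ref{claim:D} finish in a single step.
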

\begin{proof}
	Connectedness of $D_M$ follows from the connectedness of $M$ and \Fref{claim:D}. Suppose there is a hole in $D_M$. If the hole is a rectangle, the four slices of $M$ bounding it induce a $C_4$, which contradicts the definition of $D_M$.

	\medskip

	If the hole has more than 6 vertices, take a reflex vertex $x$ of it, and let $e\in E(M)$ be such that $x$ is a vertex of $\cap e$. Since $D_M\subseteq D$ and $D$ is simply connected, the horizontal slice of $e$ crosses the hole, and intersects another vertical slice of $M$. This contradicts \Fref{claim:D}.
\end{proof}}

\subsubsection{Splitting some slices of \texorpdfstring{$M_H$ and $M_V$}{𝑀\_𝐻 and 𝑀\_𝑉}}

Let $B_H\subset M_H$ be the set of those slices whose top and bottom sides both intersect $\partial D_M$ in an uncountable number of points of $\mathbb{R}^2$. Similarly, let $B_V\subset M_V$ be the set of those slices whose left and right sides both intersect $\partial D_M$ in an uncountable number of points.

\medskip

For technical reasons, we split each element of $h\in B_H$ horizontally through $c(h)$ to get two isometric rectangles in $\mathbb{R}^2$; let the set of the resulting refined horizontal slices be $B'_H$. Similarly, we get $B'_V$ by splitting elements of $B_V$ vertically through their centroids. Also, we define
\begin{align*}
	A'_H&=B'_H\bigcup A_H\setminus B_H,\\
	A'_V&=B'_V\bigcup A_V\setminus B_V,\\
	M'_H&=B'_H\bigcup M_H\setminus B_H,\\
	M'_V&=B'_V\bigcup M_V\setminus B_V.
\end{align*}
\textcolor{highlightnew}{Let the $R$-tree on $A'_H$ and $A'_V$ be $T'_H$ and $T'_V$, respectively.} Let $\tau$ map $h\in B'_H$ to $\tau(h)\in A_H$ for which $h\subseteq \tau(h)$ holds, and let $\tau$ be the identity function on $A'_H\setminus B'_H$. Define $\tau$ analogously on $A'_V$.

\medskip

Let $G'$ be the intersection graph of $A'_H$ and $A'_V$ (as in the statement of \Fref{thm:mainprime}). Also, let $M'=G'[M'_H\cup M'_V]=\tau^{-1}(M)$. Observe that $\tau$ naturally defines a graph homomorphism $\tau:G'\to G$ (edges are mapped vertex-wise).

\medskip

\begin{claim}\label{claim:tau}
	In $G'$, the set $M'_H$ dominates $A'_V$, and $M'_V$ dominates $A'_H$.
	Furthermore, if $Z'\subseteq E(M')$ is a point guard system of $G'$, then $Z=\tau(Z')\subseteq E(M)$ is a point guard system of $G$.
\end{claim}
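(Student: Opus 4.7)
The plan is to verify both assertions by unpacking the definitions and exploiting two key observations:
(i) $\tau\colon G'\to G$ is a graph homomorphism, since $h'\subseteq \tau(h')$ and $v'\subseteq \tau(v')$ imply $\mathrm{int}(h')\cap \mathrm{int}(v')\subseteq \mathrm{int}(\tau(h'))\cap \mathrm{int}(\tau(v'))$;
(ii) when a slice $s\in B_H\cup B_V$ is cut through its centroid, any non-degenerate pixel sitting inside $s$ meets at least one of the two halves with non-empty interior.

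For the domination claim, pick $v'\in A'_V$ and set $v=\tau(v')$. Since $M_H$ dominates $v$ in $G$, some $h\in M_H$ satisfies $\mathrm{int}(h)\cap \mathrm{int}(v)\neq \emptyset$. If $v\notin B_V$ then $v'=v$, and observation (ii) applied to $h$ (when $h\in B_H$) or $h$ itself (otherwise) exhibits an element of $M'_H$ dominating $v'$ in $G'$. The delicate situation is $v\in B_V$: now $v'$ is only one of the two halves of $v$, and we must produce an $h'\in M'_H$ meeting that specific half. I would argue this geometrically: the defining condition of $B_V$ forces both vertical sides of $v$ to lie on $\partial D_M$ in uncountably many points, which, combined with the connectedness of $v\cap D_M$ (\Fref{claim:D}) and the simple connectedness of $D_M$ (\Fref{claim:DM}), implies that $C_4$s of $M$ must be anchored on both sides of the centroid cut, producing slices of $M_H$ whose pixels inside $v$ reach the required half. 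The domination of $A'_H$ by $M'_V$ is symmetric.

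For the transfer of the guard system, fix $e_0=\{h_0,v_0\}\in E(G)$. Observation (ii) applied to $h_0$ and $v_0$ yields a lift $e'_0\in E(G')$ with $\tau(e'_0)=e_0$: the non-degenerate pixel $h_0\cap v_0$ is carved by the at most two centroid cuts into at most four subrectangles, one of which still has non-empty interior and thus determines the desired edge. By hypothesis some $e'\in Z'$ has $r$-vision of $e'_0$ in $G'$. If $e'\cap e'_0\neq \emptyset$, the shared vertex is mapped by $\tau$ to a vertex of $e_0$, so $\tau(e')$ shares that vertex with $e_0$ in $G$. Otherwise $e'\cup e'_0$ induces a $C_4$ in $G'$, and applying $\tau$ leaves two possibilities: either $\tau$ is injective on the four vertices, in which case the image is a $C_4$ in $G$ with $e_0$ and $\tau(e')$ as opposite edges; or $\tau$ identifies a half-slice with its sibling, which can only happen when the sibling is a half of $h_0$ or of $v_0$, so that $\tau(e')$ shares $h_0$ or $v_0$ with $e_0$.

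The principal obstacle, in my view, is the $v\in B_V$ subcase of the domination part: combinatorial reasoning through $\tau$ alone does not suffice there, and one must really invoke the geometric definition of $B_V$ along with the structure of $D_M$ provided by \Fref{claim:D} and \Fref{claim:DM}. All remaining parts are bookkeeping verifications that pixels, $C_4$s, and shared vertices survive the passage from $G'$ to $G$ under the homomorphism $\tau$.
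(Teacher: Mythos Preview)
Your overall plan is sound, but you have misidentified where the difficulty lies, and as a result the one case you flag as the ``principal obstacle'' is handled by a vague sketch that is neither needed nor obviously correct.

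The key point you are missing is that your observation (ii) can be strengthened from ``at least one of the two halves'' to \emph{both} halves. Indeed, write $h=[a,b]\times[c,d]\in A_H$ and $v=[e,f]\times[g,k]\in A_V$ with $\{h,v\}\in E(G)$. Since the top and bottom sides of $v$ lie on $\partial D$ while $\mathrm{int}(h)\subset\mathrm{int}(D)$, we must have $g\le c$ and $k\ge d$; symmetrically $a\le e$ and $b\ge f$. Hence the pixel is exactly $h\cap v=[e,f]\times[c,d]$: it spans the \emph{full width} of $v$ and the \emph{full height} of $h$. Consequently, the vertical centroid cut of $v$ at $x=(e+f)/2$ leaves a non-degenerate sub-pixel in \emph{each} half, and likewise for a horizontal centroid cut of $h$. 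In other words, whenever $\{h,v\}\in E(G)$, every pair $(h',v')\in\tau^{-1}(h)\times\tau^{-1}(v)$ is an edge of $G'$; equivalently, $\tau$ maps non-edges to non-edges.

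With this in hand the case $v\in B_V$ is no harder than the others: any $h\in M_H$ adjacent to $v$ in $G$ gives, for each half $v'\in\tau^{-1}(v)$ and each $h'\in\tau^{-1}(h)\subseteq M'_H$, an edge $\{h',v'\}\in E(G')$. No appeal to the definition of $B_V$ or to Claims~\ref{claim:D} and~\ref{claim:DM} is required; this is precisely the paper's two-line argument (``$\tau$ maps non-edges to non-edges, and $M'_H=\tau^{-1}(M_H)$''). Your proposed geometric detour through $\partial D_M$ and ``$C_4$'s anchored on both sides of the centroid cut'' is both unnecessary and, as written, not a proof: it never explains why such anchoring would force a pixel into the specific half $v'$.

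Your treatment of the second assertion is correct and simply unpacks what the paper compresses into ``$\tau$ is a graph homomorphism, so it preserves $r$-visibility''. The lift $e_0'\in E(G')$ of $e_0\in E(G)$ that you need also exists immediately from the strengthened observation (ii), since every pixel survives all centroid cuts intact.
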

\begin{proof}
	The first statement of this claim holds, since $\tau$ maps non-edges to non-edges, and both $M'_H=\tau^{-1}(M_H)$ and $M'_V=\tau^{-1}(M_V)$ by definition.
	As $\tau$ is a graph homomorphism, it preserves $r$-visibility, which implies the second statement of this claim.
\end{proof}

\medskip

Notice, that $M'$ is 2-connected and $D_M=D_{M'}$.
It is straightforward to verify that an edge $e\in E(M')$ falls into one of the following 4 categories:
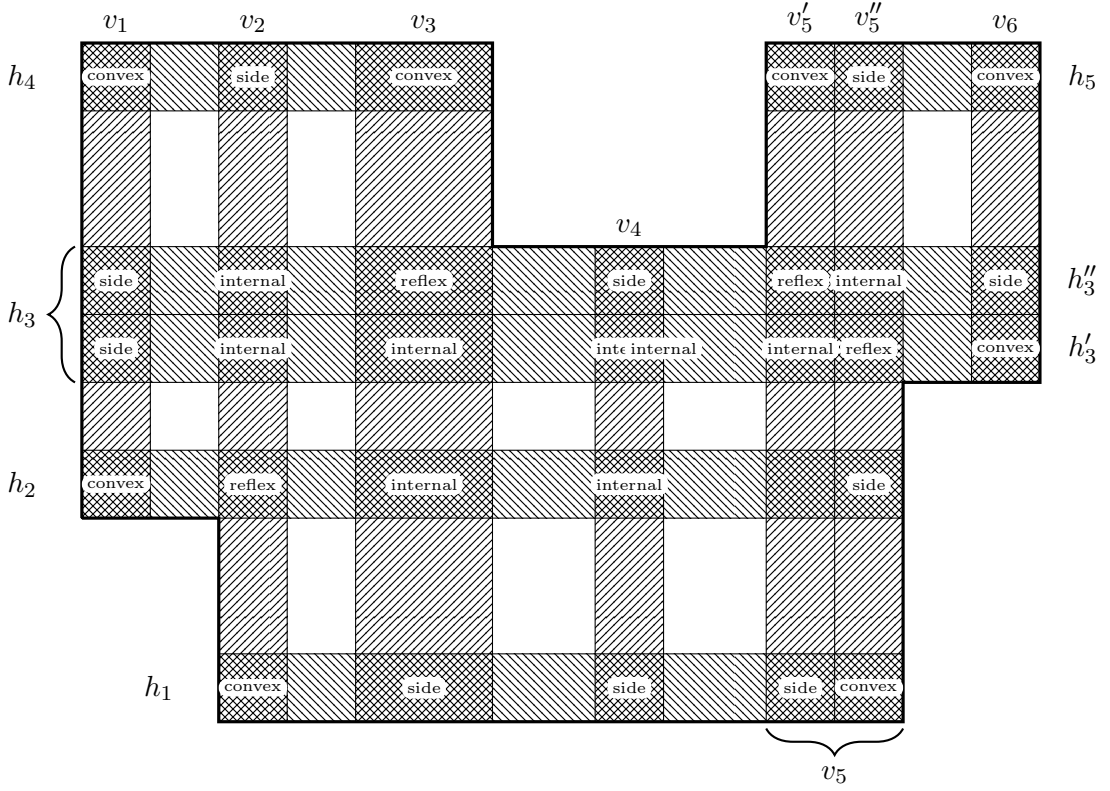
\begin{figure}[H]
	\centering
	\begin{tikzpicture}[scale=0.9]
		\begin{scope}
			\draw[very thick] (1,3) -- (1,10) -- (7,10) -- (7,7) -- (11,7) -- (11,10) -- (15,10) -- (15,5) -- (13,5) -- (13,0) -- (3,0) -- (3,3) -- (1,3);

			\def\hpat{north east lines}
			\def\vpat{north west lines}
			\filldraw[thin,pattern=\hpat] (1,3) rectangle (2,10);
			\filldraw[thin,pattern=\hpat] (3,0) rectangle (4,10);
			\filldraw[thin,pattern=\hpat] (5,0) rectangle (7,10);
			\filldraw[thin,pattern=\hpat] (8.5,0) rectangle (9.5,7);
			\filldraw[thin,pattern=\hpat] (11,0) rectangle (13,10);
			\draw[thin] (12,0) -- (12,10);
			\filldraw[thin,pattern=\hpat] (14,5) rectangle (15,10);

			\filldraw[thin,pattern=\vpat] (3,0) rectangle (13,1);
			\filldraw[thin,pattern=\vpat] (1,3) rectangle (13,4);
			\filldraw[thin,pattern=\vpat] (1,5) rectangle (15,7);
			\draw[thin] (1,6) -- (15,6);
			\filldraw[thin,pattern=\vpat] (1,9) rectangle (7,10);
			\filldraw[thin,pattern=\vpat] (11,9) rectangle (15,10);

			\draw [decorate,thick,decoration={brace,amplitude=10pt}]
(13,-0.1) -- (11,-0.1);

			\foreach \x/\y/\l in {2/10/1,4/10/2,6.5/10/3,9.5/7/4,15/10/6}
			\draw (\x-0.5,\y) node[anchor=south] {$v_\l$};
			
			\draw (12,-0.5) node[anchor=north] {$v_5$};
			\draw (12-0.5,10) node[anchor=south] {$v'_5$};
			\draw (13-0.5,10) node[anchor=south] {$v''_5$};

			\draw [decorate,thick,decoration={brace,amplitude=10pt}]
			(1-0.1,5) -- (1-0.1,7);

			\foreach \x/\y/\l in {3/0/1,1/3/2,1/5.5/3,1/9/4,16.5/9/5}
			\draw (\x-0.5,\y+0.5) node[anchor=east] {$h_\l$};

			\draw (16,5+0.5) node[anchor=east] {$h'_3$};
			\draw (16,6+0.5) node[anchor=east] {$h''_3$};

			\foreach \x/\y in {2/4,2/10,6.5/10,4/1,13/1,12/10,15/10,15/6}
			\draw (\x-0.5,\y-0.5) node[rectangle,rounded corners=3pt,fill=white,text opacity=1,fill opacity=1,inner sep=2pt] {\tiny convex};

			\foreach \x/\y in {2/6,2/7,15/7,4/10,13/4,9.5/7,9.5/1,6.5/1,12/1,13/10}
			\draw (\x-0.5,\y-0.5) node[rectangle,rounded corners=3pt,fill=white,text opacity=1,fill opacity=1,inner sep=2pt] {\tiny side};

			\foreach \x/\y in {4/4,6.5/7,12/7,13/6}
			\draw (\x-0.5,\y-0.5) node[rectangle,rounded corners=3pt,fill=white,text opacity=1,fill opacity=1,inner sep=2pt] {\tiny reflex};

			\foreach \x/\y in {4/6,4/7,6.5/4,6.5/6,9.5/4,9.5/6,13/7,12/6,10/6}
			\draw (\x-0.5,\y-0.5) node[rectangle,rounded corners=3pt,fill=white,text opacity=1,fill 	opacity=1,inner sep=2pt] {\tiny internal};

		\end{scope}
	\end{tikzpicture}
	\caption{We have $M_H=\{h_1,\dots,h_5\}$, $M'_H=M_H-h_3+h_3'+h_3''$, $M_V=\{v_1,\dots,v_6\}$, $M'_V=M_V-v_5+v'_5+v''_5$. The thick line is the boundary of $D_M$. Each rectangle pixel is labeled according to the type of its corresponding edge of $M'$.}\label{fig:types}
\end{figure}

\begin{description}\setlength{\baselineskip}{1em}
	\item[Convex edge:] 3 vertices of $\cap e$ fall on $\partial D_M$, e.g., the edge $\{h_2,v_1\}$ on \Fref{fig:types};

	\item[Reflex edge:] exactly 1 vertex of $\cap e$ falls on $\partial D_M$, e.g., $\{h_3'',v_3\}$ on \Fref{fig:types};

	\item[Side edge:] two neighboring vertices of $\cap e$ fall on $\partial D_M$,  e.g., $\{h_1,v_4\}$ on \Fref{fig:types};

	\item[Internal edge:] zero vertices of $\cap e$ fall on $D_M$,  e.g., $\{h_2,v_3\}$ on \Fref{fig:types}.
\end{description}

Notice that on \Fref{fig:types}, the edge $\{h_3,v_5\}$ falls into neither of the previous categories, as two non-neighboring (diagonally opposite) vertices of pixel $h_3\cap v_5$ fall on $D_M$. This clearly cannot happen with edges of $G'$, but $G$ may contain edges of this type. The $\tau$ preimage of such an edge is a set of two reflex and two internal edges of $M'$.

\medskip

The preimages of a convex edge are a convex edge and a side edge ($M'$ is 2-connected), the preimages of a side edge are two side edges, and the preimages of a reflex edge are a reflex edge and an internal edge. In the other direction, $\tau$ maps convex edges to convex edges, and side edges to convex or side edges.

\medskip

The following definition allow us to break our proof into smaller, transparent parts, which ultimately boils down to presenting a precise proof. It captures a condition which in certain circumstances allows us to conclude that a guard $e_1$ can be replaced by $e_2$ such that we still have complete coverage of $G'$.

\begin{definition}\label{def:dominance} We say that a slice $s_0$ is \textbf{between} slices $s_1$ and $s_2$ (all vertical or horizontal), if in the corresponding $R$-tree $s_0$ is on the path between $s_1$ and $s_2$. For any two edges $e_1,e_2\in E(M')$, where $e_1=\{v_1,h_1\}$ and $e_2=\{v_2,h_2\}$, we write $e_2 \covers e_1$ ($e_2$ dominates $e_1$) iff either
	\begin{itemize}
		\item $h_1=h_2$, and $\exists h_3,h_4\in M'_H$ such that $\{v_1,v_2,h_3,h_4\}$ induces a $C_4$ in $M'$, and $h_1=h_2$ is between $h_3$ and $h_4$; or

		\item $v_1=v_2$, and $\exists v_3,v_4\in M'_V$ such that $\{v_3,v_4,h_1,h_2\}$ induces a $C_4$ in $M'$, and $v_1=v_2$ is between $v_3$ and $v_4$; or

		\item $e_1\cap e_2=\emptyset$, and $\exists v_3\in M'_V$ and $h_3\in M'_H$ such that both $\{v_1,h_2,v_2,h_3\}$ and $\{h_1,v_3,h_2,v_2\}$ induces a $C_4$ in $M'$;
		      furthermore, $v_1$ is between $v_2$ and $v_3$, and $h_1$ is between $h_2$ and $h_3$.
	\end{itemize}
	We write $e_2 \rela e_1$ iff both $e_2 \covers e_1$ and $e_1 \covers e_2$ hold. Note that $\rela$ is a symmetric, but generally intransitive relation. \textcolor{highlightnew}{For convenience, we define both relations to be reflexive.}
\end{definition}

For example, on \Fref{fig:types}, $\{h_1,v_3\}\rela \{h_3'',v_3\}$, and $\{h_1,v_2\}\rightarrow \{h_3'',v_3\}$.
Also, $\{h_3'',v_3\}\rela \{h_3'',v_1\}$, but $\{h_3'',v_3\}\not\rightarrow \{h_3',v_1\}$. This is a technicality which makes the proofs easier, but does not cause any issues in the end, as $\tau(\{h_3'',v_1\})=\tau(\{h_3',v_1\})$. {\color{highlightnew} The fact that $\{h_3'',v_3\}\rightarrow \{h_3',v_5''\}$ and $\{h_3',v_5''\}\not\rightarrow \{h_3'',v_3\}$ shows that $\rightarrow$ is not symmetric.}

\subsubsection{Hyperguards}\label{sec:hyperguards}

We will search for a point guard system of $M'$ with very specific properties, which are described by the following definition.

\begin{definition}\label{def:hyperguard}
	Suppose $Z'\subseteq E(M')$ is such, that
	\begin{enumerate}
		\item $Z'$ contains every convex edge of $M'$,
		\item for any non-internal edge $e_1\in E(M')\setminus Z'$, there exists some $e_2\in Z'$ for which $e_2\covers e_1$, and
		\item\label{item:internal} \textcolor{highlightnew}{if $h_3,h_4\in M'_H$ are neighboring slices in the $R$-tree on $M'_H$, and $v_3,v_4$ are the end-nodes of the path induced by $N_{M'}(h_3)\bigcap N_{M'}(h_4)$ in the $R$-tree on $M'_V$, and $\{v_3,h_3,v_4,h_4\}$ induces a $C_4$ in $M'$, then there exists $e_2\in Z'$ such that $e_2\rightarrow \{v_3,h_3\},\{v_4,h_3\}$ or $e_2\rightarrow \{v_3,h_4\},\{v_4,h_4\}$ holds.}
	\end{enumerate}
	If these three properties hold, we call $Z'$ a \textbf{hyperguard} of $M'$.
\end{definition}

The \ref{item:internal}$^\text{rd}$ property of hyperguards corresponds to the  configuration in $D_M$ shown on \Fref{fig:neck}.  Observe that symmetry between horizontal and vertical slices is broken this property.
\begin{figure}[H]
	\centering
	\begin{tikzpicture}[scale=1]
		\begin{scope}
			\draw[very thick] (-1,0) -- (0,0) -- (0,-2) -- (-1,-2);
			\draw[very thick] (6,0) -- (5,0) -- (5,-2) -- (6,-2);

			\def\hpat{north east lines}
			\def\vpat{north west lines}
			\fill[pattern=\hpat] (-2,0) rectangle (7,1);
			\fill[pattern=\hpat] (-2,-3) rectangle (7,-2);
			\fill[pattern=\vpat] (0,2) rectangle (1,-4);
			\fill[pattern=\vpat] (2,2) rectangle (3,-4);
			\fill[pattern=\vpat] (4,2) rectangle (5,-4);

			\foreach \x/\y in {0/0,4/0,0/-3,4/-3}
			\draw (\x+0.5,\y+0.5) node[rectangle,rounded corners=3pt,fill=white,text opacity=1,fill opacity=1,inner sep=2pt] {\tiny reflex};

			\foreach \x/\y in {2/0,2/-3}
			\draw (\x+0.5,\y+0.5) node[rectangle,rounded corners=3pt,fill=white,text opacity=1,fill 	opacity=1,inner sep=2pt] {\tiny internal};

			\foreach \x/\y/\z in {-2/0/$h_3$,-2/-3/$h_4$,0/-5/$v_3$,2/-5/$v_5$,4/-5/$v_4$}
			\draw (\x+0.5,\y+0.5) node[rectangle,rounded corners=3pt,fill=white,text opacity=1,fill 	opacity=1,inner sep=2pt] {\z};


		\end{scope}
	\end{tikzpicture}
	\caption{\color{highlightnew} A neck in $D_M$. There are no horizontal slices of $M'_H$ between $h_3$ and $h_4$, but there can be vertical slices between $v_3$ and $v_4$.}\label{fig:neck}
\end{figure}
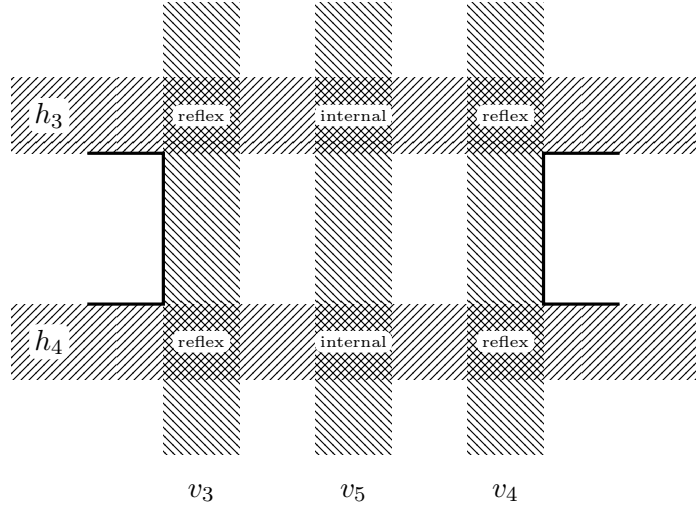

\begin{lemma}\label{lemma:zprime}
	Any hyperguard $Z'$ of $M'$ is a point guard system of $G'$, i.e., any edge of $G'$ is $r$-visible from some element of $Z'$.
\end{lemma}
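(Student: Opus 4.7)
Fix an arbitrary $e_0 = \{v_0, h_0\} \in E(G')$; the task is to produce $e^\ast \in Z'$ with $r$-vision of $e_0$.

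\textbf{Bridging $G'$ to $M'$.} By \Fref{claim:tau}, $M'_H$ dominates $A'_V$ and $M'_V$ dominates $A'_H$ in $G'$, so there exist $h' \in M'_H$ and $v' \in M'_V$ with $\{v_0,h'\}, \{v', h_0\} \in E(G')$. An argument analogous to the proof of \Fref{claim:conn_rvis}, using the 2-connectedness of $M'$ together with the chordal bipartiteness of $G'$ (itself an induced subgraph of a chordal bipartite graph, \Fref{lemma:chordal}), shortens a cycle through $e_0$ and $\{v', h'\}$ to a $C_4$ and produces an edge $e_1 \in E(M')$ with $r$-vision of $e_0$ in $G'$.

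\textbf{Case split on the type of $e_1$ in $D_M$.} If $e_1 \in Z'$ we are done, so assume $e_1 \notin Z'$. If $e_1$ is convex, Property~1 of \Fref{def:hyperguard} already forces $e_1 \in Z'$, which is vacuous. If $e_1$ is side or reflex, Property~2 supplies $e_2 \in Z'$ with $e_2 \covers e_1$, and we invoke the key subclaim below. If $e_1$ is internal, both $v_1$ and $h_1$ lie in the interior of the paths $N_{M'}(h_1)$ and $N_{M'}(v_1)$ in their respective $R$-trees; we then walk along the chain of internal edges of $M'$ containing $e_1$ in the $M'_H$-direction, using bichordality at each step to preserve $r$-vision of $e_0$, until we reach a pair of horizontal neighbours $h_3, h_4 \in M'_H$ satisfying the hypothesis of Property~3 of \Fref{def:hyperguard}. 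Property~3 then delivers the required $e_2 \in Z'$.

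\textbf{Key subclaim: dominance preserves vision.} If $e_2 \covers e_1$ in $M'$ and $e_0 \in E(G')$ has $r$-vision of $e_1$ in $G'$, then $e_2$ also has $r$-vision of $e_0$ in $G'$. The proof is a case analysis on the three clauses of \Fref{def:dominance} combined with the two ways $r$-vision from $e_0$ to $e_1$ may be witnessed (intersection or $C_4$). The geometric engine is that the ``between'' hypothesis of the dominance places $\cap e_1$ inside the axis-aligned rectangle bounded by the $C_4$ witnessing $e_2 \covers e_1$, so any pixel $\cap e_0$ touching $\cap e_1$ also reaches this rectangle. \Fref{claim:treeprop}, which describes the path-structure of $N_{G'}(s) \cap N_{G'}(s')$ in the appropriate $R$-tree, then supplies the required intersection or $C_4$ in $G'$ between $e_0$ and $e_2$.

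\textbf{Main obstacle.} The internal-edge case is the most delicate: Property~3 is asymmetric (it is stated only for horizontal necks and only guarantees a guard on one of $h_3$ or $h_4$), so one must verify that the side of the neck on which the guard lands is compatible with the location of $\cap e_0$. When the first natural choice does not cover $e_0$, we argue that chasing the chain further or combining it with the symmetric $M'_V$-side analysis either yields a non-internal $e_1$ (handled by Property~2) or produces an alternative neck whose dominated horizontal slice does cover $e_0$.
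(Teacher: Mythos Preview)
Your ``key subclaim'' --- that $e_2\covers e_1$ together with $r$-vision of $e_0$ from $e_1$ implies $r$-vision of $e_0$ from $e_2$ --- is false as stated, and this is the central gap. Take the first clause of \Fref{def:dominance}: $h_1=h_2$, and $\{v_1,v_2,h_3,h_4\}$ is a $C_4$ in $M'$ with $h_1$ between $h_3$ and $h_4$. Suppose the $r$-vision $e_0\to e_1$ is witnessed by the $C_4$ $\{v_0,h_0,v_1,h_1\}$. To conclude $r$-vision from $e_2=\{v_2,h_1\}$ you need $\{v_2,h_0\}\in E(G')$, and the only leverage is that $v_2$ is joined to $h_3$ and $h_4$. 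But $N_{G'}(v_1)$ is a path in $T'_H$ containing $h_0,h_1,h_3,h_4$, and nothing forces $h_0$ to lie between $h_3$ and $h_4$; if it lies outside that interval, $v_2$ need not reach $h_0$ at all. Your ``geometric engine'' paragraph implicitly assumes $\cap e_0$ sits inside the rectangle spanned by the witnessing $C_4$, which is exactly the point at issue.

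The paper closes this gap not by strengthening the subclaim but by choosing $e_1$ more carefully: it picks $e_1\in E(M')$ with $r$-vision of $e_0$ that \emph{minimises} $\mathrm{dist}_{T'_H}(h_0,h_1)+\mathrm{dist}_{T'_V}(v_0,v_1)$. With this choice, if $h_0$ were not between $h_3$ and $h_4$ then (say) $h_3$ would lie strictly between $h_0$ and $h_1$ on the path $N_{G'}(v_1)$; one checks that $\{v_1,h_3\}\in E(M')$ still has $r$-vision of $e_0$ and is strictly closer, contradicting minimality. The same minimality is used symmetrically in the other two clauses of \Fref{def:dominance}. Your internal-edge case also diverges from the paper: rather than ``walking a chain'' of internal edges, the paper observes directly that $\cap e_0\subset D_M$, picks $h_3,h_4\in M'_H\cap N_{G'}(v_0)$ adjacent in the $R$-tree on $M'_H$ with $h_0$ between them, and then carries out a nontrivial verification (using the path structure of $N_{M'}(h_3)\cap N_{M'}(h_4)$) that the $e_2$ supplied by Property~3 actually sees $e_0$; your ``chase further or switch to the $M'_V$-side'' fallback is not a substitute for this argument.
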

\begin{proof}
	Let $e_0=\{v_0,h_0\}\in E(G')$ be an arbitrary edge. By~\Fref{claim:conn_rvis}, there exists an edge
	$e_1=\{v_1,h_1\}\in E(M')$ which has $r$-vision of $e_0$, and we also \textcolor{highlightnew}{suppose that $e_1$ is chosen so that $\mathrm{dist}_{T'_H}(h_0,h_1)+\mathrm{dist}_{T'_V}(v_0,v_1)$ is minimal.}

	\medskip

	Trivially, if $e_1\in Z'$ (for example, if $e_1$ is a \textbf{convex edge} of $M'$), then $e_0$ is $r$-visible from $e_1$. Assume now, that $e_1\notin Z'$.
	\begin{itemize}
		\item If $e_1$ is a \textbf{reflex or side edge} of $M'$, then $\exists e_2=\{v_2,h_2\}\in Z'$ so that $e_2\covers e_1$. We claim that $e_2$ has $r$-vision of $e_0$ in $G'$ (this is the main motivation for \Fref{def:dominance}).
		      \begin{enumerate}
			      \item If $h_1=h_2$: by the choice of $e_1$ and $e_2$, $v_1$ is joined to $h_0,h_3,h_4$ in $G'$. Since $N_{G'}(v_1)$ is the vertex set of a path in $T'_H$, the choice of $e_1$ guarantees that $h_0$ is between $h_3$ and $h_4$, which are neighbors of $v_2$ in $G'$. Therefore $\{ v_2, h_0\}\in E(G')$, so $\{v_0,h_0,v_2,h_1(=h_2)\}$ induces a $C_4$ in $G'$.

			      \item If $v_1=v_2$: the proof proceeds analogously to the previous case.

			      \item If $e_1\cap e_2=\emptyset$: by the choice of $e_1$ and $e_2$, $v_1$ is joined to $h_0,h_3,h_2$ in $G'$, and $v_1$ is joined to $v_0,v_3,v_2$ in $G'$. The choice of $e_1$ guarantees that $h_0$ is between $h_3$ and $h_2$, and that $v_0$ is between $v_3$ and $v_2$. Therefore $\{ v_2\cap h_0\},\{v_0\cap h_2\}\in E(G')$, so $\{v_0,h_0,v_2,h_2\}$ induces a $C_4$ in $G'$.
		      \end{enumerate}
		      In any of the three cases, $e_0$ is $r$-visible from $e_2$ in $G'$.

		\item If $e_1$ is an \textbf{internal edge} of $M'$, then $\cap e_0\subset D_M$, so $\cap e_0$ is in a rectangle corresponding to a $C_4$ of $M'$. \color{highlightnew} Thus there are two elements $h_3,h_4\in M'_H\cap N_{G'}(v_0)$ such that there does not exist an element of $M'_H$ which is between $h_3$ and $h_4$, but $h_0$ is between $h_3$ and $h_4$ (or is equal to one of them). Because $M'$ is 2-connected, \Fref{claim:treeprop} applies. Let the end-points of the path induced by $N_{M'}(h_3)\bigcap N_{M'}(h_4)$ be $v_3$ and $v_4$.

		      \medskip

		      We claim that the edges of the $C_4$ induced by $\{v_3,h_3,v_4,h_4\}$ are non-internal edges. Take $\{v_3,h_3\}$, for example.
		      \begin{itemize}
			      \item If $v_3$ is an end-point of the path induced by $N_{M'}(h_3)$, then \Fref{claim:D} implies that one of the sides of the pixel $v_3\cap h_3$ is a subset of $\partial D_M$. In other words, $\{v_3,h_3\}$ is a side or a convex edge of $M'$.
			      \item Otherwise, there is a neighbor $v_5$ of $v_3$ in the path induced by $N_{M'}(h_3)$ in the $R$-tree on $M'_H$, such that $v_5\notin N_{M'}(h_4)$. If $\{v_3,h_3\}$ is an internal edge, then $\{v_5,h_4\}\in E(M')$, so $\{v_3,h_3\}$ can only be a reflex edge.
		      \end{itemize}
		      The same reasoning holds for the other three edges induced by $\{v_3,h_3,v_4,h_4\}$. Clearly, $e_0$ is $r$-visible to all four edges; if any of them is a convex edge, we are done.

		      \medskip

		      If, say, $\{v_3,h_3\}$ is a side edge, then $\exists e_2=\{v_2,h_2\}\in Z'$ such that $e_2\rightarrow \{v_3,h_3\}$. Because $v_3$ is an end-point of the path induced by $N_{M'}(h_3)$ in the $R$-tree on $M'_H$, we must have $h_2=h_3$. There are two horizontal slices $h_5,h_6\in M'_H$ which intersect both $v_2$ and $v_3$, and $h_3$ is between them. Both $N_{M'}(v_2)$ and $N_{M'}(v_3)$ are the vertex set of a path in the $R$-tree on $M'_H$, and so is their intersection $N_{M'}(v_2)\cap N_{M'}(v_3)$. It contains the vertices of the path from $h_5$ to $h_6$ through $h_3$, therefore it contains $h_4$ (there is no slice of $M'_H$ between $h_3$ and $h_4$). Thus $e_2$ has $r$-vision of the four induced edges of $\{v_3,h_3,v_4,h_4\}$, and consequently, of $e_0$.

		      \medskip

		      If each of the four induced edges of $\{v_3,h_3,v_4,h_4\}$ are reflex edges, then without loss of generality, we may assume that there $\exists e_2=\{v_2,h_2\}\in Z'$ such that $e_2\rightarrow \{v_3,h_3\},\{v_4,h_3\}$. This implies that $v_2,v_3,v_4\in N_{G'}(h_3)$. If $v_2$ is between $v_3$ and $v_4$ (or is equal to one of them), then $v_2\in N_{G'}(h_4)$, so $e_2$ has $r$-vision of each of the four induced edges of $\{v_3,h_3,v_4,h_4\}$ and of $e_0$.

		      \medskip

		      Suppose now, that $v_2$ is not between $v_3$ and $v_4$, i.e., $v_2\notin N_{M'}(h_3)\cap N_{M'}(h_4)$. Thus $h_2$ is not equal to either $h_3$ or $h_4$, and so cannot be between them. Because $e_2\rightarrow \{v_3,h_3\}$, there is an $h_5$ such that $\{v_3,h_5\}\in E(M')$, and $h_3$ is between $h_2$ and $h_5$ (all of which are joined to $v_3$ in $M'$). By construction, $h_4$ is between $h_2$ and $h_5$. Since $v_2$ is joined to both $h_2$ and $h_5$, it should be joined to $h_4$, a contradiction.
	\end{itemize}
	We have verified the statement in every case, so the proof of this lemma is complete.
\end{proof}

\textcolor{highlightnew}{Observe that if $D_M$ does not contain a ``neck'' (see \Fref{fig:neck}), even the first two properties of a hyperguard are sufficient to prove \Fref{lemma:zprime}.}

\medskip

Notice, that the set of all convex, reflex, and side edges of $E(M')$ form a hyperguard of $M'$. By \Fref{lemma:zprime}, this set is a point guard system of $G'$, and \Fref{claim:tau} implies that its $\tau$-image is a point guard system of $G$. The cardinality of the $\tau$-image of this hyperguard is bounded by $2|V(M)|-4$ (we will see this shortly), which is already a magnitude lower than what the trivial choice of $E(M)$ would give (generally, $|E(M)|$ can be equal to $\Omega(|V(M)|^2)$).

\medskip

Let the number of convex, side, and reflex edges in $M'$ be $c'$, $s'$, and $r'$, respectively. \Fref{claim:D} and \Fref{claim:DM} allow us to count these objects.

\begin{enumerate}
	\item The number of reflex vertices of $D_M$ is equal to $r'$: any reflex vertex is a vertex of a reflex edge, and the way $M'$ and $D_M$ is constructed guarantees that exactly one vertex of the pixel of a reflex edge is a reflex vertex of $D_M$.

	\item The number of convex vertices of $D_M$ is equal to $c'$: any convex vertex is a vertex of the pixel of a convex edge, and the way $D_M$ is constructed guarantees that exactly one vertex of the pixel of a convex edge is a convex vertex.

	\item The cardinality of $V(M')$ is $c'+\frac12s'$: the first and last edge incident to any element of $V(M')$ ordered from left-to-right (for elements of $M'_H$) or from top-to-bottom (for elements of $M'_V$) is a convex or a side edge. Conversely, any convex edge is the first or last incident edge of exactly one element of $M'_H$ and one element of $M'_V$. A side edge is the first or last incident edge of exactly one element of $V(M')$.

	\item For any reflex edge $e_1=\{v_1,h_1\}\in E(M')$, there is exactly one reflex or side edge in $E(M')$ which contains $v_1$ and is in the $\rela$ relation with $e_1$, and the same can be said about $h_1$.

	\item Any side edge $e_1\in E(M')$ is in $\rela$ relation with exactly one reflex or side edge which it intersects. The intersection is the slice in $V(M')$ on which $e_1$ is a boundary edge.
\end{enumerate}


We can now compute the size of the set of all convex, reflex, and side edges of $M'$:
\[ c'+r'+s'=2c'-4+s'=2|V(M')|-4.\]
Furthermore, it is clear that taking the $\tau$-image of this set decreases its cardinality by $2|B_H|+2|B_V|$ (new reflex and side edges are created at both ends of slices in $B_H$ and $B_V$ when splitting them). Thus the cardinality of the $\tau$-image of all convex, reflex, and side edges of $M'$ is at most
\[ 2|V(M')|-4-2|B_H|-2|B_V|= 2|V(M)|-4, \] proving the claim from the previous page. Readers who are only interested in a result which is sharp up to a constant factor, may skip to \Fref{case:conn}. Further analysis of $M'$ allows us to lower the coefficient $2$ to $\frac43$.

\medskip

Define the {\bfseries auxiliary graph \boldmath $X$} as follows: let $V(X)$ be the set of reflex and side edges of $M'$, and let
\[ E(X)=\Big\{\{e,f\}:\ e\neq f,\ e\cap f\neq\emptyset,\ e\rela f\Big\}. \]
By our observations, $X$ is the disjoint union of some cycles and $\frac12s'$ paths. This structure allows us to select a hyperguard which contains a subset of the reflex and side edges of $M'$, instead of the whole set.

\medskip

{\color{highlightnew}
	In the next section, we use the following trivial fact several times.
	\begin{claim}\label{claim:pathdom}
		A path on $k$ nodes has a dominating set of size \[\left\lceil\frac{k}{3}\right\rceil=\left\lfloor \frac{k+2}{3}\right\rfloor.\]
	\end{claim}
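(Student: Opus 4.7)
The plan is to construct a dominating set of the stated size explicitly; no lower bound is needed since the claim only asserts existence. Label the vertices of the path $v_1, v_2, \ldots, v_k$ in order. Write $k = 3q + r$ with $r \in \{0, 1, 2\}$.

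The construction is to pick every third vertex starting from $v_2$: namely $v_2, v_5, v_8, \ldots, v_{3q-1}$, which is a set of $q$ vertices, and then possibly add $v_k$ at the end to handle the leftover. Concretely, each $v_{3i-1}$ for $1 \le i \le q$ dominates the three consecutive vertices $v_{3i-2}, v_{3i-1}, v_{3i}$, so after choosing all of them we have dominated $v_1, \ldots, v_{3q}$. If $r = 0$ we are done with $q = \lceil k/3 \rceil$ vertices. If $r = 1$, the single remaining vertex $v_{3q+1}$ must itself be added (or alternatively, one may shift the last chosen vertex to $v_{3q}$), giving $q+1 = \lceil k/3\rceil$ vertices. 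If $r = 2$, we add $v_{3q+2} = v_k$, which dominates the two leftover vertices $v_{3q+1}$ and $v_{3q+2}$, yielding $q+1 = \lceil k/3\rceil$ vertices.

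The equality $\lceil k/3 \rceil = \lfloor (k+2)/3 \rfloor$ is a standard identity: for any integer $k$ and positive integer $m$, $\lceil k/m \rceil = \lfloor (k + m - 1)/m \rfloor$, and applying this with $m = 3$ gives the claim.

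There is no real obstacle here; the argument is routine, and the only thing worth verifying carefully is that the three cases for $r$ each produce a valid dominating set of the correct size.
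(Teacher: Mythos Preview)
Your proof is correct. The paper does not actually prove this claim at all; it is stated as a ``trivial fact'' and used without justification, so your explicit construction is more than what the paper provides.
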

}

\subsubsection{Constructing a hyperguard \texorpdfstring{\boldmath $Z'$ of $M'$}{𝑍' of 𝑀'}.}\label{sec:constructing}  We will define  ${(Z'_j)}_{j=0}^\infty$, a sequence of (set theoretically) increasing sequence of subsets of $E(M')$, and ${(X_j)}_{j=0}^\infty$, a decreasing sequence of induced subgraphs of $X$.

\medskip

Additionally, we will define a function $w_j:V(X)\to \{0,1,2\}$, and extend its domain to any subgraph $H\subseteq X$ by defining $w_j(H)=\sum_{e\in V(H)}w_j(e)$. The purpose of $w_j$, very vaguely, is that as $Z'$ will contain every third node of $X$, we need to keep count of the modulo 3 remainders. Furthermore, $w_j$ serves as buffer in a(n implicitly defined) weight function (see \fref{ineq:recursion}).

\medskip

For a set $E_0\subseteq E(X)$, let the indicator function of $E_0$ be
\[
	\mathds{1}_{E_0}(e)=\left\{
	\begin{array}{ll}
		1,\quad & \text{if }e\in E_0,               \\
		0,\quad & \text{if }e\in E(X)\setminus E_0.
	\end{array}
	\right.
\]

\medskip

Let $Z'_0=\emptyset$ and $X_0=X$. By our previous observations, $X$ does not contain isolated nodes. Define $w_0:V(X)\to \{0,1,2\}$ such that
\[
	w_0(e)=\left\{
	\begin{array}{ll}
		1,\quad & \text{if }d_{X_0}(e)=1,             \\
		0,\quad & \text{if }d_{X_0}(e)=0\text{ or }2.
	\end{array}
	\right.
\]

\medskip

In the $j^\text{th}$ step, we will define $Z'_j$, $X_j$, and $w_j$ so that
\begin{itemize}
	\item $Z'_{j-1}\subseteq Z'_{j}$, $X_{j}\subseteq X_{j-1}$,
	\item $\{e\in V(X_j)\ |\ d_{X_j}(e)=1\}\subseteq w^{-1}_j(1)$,
	\item $\{e\in V(X_j)\ |\ d_{X_j}(e)=0\}=w^{-1}_j(2)$, and
	\item $\forall e_0\in V(X)\setminus V(X_j)$, either $e_0\in Z'_j$, or $\exists e_1\in Z'_j$ so that $e_1\covers e_0$.
\end{itemize}
If these hold, then for any path component $P_j$ in $X_j$, we have $w_j(P_j)\ge 2$.

\medskip

{\color{highlightnew} As $j$ increases, the construction goes through 5 phases. In each of Phases 2-4, $j$ is incremented for multiple iterations, until $X_j$ satisfies some predefined condition. The different phases and the relevant parts of $D_M$ are depicted on \Fref{fig:phases}.}

\begin{figure}[H]
	\centering

	\begin{subfigure}{\textwidth}
		\centering
		\begin{tikzpicture}[scale=0.75]
			\begin{scope}
				\draw[very thick] (0,0) -- (0,4);
				\draw[very thick] (10,0) -- (10,1) -- (12,1) -- (12,4);
				\draw[very thick] (3,0) -- (3,1) -- (4,1) -- (4,0);
				\draw[very thick] (7,4) -- (7,3) -- (8,3) -- (8,4);

				\def\hpat{north east lines}
				\def\vpat{north west lines}

				\fill[pattern=\hpat] (0,1) rectangle (12,3);

				\draw[thin] (0,2) -- ++(12,0);

				\fill[pattern=\vpat] (0,0) rectangle ++(1,4);
				\fill[pattern=\vpat] (2,0) rectangle ++(1,4);
				\fill[pattern=\vpat] (4,0) rectangle ++(1,4);
				\fill[pattern=\vpat] (6,0) rectangle ++(1,4);
				\fill[pattern=\vpat] (8,0) rectangle ++(2,4);
				\fill[pattern=\vpat] (11,1) rectangle ++(1,3);
				
				\draw[thin] (9,0) -- ++(0,4);

				\draw [decorate,thick,decoration={brace,amplitude=10pt}]
				(-0.1,1) -- (-0.1,3);

				\draw (-0.5,2) node[anchor=east] {$B_H\ni h=h'\cup h''$};
				\draw (12,2.5) node[anchor=west] {$h'\in B_H'$};
				\draw (12,1.5) node[anchor=west] {$h''\in B_H'$};

				\foreach \x/\y/\z in {0/2/$e_1$,0/1/$e_2$,2/1/$e_3$,4/1/$e_4$,6/2/$e_5$,8/2/$e_6$,9/1/$e_7$,11/2/$e_8$,11/1/$e_9$}
				\draw (\x+0.5,\y+0.5) node[rectangle,rounded corners=3pt,fill=white,text opacity=1,fill 	opacity=1,inner sep=2pt] {\z};

			\end{scope}
		\end{tikzpicture}
		\caption{\Fref{phase:initial}: handling the new reflex and side edges created on the refined slices. We have $e_1,e_2\in S'$, $e_9\in C'$, $e_8\in U'$, and $e_3\in Q'$.}\label{fig:phase1}
	\end{subfigure}

	\bigskip

	\begin{subfigure}{0.5\textwidth}
		\flushleft
		\begin{tikzpicture}[scale=0.75]
			\begin{scope}
				\draw[very thick] (3,2) -- (3,1) -- (4,1);
				\draw[very thick] (-1,1) -- (0,1) -- (0,2);
				\draw[very thick] (-1,-2) -- (0,-2) -- (0,-3);

				\draw[very thick] (4,-5) -- (3,-5) -- (3,-6);
				\draw[very thick] (6,-3) -- (6,-2) -- (7,-2);

				\def\hpat{north east lines}
				\def\vpat{north west lines}
				\fill[pattern=\hpat] (-1,0) rectangle (4,1);
				\fill[pattern=\hpat] (-1,-2) rectangle (7,-1);
				\fill[pattern=\hpat] (1,-5) rectangle (4,-4);
				\fill[pattern=\vpat] (0,2) rectangle (1,-3);
				\fill[pattern=\vpat] (2,2) rectangle (3,-6);
				\fill[pattern=\vpat] (5,-3) rectangle (6,0);

				\foreach \x/\y/\z in {0/-2/$e_1$,0/0/$e_2$,2/0/$e_3$,2/-2/$f$,2/-5/$e_4$,5/-2/$e_{2k}$}
				\draw (\x+0.5,\y+0.5) node[rectangle,rounded corners=3pt,fill=white,text opacity=1,fill 	opacity=1,inner sep=2pt] {\z};

			\end{scope}
		\end{tikzpicture}
		\caption{\Fref{phase:cycle}: cutting cycles}\label{fig:phase2}
	\end{subfigure}%
	\begin{subfigure}{0.5\textwidth}
		\flushright
		\begin{tikzpicture}[scale=0.75]
			\begin{scope}

				\def\hpat{north east lines}
				\def\vpat{north west lines}

				\fill[pattern=\hpat] (-1,0) rectangle (4,1);
				\fill[pattern=\hpat] (-1,-2) rectangle (7,-1);
				\fill[pattern=\hpat] (1,-5) rectangle (4,-4);
				\fill[pattern=\vpat] (0,2) rectangle (1,-3);
				\fill[pattern=\vpat] (2,2) rectangle (3,-5);
				\fill[pattern=\vpat] (5,-3) rectangle (6,0);
				\fill[white] (2,-5) rectangle (3,-6);

				\foreach \x/\y/\z in {0/-2/$e_4$,0/0/$e_3$,2/0/$e_2$,2/-2/$f$,2/-5/$e_1$,5/-2/$e_{5}$}
				\draw (\x+0.5,\y+0.5) node[rectangle,rounded corners=3pt,fill=white,text opacity=1,fill 	opacity=1,inner sep=2pt] {\z};

				\draw[very thick] (3,2) -- (3,1) -- (4,1);
				\draw[very thick] (-1,1) -- (0,1) -- (0,2);
				\draw[very thick] (-1,-2) -- (0,-2) -- (0,-3);

				\draw[very thick] (1,-5) -- (4,-5);
				\draw[very thick] (6,-3) -- (6,-2) -- (7,-2);

			\end{scope}
		\end{tikzpicture}
		\caption{\Fref{phase:selfintersect}: cutting self-intersecting paths}\label{fig:phase3}
	\end{subfigure}

	\bigskip

	\begin{subfigure}{\textwidth}
		\centering
		\begin{tikzpicture}[scale=0.75]
			\begin{scope}
				\draw[very thick] (-1,0) -- (0,0) -- (0,1) -- (-2,1) -- (-2,3) -- (-1,3) -- (-1,5) -- (0,5) -- (0,6) -- (3,6) -- (3,3) -- (4,3) -- (4,6) -- (7,6) -- (7,5) -- (8,5) -- (8,3) -- (9,3) -- (9,1)  -- (7,1)  -- (7,0) -- (8,0);

				\def\hpat{north east lines}
				\def\vpat{north west lines}
				\fill[pattern=\hpat] (-2,1) rectangle (9,2);
				\fill[pattern=\hpat] (-1,-1) rectangle (8,0);
				\fill[pattern=\vpat] (0,6) rectangle (1,-2);
				\fill[pattern=\vpat] (6,6) rectangle (7,-2);

				\fill[pattern=\hpat] (-1,4) rectangle (3,5);
				\fill[pattern=\hpat] (4,4) rectangle (8,5);
				\fill[pattern=\vpat] (2,6) rectangle (3,3);
				\fill[pattern=\vpat] (4,6) rectangle (5,3); 

				\foreach \x/\y/\z in {-3/1/$h_3$,-2/-1/$h_4$,0/-3/$v_3$,6/-3/$v_4$,2/4/$e_1$,0/4/$e_2$,0/1/$e_3$,6/1/$e_4$,6/4/$e_5$,4/4/$e_6$}
				\draw (\x+0.5,\y+0.5) node[rectangle,rounded corners=3pt,fill=white,text opacity=1,fill 	opacity=1,inner sep=2pt] {\z};
			\end{scope}
		\end{tikzpicture}
		\caption{\Fref{phase:3rdproperty}: covering necks. Some slices are not shown or drawn completely to avoid clutter. The set $\{e_i\ :\ i=1,\dots,6\}$ induces a path in $X$, and $\{e_2,e_5\}$ is its minimum dominating set.}\label{fig:phase4}
	\end{subfigure}%

	\caption{Demonstrating possible substructures of $X$ which are handled in Phases 1-4.}\label{fig:phases}
\end{figure}
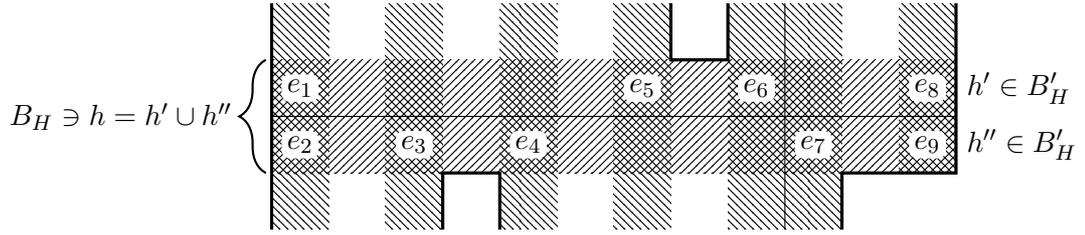
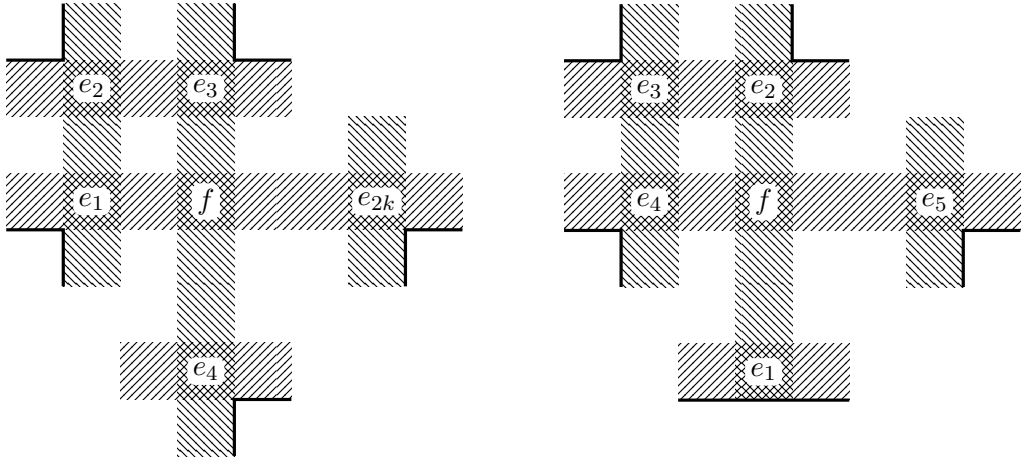
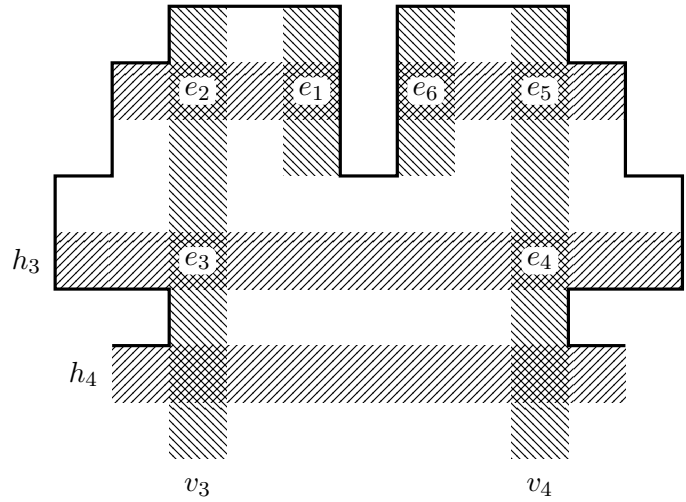

\phase{}\label{phase:initial} Let the set of convex edges of $M'$ be $C'$.  Let
\begin{align*}
	S' & =\Big\{ e\in V(X):\ \tau(e)\text{ is a side edge}\Big\},                                                                                \\
	U'&=V(X)\bigcap \tau^{-1}(\tau(C'))                                                                                                      \\
	W' & = \bigcup_{\substack{e_1,e_4\in S'\cup U'                                                                                               \\ e_2,e_3\in V(X) \\ e_1\rela e_2, e_2\rela e_3, e_3\rela e_4}} \{e_1,e_2,e_3,e_4\}.
\end{align*}
For an edge $e=(h,v)\in E(M')$, let 
\begin{align*}
\eta_H(e)&=\tau^{-1}(\tau(e))\cap \left(M'_H\times \{v\}\right),\\
\eta_V(e)&=\tau^{-1}(\tau(e))\cap \left(\{h\}\times M'_V\right),
\end{align*}
and these functions act on subset of $E(M')$ element-wise.
Now we are ready to define a few more sets:
\begin{align*}
	Q_H'=&\Big\{ f\in V(X):\ \exists e\in S'\ f\rela e,\ f\cap e\in M'_H,\  \eta_H(f)\setminus \{f\}\covers N_X(\eta_H(e))\setminus \{f\} \Big\}, \\
	Q_V'=&\Big\{ f\in V(X):\ \exists e\in S'\ f\rela e,\ f\cap e\in M'_V,\  \eta_V(f)\setminus \{f\}\covers N_X(\eta_V(e))\setminus \{f\} \Big\}, \\
	Q'=&Q'_H\bigcup Q'_V.
\end{align*}
{\color{highlightnew} The reader is advised to look at \Fref{fig:phase1} to visualize the corresponding pixels.} Take
\begin{align*}
	Z'_1&=\eta_H(C'\cup Q')\bigcup \eta_V(C'\cup Q'),                                                                                                                         \\
	X_1    & =X -Q'-N_X(Q') -U'-N_X(U'),                                                                                                                                              \\
	w_1    & =w_0-\mathds{1}_{S'}-\mathds{1}_{U'}+\sum_{f\in Q'}\mathds{1}_{N_X(N_X(f))\setminus \{f\}\setminus W'}+\sum_{e\in U'}\mathds{1}_{N_X(N_X(e))\setminus\{e\}\setminus W'}.
\end{align*}

\medskip

\phase{}\label{phase:cycle} Suppose $e_1,e_2,e_3,\ldots,e_{2k_j}$ is a cycle in $X_j$ ($k_j\ge 2$, $j\ge 1$). This set of nodes of $X_j$ is the edge set of a \textbf{circuit} of length $2k_j$ in $M'$. Join the centroids of the pixels of edges that are in the $\leftrightarrow$ relation. Observe, that because we split the elements of $B_H$ horizontally and the elements of $B_V$ vertically, this curve either always turns left, or always turns right. Without loss of generality, we may suppose that $e_1,e_2,\ldots,e_{2l}$ is the shortest cycle in $M'$ formed by an interval of edges of the circuit (in cyclic order).
\begin{itemize}
	\item If $l=2$, then $e_1\leftrightarrow e_2\leftrightarrow e_3\leftrightarrow e_4$. Take 
	\begin{align*}
	Z'_{j+1} & =\{e_{2k_j}\}\bigcup Z'_j,     \\
	X_{j+1}  & =X_j-\{e_{2k_j-1},e_{2k_j},e_1,e_2,e_3\}, \\
	w_{j+1}  & = \left\{\begin{array}{ll}
	w_j & \text{ if }2k_j=4, \\
	w_j+\mathds{1}_{e_4}+\mathds{1}_{e_{2k_j-2}} & \text{ if }2k_j>4.
	\end{array}\right.
	\end{align*}
	By definition, $e_{2k_j}\rightarrow e_{2k_j-1},e_1$. Because the curve connecting the centroids always turns in the same direction, we have $e_{2k_j}\rightarrow e_2,e_3$, too.
	\item If $l>2$, the chordal bipartiteness of $M'$ implies that there exists $1<b<2l$ and a chord $f$ which forms a cycle with $e_{b-1},e_b,e_{b+1}$. Check that $f\rightarrow e_b$ holds.
	
	\medskip
	
	If the pixel of $f$ is between the pixels of $e_{b-1}$ and $e_{b-2}$ and is also between the pixels of $e_{b+1}$ and $e_{b+2}$, then $f\rightarrow e_{b-2},e_{b-1},e_{b+1},e_{b+2}$, so we may take
	\begin{align*}
	Z'_{j+1} & =\{f\}\bigcup Z'_j,     \\
	X_{j+1}  & =X_j-\{e_{b-1},e_{b-1},e_b,e_{b+1},e_{b+2}\}, \\
	w_{j+1}  & =w_j+\mathds{1}_{e_{b-3}}+\mathds{1}_{e_{b+3}},
	\end{align*}
	where the indices are taken cyclically.
	
	\medskip
	
	Otherwise $f$ is not, say, between the pixels of $e_{b-1}$ and $e_{b-2}$. Let $$g=(e_{b-2}\setminus e_{b-1})\bigcup (e_b\cap e_{b+1}).$$
	Indeed, $g$ is a chord, since the pixel of $e_{b-2}$ is between the pixels of $e_{b-1}$ and $f$. Now the pixel of $g$ is between the pixels of $e_{b-3}$ and $e_{b-2}$, and also between the pixels of $e_{b}$ and $e_{b+1}$. The argument of the previous case now goes through, so take
	\begin{align*}
	Z'_{j+1} & =\{g\}\bigcup Z'_j,     \\
	X_{j+1}  & =X_j-\{e_{b-3},e_{b-2},e_{b-1},e_b,e_{b+1}\}, \\
	w_{j+1}  & =w_j+\mathds{1}_{e_{b-4}}+\mathds{1}_{e_{b+2}}.
	\end{align*}
	(Again, indices of $e$ are taken cyclically.)
\end{itemize}

By iterating the above operation, eventually we reach an index $j_1$ for which $X_{j_1}$ is cycle-free ($|V(X_j)|$ decreases with every iteration).

\medskip

\phase{}\label{phase:selfintersect} This phase is very similar to the part of Phase~2 corresponding to $l\ge 2$. Take a path component $e_1,e_2,e_3,\ldots,e_{k_j}$ in $X_j$ ($k_j\ge 1$, $j\ge j_1$) such that
\[ E\left(M'\left[\bigcup_{i=2}^{k-1} e_i\right]\right)\setminus \{e_2,\ldots e_{k-1}\}\neq\emptyset.\]
The set $\{e_1,\ldots,e_{k_j}\}$ is the edge set of a \textbf{walk} of length $k_j$ in $M'$. Join the centroids of the pixels of these edges that are in the $\leftrightarrow$ relation. Again, this curve either always turns left, or always turns right. Using the bichordality of $M'$, there exists a chord $f\in E(M')$ which forms a $C_4$ with $\{e_{b-1},e_b,e_{b+1}\}$, where $3\le b\le k-2$.

\medskip

As before, depending on where the pixel of $f$ is, the new element added to $Z'_{j}$ is either $f$ or another chord $g$. If the pixel of $f$ is between the pixels of $e_{b-1}$ and $e_{b-2}$ and is also between the pixels of $e_{b+1}$ and $e_{b+2}$, then we may take
\begin{align*}
Z'_{j+1} & =\{f\}\bigcup Z'_j,                                   \\
X_{j+1}  & =X_j-\{e_{b-2},e_{b-1},e_{b},e_{b+1},e_{b+2}\},       \\
w_{j+1}  & =w_j+\mathds{1}_{\{\mathrm{dist}_X(\bullet,e_b)=3\}}.
\end{align*}

\medskip

Otherwise $f$ is not, say, between the pixels of $e_{b-1}$ and $e_{b-2}$ (so $e_{b-2}$ is a reflex edge thus $b\ge 4$). Let $$g=(e_{b-2}\setminus e_{b-1})\bigcup (e_b\cap e_{b+1}).$$
Indeed, $g$ is a chord, since the pixel of $e_{b-2}$ is between the pixels of $e_{b-1}$ and $f$. Now the pixel of $g$ is between the pixels of $e_{b-3}$ and $e_{b-2}$, and is also between the pixels of $e_{b}$ and $e_{b+1}$. The argument of the previous case now goes through, so take
\begin{align*}
Z'_{j+1} & =\{g\}\bigcup Z'_j,                                   \\
X_{j+1}  & =X_j-\{e_{b-3},e_{b-2},e_{b-1},e_{b},e_{b+1}\},       \\
w_{j+1}  & =w_j+\mathds{1}_{\{\mathrm{dist}_X(\bullet,e_{b-1})=3\}}.
\end{align*}
Since the number of nodes in $X_j$ decreases with every iteration of this method, there is a $j_2$ for which $X_{j_2}$ becomes free of the above defined paths.

\medskip

\phase{}\label{phase:3rdproperty} The set $M'_H$ is the subset of the nodes of a horizontal $R$-tree of $D$. Let $h_\text{root}\in M'_H$ be a horizontal slice whose top side has maximal $y$-coordinate (so only convex and side edges are incident to it in $M'$). Process the elements of $M'_H$ in decreasing distance (measured in the horizontal $R$-tree) from $h_\text{root}$.

\medskip

Let $h_3\in M'_H$ is the next horizontal slice to be processed. Let $h_4\in M'_H$ be the neighbor of $h_3$ on the path between $h_3$ and $h_\text{root}$. Because $M'$ is 2-connected, the path induced by $N_{M'}(h_3)\bigcap N_{M'}(h_4)$ contains at least two nodes; let the end-points of the path be $v_3$ and $v_4$. As it is shown in \Fref{lemma:zprime}, in this case the edges of the cycle $\{v_3,h_3,v_4,h_4\}$ are non-internal edges of $M'$. If not each of them is a reflex edge, continue this phase with the next horizontal slice. Suppose now, that all four edges of the cycle are reflex edges of $M'$.

\medskip

If $\{v_3,h_3\}$ and $\{v_4,h_3\}$, or $\{v_3,h_4\}$ and $\{v_4,h_4\}$ are removed in \Fref{phase:cycle}~or~\Fref{phase:selfintersect} in one iteration, then the edge by which $Z'$ is extended in the same step satisfies the~\ref{item:internal}\textsuperscript{rd} property of hyperguards for $Z'$ and $h_3,h_4$, and we skip to the next horizontal slice to be processed.
{\color{highlightnew} It is also quite possible, however, that $\{v_3,h_3\}$ and $\{v_4,h_3\}$ are removed in different iterations of the previous phases; this case, among others, is handled in the following paragraphs.}

\medskip

If $\{\{v_3,h_3\},\{v_4,h_3\}\}\cap V(X_j)$ is non-empty, take the path component $P_j$ of $X_j$ containing this set; otherwise let $P_j$ be the empty graph. Because of \Fref{phase:selfintersect}, the path traced out by connecting the centroids of the pixels corresponding to the nodes of $P_j$ is without self-intersection. This implies that for any node $e\in V(P)$, its horizontal slice $e\cap M_H$ is at least as far away from the root as $h_3$. {\color{highlightnew} See \Fref{fig:phase4}, for example.}

\medskip

Split the path $P_j$ into two components $P_{j,1}$ and $P_{j,2}$ by deleting $\left\{\{v_3,h_3\},\{v_4,h_3\}\right\}$ (if it it is not in $E(P_j)$, then one of the components is empty, and the other is $P_j$), so that $\{v_3,h_3\}\notin V(P_{j,2})$ and $\{v_4,h_3\}\notin V(P_{j,1})$.

\begin{itemize}
	\item
	      If $|V(P_{j,1})|\not\equiv 0\pmod 3$ or $|V(P_{j,2})|\not\equiv 0\pmod 3$, then let $Y_j$ be a minimum size dominating set of $P_j$ containing $\{v_3,h_3\}$ or $\{v_4,h_3\}$ (the size of $Y_j$ is estimated in \Fref{sec:estimating}). Set
	      \begin{align*}
		      Z'_{j+1}   & =Y_j\bigcup Z'_j, \\
		      X_{j+1}    & =X_j-P_j,         \\
		      w_{j+1}(e) & =\left\{
		      \begin{array}{ll}
			      0,\quad     & \text{if }e\in V(P_j),    \\
			      w_j(e)\quad & \text{if }e\notin V(P_j).
		      \end{array}
		      \right.
	      \end{align*}
	      Clearly, one of $\{v_3,h_3\}$ and $\{v_4,h_3\}$ is contained in $Y_j\subset Z'_{j+1}\subseteq Z'$, and it satisfies the~\ref{item:internal}\textsuperscript{rd} property of hyperguards for $Z'$ and $h_3,h_4$.
	\item
	      If $|V(P_{j,1})|\equiv |V(P_{j,2})|\equiv 0\pmod 3$, then let $Y_j$ be a minimal dominating set of $P_j$. Moreover, if $\{\{v_3,h_4\},\{v_4,h_4\}\}\bigcap (V(X_j)\bigcup Z'_j)$ is non-empty, let $f_j$ be an element of it, otherwise set $f_j=\{v_3,h_4\}$.
	      Take
	      \begin{align*}
		      Z'_{j+1}   & =Y_j\bigcup \{f_j\}\bigcup Z'_j,   \\
		      X_{j+1}    & =X_j-P_j-\{f_j\}-N_{X_j}(\{f_j\}), \\
		      w_{j+1}(e) & =\left\{
		      \begin{array}{ll}
			      0,\quad        & \text{if }e\in V(P_j)\bigcup \big\{\{v_3,h_4\},\{v_4,h_4\}\big\}, \\
			      w_j(e)+1,\quad & \text{if }\mathrm{dist}_{X}(e,f_j)=2,                             \\
			      w_j(e)\quad    & \text{otherwise.}
		      \end{array}
		      \right.
	      \end{align*}
	      Observe, that $f_j$ satisfies the~\ref{item:internal}\textsuperscript{rd} property of hyperguards for $Z'$ and $h_3,h_4$.
\end{itemize}

In any case, some element of $Z'_{j+1}\subseteq Z'$ satisfies the~\ref{item:internal}\textsuperscript{rd} property of hyperguards for $Z'$ and $h_3,h_4$.

\medskip

\phase{}\label{phase:path} Lastly, we get $X_{j_3}$ which is the disjoint union of paths and isolated nodes (or it is an empty graph). Take a component $P_j$ of $X_j$ (for some $j\ge j_3$). Let $Y_j$ be a dominating set of $P_j$ (if $|V(P_j)|=1$, then $Y_j=V(P_j)$). Take
\begin{align*}
	Z'_{j+1}   & =Y_j\bigcup Z'_j, \\
	X_{j+1}    & =X_j-P_j,         \\
	w_{j+1}(e) & =\left\{
	\begin{array}{ll}
		0,\quad     & \text{if }e\in V(P_j),    \\
		w_j(e)\quad & \text{if }e\notin V(P_j).
	\end{array}
	\right.
\end{align*}

\medskip

By repeating this procedure, eventually $X_{j_4}$ is the empty graph for some $j_4\ge j_3$.

\medskip

Let $Z'=Z'_{j_4}$. This whole procedure is orchestrated in a way to guarantee that $Z'$ is a hyperguard of $M'$, so only an upper estimate on the cardinality of $\tau(Z')$ needs to be calculated to complete the proof of \Fref{case:2conn}.

\subsubsection{Estimating the size of \texorpdfstring{\boldmath $Z=\tau(Z')$}{𝑍=𝜏(𝑍')}.}\label{sec:estimating}
We have
\begin{align*}
	|V(X_0)|&=r'+s',\\
	w_0(X)&=s', \\
	|B'_H|+|B'_V|&=|U'|+|Q'_H|+|Q'_V|.
\end{align*}

By definition, 
\[|Z_1'|=c'+|U'|+2|Q'_H|+2|Q'_V|-|Q'_H\cap Q'_V|=c'+|B'_H|+|B'_V|+|Q'|,\]
\[\tau(Z'_1)|\le |Z'_1|-|B'_H|-|B'_V|.\]
Again, via the definitions one gets, a bit tediously, but relatively simply that:
\begin{align*}
|V(X_1)|+w_1(X)+2|U'|+5|Q'\setminus Q'_H|+5|Q'\setminus Q'_V|+9|Q'_H\cap Q'_V|\le |V(X_0)|+w_0(X) \\
|V(X_1)|+w_1(X)+2|U'|+5|Q'|+4|Q'_H\cap Q'_V|\le |V(X_0)|+w_0(X) \\
|V(X_1)|+w_1(X)+2|B'_H|+2|B'_V|+3|Q'|+2|Q'_H\cap Q'_V|\le |V(X_0)|+w_0(X)
\end{align*}
We gain the coefficient 9 because each element of $Q'_H\cap Q'_V$ covers a path component of 3 nodes and 4 other nodes of $X$ (however, even a coefficient of 7 is sufficient to complete the proof).

\medskip

All in all, we have
\begin{align}
	|Z'_1| & + \frac{|V(X_1)|+w_1(X)}{3}\le \nonumber \\
	&\le c'+|B'_H|+|B'_V|+|Q'|+\frac{|V(X_1)|+w_1(X)}{3}\le \nonumber \\
	& \le c'+\frac{|V(X_0)|+ w_0(X)+|B'_H|+|B'_V|}{3}\le \label{ineq:Z1}             \\
	& \le c'+\frac{r'+2s'+|B'_H|+|B'_V|}{3}.\nonumber
\end{align}

We now show that
\begin{align}
	|Z'_{j+1}| & +\frac{|V(X_{j+1})|+w_{j+1}(X)}{3}\le
	|Z'_{j}|+\frac{|V(X_{j})|+w_{j}(X)}{3}.\label{ineq:recursion}
\end{align}
holds for any $j\ge 1$.

\medskip

In \Fref{phase:cycle}, we choose a node from each cycle of $X_1$. \Fref{ineq:recursion} is preserved, since
\begin{align*}
	|Z'_{j+1}|   & =|Z'_j|+1,                                  \\
	|V(X_{j+1})| & =|V(X_j)|-5+\mathds{1}_{\{4\}}(k_j),        \\
	w_{j+1}(X)   & \le w_j(X)+2-2\cdot\mathds{1}_{\{4\}}(k_j).
\end{align*}

In \Fref{phase:selfintersect}, for every $j_2>j\ge j_1$, we have
\begin{align*}
	|Z'_{j+1}|   & =|Z'_j|+1,       \\
	|V(X_{j+1})| & =|V(X_{j_1})|-5, \\
	w_{j+1}(X)   & \le w_{j}(X)+2.
\end{align*}

Next, we analyze \Fref{phase:3rdproperty}. Let $j_3>j\ge j_2$. If $|V(P_{j,1})|\not\equiv 0\pmod 3$ and $|V(P_{j,2})|\not\equiv 2\pmod 3$, then take a minimum size dominating set of $P_j$ containing $\{v_1,h_1\}$. Using \Fref{claim:pathdom}, we have
\begin{align*}
	|Y_j| & \le 1+\left\lceil\frac{|V(P_{j,1})|-2}{3}\right\rceil+\left\lceil\frac{|V(P_{j,2})|-1}{3}\right\rceil \le \\
	      & \le 1+\frac{|V(P_{j,1})|-1}{3}+\frac{|V(P_{j,2})|}{3}= \frac{|V(P_j)|+2}{3}.
\end{align*}
Similarly, if $|V(P_{j,1})|\not\equiv 2\pmod 3$ and $|V(P_{j,2})|\not\equiv 0\pmod 3$, then there is a small dominating set of $P_j$ containing $\{h_1,v_2\}$. Also, if both $|V(P_{j,1})|\equiv 2\pmod 3$ and $|V(P_{j,2})|\equiv 2\pmod 3$ hold, then there is a small dominating set of $P_j$ containing $\{h_1,v_2\}$.
Thus, if $|V(P_{j,1})|\not\equiv 0\pmod 3$ or $|V(P_{j,2})|\not\equiv 0\pmod 3$, then
\begin{align*}
	|Z'_{j+1}|   & =|Z'_j|+|Y_j|\le |Z'_j|+\frac{|V(P_j)|+2}{3}, \\
	|V(X_{j+1})| & =|V(X_{j_1})|-|V(P_j)|,                       \\
	w_{j+1}(X)   & \le w_{j}(X)-2.
\end{align*}

\medskip

If both $|V(P_{j,1})|\equiv 0\pmod 3$ and $|V(P_{j,2})|\equiv 0\pmod 3$, then
$|Y_j|=\frac{|V(P_j)|}{3}$. Observe, that
\[ \{h_1,v_1\},\{h_1,v_2\},\{h_2,v_1\},\{h_2,v_2\}\notin V(P_k)\text{ for any }k<j. \]
If both $\{h_1,v_1\}\notin Z'_j$ and $\{h_1,v_2\}\notin Z'_j$, but were removed in different iterations, then when $\{h_1,v_1\}$ is removed in iteration $k$ we must have set $w_k(\{h_1,v_2\})=1$, which is the consequence of the previous observation. Thus, $w_j(\{h_1,v_2\})=1$.
Similarly, we must have $w_j(\{h_1,v_1\})=1$.
This reasoning holds for $\{h_2,v_1\}$ and $\{h_2,v_2\}$, as well.

\medskip

If $P_j$ is not the empty graph or $f_j\in Z(X_j)$, then \fref{ineq:recursion} trivially holds.
If $P_j$ is the empty graph, then $w_j(\{h_1,v_1\})=w_j(\{h_1,v_2\})=1$. If $f_j\in V(X_j)$, these 2 extra weights can be used to compensate for the new degree 1 vertices of $X_{j+1}$. If $f_j\notin Z(X_j)\bigcup V(X_j)$, then even $w_j(\{h_2,v_1\})=w_j(\{h_2,v_2\})=1$, and in total the 4 extra weights compensate for adding $f_j$ to $Z'_{j+1}$.

\medskip

In any case, \fref{ineq:recursion} holds for $j_3>j\ge j_2$.

\medskip

For any $j_4>j\ge j_3$, we have
\[ |Y_j|\le \left\lceil\frac{|V(P_j)|}{3}\right\rceil\le \frac{|V(P_j)|+2}{3} \] and $w_j(P_j)=2$, so \fref{ineq:recursion} holds for $j$.

\subsubsection{Summing it all up.}
By definition, we have
\[ |Z'|=|Z'_{j_4}|,\ X_{j_4}=\emptyset,\ 0\le w_{j_4}(X). \]
\Fref{ineq:recursion} is preserved from \Fref{phase:cycle} up to \Fref{phase:path}, therefore
\[ |Z'|\le |Z'_{j_4}|+\frac{|V(X_{j_4})|+w_{j_4}(X)}{3}\le |Z'_1|+\frac{|V(X_1)|+w_1(X)}{3}. \]

Lastly, using \fref{ineq:Z1}, we get
\begin{align*}
	|Z|&=|\tau(Z')|=|\tau(Z'\setminus Z'_1)|+|\tau(Z'_1)|\le |Z'\setminus Z'_1|+|Z'_1|-|B'_H|-|B'_V|= \\
	 & =|Z'|-|B'_H|-|B'_V|\le c'+\frac{r'+2s'-2|B'_H|-2|B'_V|}{3}=\\
	 &=c'+\frac{(c'-4)+2s'-2|B'_H|-2|B'_V|}{3}=              \\
	 &=\frac{4\left(c'+\tfrac12 s'\right)-4-2|B'_H|-2|B'_V|}{3}=\frac{4|V(M')|-4-2|B'_H|-2|B'_V|}{3}=           \\
	 & =\frac{4|M'_H|+4|M'_V|-4-4|B_H|-4|B_V|}{3}=\frac{4(|M_H|+|M_V|)-4}{3},
\end{align*}
as desired.

\subsection{\texorpdfstring{\boldmath $M$}{𝑀} is connected, but not 2-connected}\label{case:conn}

Let the 2-connected components (or blocks) of $M$ be $M_i$ for $i=1,\ldots,q$. Since induced graphs of $G$ inherit the chordal bipartite property, by~\Fref{case:2conn}, there exists a subset $Z_i\subseteq E(M_i)$, such that for any edge $e_0\in E(G[N_G(M_i)])$, there exists an edge $e_1\in Z_i$ which has $r$-vision of $e_0$ in $G[N(M_i)]$, and $|Z_i|\le \frac43 (|V(M_i)|-1)$. Let $Z=\cup_{i=1}^q Z_i$.

\medskip

Since the intersection graph of the vertex sets of the 2-connected components is a tree (and any two components intersect in zero or one elements), we have
\[ |Z|\le \frac43 \left(-q + \sum_{i=1}^q |V(M_i)| \right)= \frac{4\left(-q + |V(M)| + (q-1)\right)}{3}=\frac{4(|V(M)|-1)}{3}. \]

\medskip

Furthermore, given an arbitrary $e_0=\{v_0,h_0\}\in E(G)$, there exists a $v_1\in M_V$ and an $h_1\in M_H$ such that $\{v_1,h_0\},\{v_0,h_1\}\in E(G)$.
\begin{itemize}
	\item If $v_0\in M_V$ or $h_0\in M_H$, then $\{v_0,h_1\}$ or $\{v_1,h_0\}$ is in $E(M)$.

	\item Otherwise, there exists a path in $M$ whose endpoints are $v_1$ and $h_1$, and this path and the edges $\{v_1,h_0\}$,$\{h_0,v_0\}$,$\{v_0,h_1\}$ form a cycle in $G$.
	      By the bichordality of $G$, there exists a $C_4$ in $G$ which contains an edge of $M$ and $e_0$.
\end{itemize}
In any case, $e_0$ is $r$-visible from some $e_1\in E(M)$. As $e_1$ is an edge of one of the 2-connected components $M_i$, we have $e_0\subset N_G(M_i)$, therefore $e_0\in E(G[N_G(M_i)])$. Thus, some $e_2\in Z_i$ has $r$-vision of $e_0$.

\subsection{\texorpdfstring{\boldmath $M$}{𝑀} has more than one connected component.}\label{case:disconnected}

Let us take a decomposition of $M$ into connected components $M_i$ for $i=1,\ldots,t$.

\medskip

Let $N_i=N(M_i)$, so we have $M_i\subseteq N_i$ and $\cup_{i=1}^t N_i=V(G)$.

\medskip

For all $i>1$ let $q_i$ be the number of components of $G[\cup_{k=1}^{i-1} N_k\setminus\cup_{k=i}^{t}N_k]$ to which $N_i\setminus\cup_{k=i+1}^{t}N_k$ is joined in $G[\cup_{k=1}^i N_k\setminus\cup_{k=i+1}^{t}N_k]$.
Let $F_{i,j}$ be the set of edges joining $N_i\setminus\cup_{k=i+1}^{t}N_k$ to the $j^{th}$ component of $G[\cup_{k=1}^{i-1} N_k\setminus\cup_{k=i}^{t}N_k]$.
Furthermore, let $F_{i,j}^V=\{f\in F_{i,j}\ |\ f\cap A_V\cap N_i\neq \emptyset\}$ and $F_{i,j}^H=\{f\in F_{i,j}\ |\ f\cap A_H\cap N_i\neq \emptyset\}$.

\begin{claim}\label{claim:f1}
	For any two edges $f_1,f_2\in F_{i,j}^V$ either $f_1\cap f_2\neq\emptyset$ or $\exists f_3\in F_{i,j}^V$ such that $f_3$ intersects both $f_1$ and $f_2$. The analogous statement holds for $F_{i,j}^H$.
\end{claim}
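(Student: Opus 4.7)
The plan is to reduce the claim to showing that $\{v_1, h_2\} \in E(G)$ or $\{v_2, h_1\} \in E(G)$ whenever $f_1 = \{v_1, h_1\}$ and $f_2 = \{v_2, h_2\}$ are disjoint edges of $F_{i,j}^V$. Any $f_3 \in F_{i,j}^V$ intersecting both $f_1$ and $f_2$ is a bipartite edge whose $A_V$-endpoint lies in $N_i \setminus \bigcup_{k>i} N_k$ and whose $A_H$-endpoint lies in the $j$-th component; intersecting $f_1$ forces the $A_V$-endpoint to be $v_1$ or the $A_H$-endpoint to be $h_1$, and similarly for $f_2$. Since $v_1 \neq v_2$ and $h_1 \neq h_2$, the only candidates are $f_3 = \{v_1, h_2\}$ and $f_3 = \{v_2, h_1\}$, each of which automatically lies in $F_{i,j}^V$ as soon as it is known to be an edge.

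Next I would assemble a cycle $C$ in $G$ through $f_1$ and $f_2$. Because $h_1$ and $h_2$ lie in one connected component of $G[\bigcup_{k<i} N_k \setminus \bigcup_{k\ge i} N_k]$, pick a shortest path $P_1$ from $h_1$ to $h_2$ inside that component. Because $M_i$ is connected and both $v_1, v_2$ are in $N_i$, pick a shortest path $P_2$ from $v_2$ to $v_1$ whose interior lies in $M_i$. Concatenating $f_1, P_1, f_2, P_2$ produces a simple cycle of even length at least $6$.

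The heart of the argument is a chord analysis on $C$ using bichordality. Several structural facts rule out most candidate chords: the interior of $P_2$ sits inside $M_i$ and hence has no neighbor outside $N_i$, so no chord connects it to $P_1$ or to $\{h_1, h_2\}$; the shortest-path property of $P_1, P_2$ makes them induced; and bipartiteness forbids chords inside a single color class. What survives are the desired diagonals $\{v_1, h_2\}$ and $\{v_2, h_1\}$, together with chords of the form $\{v_\ell, s\}$ with $\ell \in \{1,2\}$ and $s$ an $A_H$-vertex in the interior of $P_1$. A chord of the first kind finishes the proof; a chord of the second kind itself belongs to $F_{i,j}^V$ and produces a strictly shorter cycle of the same shape, setting up an induction on cycle length.

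The main obstacle is closing this induction, since each recursive step only returns an \emph{or}-statement one branch of which is the chord already used. I expect to resolve it via an auxiliary cycle $v_1, s, v_2$ glued to $P_2$: if both $\{v_1, s\}$ and $\{v_2, s\}$ were chords of $C$ and $|P_2| \ge 4$, this auxiliary cycle would have length at least $6$ with no admissible chord by the same rigidity, contradicting bichordality, so necessarily $|P_2| = 2$; a final application of bichordality to the resulting length-$6$ cycle then yields one of $\{v_1, h_2\}$ and $\{v_2, h_1\}$. The analogous statement for $F_{i,j}^H$ follows by swapping the roles of vertical and horizontal throughout.
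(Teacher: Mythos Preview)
Your strategy is the paper's strategy: build the cycle $C$ from $f_1,f_2$, a shortest path $P_2$ through $M_i$ (the paper's $Q_1$), and a shortest path $P_1$ through the $j$-th component (the paper's $Q_2$), then analyse chords. Your reduction of the claim to ``$\{v_1,h_2\}\in E(G)$ or $\{v_2,h_1\}\in E(G)$'' is correct, as is your classification of the admissible chords of $C$, and your auxiliary-cycle argument showing $|P_2|=2$ is exactly the (unwritten) reason behind the paper's assertion that $|V(Q_1)|=3$.

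The gap is in your last sentence. Knowing $|P_2|=2$ together with the two internal chords $\{v_1,s\},\{v_2,s\}$ does \emph{not} produce a length-$6$ cycle whose only chords are the diagonals: the cycle $C$ still has length $|P_1|+4$, and for $|P_1|>2$ bichordality can be satisfied by the very chords $\{v_1,s\},\{v_2,s\}$ you already hold, so your two recursive calls may both return the ``already known'' branch and the induction stalls. Concretely, take $A_V=\{v_1,v_2,d_1,d_2\}$, $A_H=\{m,h_1,h_2,h_3\}$, with edges $v_1m,\,v_2m,\,v_1h_1,\,v_2h_2,\,v_1h_3,\,v_2h_3,\,d_1h_1,\,d_1h_3,\,d_2h_2,\,d_2h_3$. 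This bipartite graph is chordal bipartite, $Q_1=v_1\,m\,v_2$ and $Q_2=h_1\,d_1\,h_3\,d_2\,h_2$ are induced shortest paths with $V(Q_2)$ disjoint from $N(\{m\})$, yet neither $\{v_1,h_2\}$ nor $\{v_2,h_1\}$ is an edge. Hence no argument using only chordal bipartiteness plus minimality of the two paths can finish; the paper's one-line conclusion at this point is just as unsupported as yours. What actually blocks this configuration in the paper's setting is extra structure you have not invoked (for instance, that $M_i$ has at least two vertices, which forces a neighbour of $m$ in $M_i$ and, through the $R$-tree geometry, drags the middle of $Q_2$ into $N_i$).
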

\begin{proof}
	Suppose $f_1$ and $f_2$ are disjoint. Since $M_i$ is connected, there is a path in $G$ whose endpoints are $f_1\cap N_i$ and $f_2\cap N_i$, while its internal points are in $V(M_i)$; let the shortest such path be $Q_1$. There is also a path in the $j^{th}$ component of $G[\cup_{k=1}^{i-1}N_k\setminus \cup_{k=i}^t N_k]$ whose endpoints are $f_1\setminus N_1$ and $f_2\setminus N_i$, let the shortest one be $Q_2$.

	\medskip

	Now $Q_1,f_1,Q_2,f_2$ form a cycle in $G[\cup_{k=1}^i N_k\setminus \cup_{k=i+1}^t N_k]$, which is bipartite chordal. Since $V(Q_2)\cap N_i=\emptyset$, there cannot be a chord between $V(M_i)\cap V(Q_1)$ and $V(Q_2)$. This implies that $|V(Q_1)|=3$ by its choice, and that either $(f_1\cap N_i)\cup(f_2\setminus N_i)$ or $(f_2\cap N_i)\cup (f_1\setminus N_i)$ is a chord.
\end{proof}

\begin{claim}\label{claim:f2}
	For any two edges $f^V\in F_{i,j}^V$ and $f^H\in F_{i,j}^H$, the two-element set
	\[ (f^V\cap N_i)\cup (f^H\cap N_i)\]
	is an edge of $G[N_i]$.
\end{claim}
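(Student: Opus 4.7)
The plan is to build a cycle $C$ in the induced subgraph $G[\bigcup_{k\le i} N_k\setminus \bigcup_{k>i} N_k]$ that uses both $f^V$ and $f^H$, then to shrink it via bichordality until the $v$-to-$h$ portion of the cycle has length $1$. Writing $f^V=\{v,x\}$ with $v=f^V\cap N_i\in A_V$ and $x\in A_H\cap V(C_j)$, and $f^H=\{h,y\}$ with $h=f^H\cap N_i\in A_H$ and $y\in A_V\cap V(C_j)$, I let $P_2$ be a shortest $x$-to-$y$ path in $C_j$ and $Q_1$ a shortest $v$-to-$h$ walk in $G$ whose internal vertices lie in the connected graph $M_i$. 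Such a $Q_1$ exists because $v,h\in N_i=N[M_i]$, so each of $v,h$ either lies in $M_i$ or is adjacent to some vertex of $M_i$; the degenerate case $|V(M_i)|=1$ is immediate, because bipartiteness between $A_V$ and $A_H$ forces $v$ or $h$ to coincide with the unique vertex of $M_i$, and hence $\{v,h\}\in E(G)$ by construction of $F_{i,j}$. Since $V(C_j)\cap N_i=\varnothing$, the vertex sets of $P_2$ and $\{v,h\}\cup V(Q_1)$ are disjoint, so concatenation yields a genuine cycle $C$ of length $|Q_1|+|P_2|+2$ in the stated induced subgraph.

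\medskip

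If $|Q_1|=1$, the conclusion is immediate. Otherwise bipartite parity forces $|Q_1|$ and $|P_2|$ both odd, so $|Q_1|\ge 3$ and $|C|\ge 6$; bichordality, inherited by the induced subgraph, then supplies a chord $e$, and I will case-split on its position. If $e\subseteq\{v,h\}\cup V(Q_1)$, then either $e=\{v,h\}$ and we are done, or $e$ yields a strictly shorter $v$-to-$h$ walk, contradicting the minimality of $Q_1$. If $e\subseteq\{x,y\}\cup V(P_2)$, then $e$ is an edge of the induced subgraph $C_j$ and strictly shortens $P_2$, a contradiction. The remaining crossing case, $e=\{a,b\}$ with $a\in\{v,h\}\cup V(Q_1)$ and $b\in\{x,y\}\cup V(P_2)$, is the main obstacle: the chord splits $C$ into two smaller cycles that each contain only one of $f^V,f^H$, so a naive induction on cycle length fails.

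\medskip

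To resolve the crossing case, I take $C$ to be a \emph{shortest} cycle through both $f^V$ and $f^H$ in the induced subgraph; the two bullets above then rule out all within-arc chords, so every chord of $C$ must cross. The chord $e$ itself lies in $F_{i,j}$, say in $F_{i,j}^V$ (the other subcase is symmetric), so \Fref{claim:f1} applied to $f^V$ and $e$ gives either a shared endpoint in $N_i$ or a common bridge $f_3\in F_{i,j}^V$. In either subcase the shared vertex in $N_i$ (or the vertex in $M_i$ supplied by the intersection of $f_3$ with $f^V$ and $e$) can be spliced into $Q_1$ to produce a strictly shorter cycle that still passes through $f^V$ and $f^H$, contradicting the minimality of $C$. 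Hence $|Q_1|=1$, and $\{v,h\}=(f^V\cap N_i)\cup(f^H\cap N_i)$ is an edge of $G[N_i]$, as required.
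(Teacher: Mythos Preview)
Your first two paragraphs are correct and match the paper's approach (the paper only says ``similar to the proof of Claim~\ref{claim:f1}''): build the cycle $C$ from $Q_1$ (shortest $v$--$h$ path through $M_i$), $f^V$, a shortest $x$--$y$ path $P_2$ in $C_j$, and $f^H$; then observe that within-arc chords are impossible by minimality, and that any crossing chord must have its $Q_1$-side endpoint in $\{v,h\}$, since an edge from an internal vertex of $Q_1$ (which lies in $M_i$) to $V(P_2)$ would place a vertex of $C_j$ in $N_i$.

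The gap is in your third paragraph. If the crossing chord is $e=\{v,b\}$ with $b\in V(P_2)\setminus\{x\}$, then indeed $e\in F_{i,j}^V$, but $e$ and $f^V$ already share the vertex $v$, so Claim~\ref{claim:f1} is satisfied trivially and gives you nothing to splice: $v$ is already an endpoint of $Q_1$, and the only way $e$ shortens the cycle is by replacing $f^V$, which you explicitly want to avoid. (If instead you pass to a globally shortest cycle through $f^V$ and $f^H$, the arcs no longer need to stay in $M_i$ and $C_j$, and then your assertion that the crossing chord lies in $F_{i,j}$ is unjustified.)

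The fix is simply not to insist on keeping $f^V$ and $f^H$ fixed. Stay with the structured cycle ($Q_1$ through $M_i$, $P_2$ through $C_j$). A crossing chord $\{v,b\}$ lets you replace $f^V$ by $\{v,b\}\in F_{i,j}^V$ and $P_2$ by its (still shortest) $b$--$y$ subpath; symmetrically for $\{h,b'\}$. Either move keeps $v$, $h$, and $Q_1$ intact while strictly shortening $P_2$. Iterate until $|P_2|=1$; then the two vertices of $P_2$ lie one in $A_H$ and one in $A_V$, so bipartiteness rules out every crossing chord, and the only possible chord of the cycle is $\{v,h\}$.
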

\begin{proof}
	Similar to the proof of \Fref{claim:f1}.
\end{proof}

Let $f_{i,j}^V\in F_{i,j}^V$ be the element which intersects the maximum number of edges from $F_{i,j}$, and choose $f_{i,j}^H\in F_{i,j}^H$ in the same way. If only one of these exist, let $w_{i,j}$ be the existing one, otherwise let $w_{i,j}=(f_{i,j}^V\cap N_i)\cup (f_{i,j}^H\cap N_i)$ (as in \Fref{claim:f2}). Let us finally define
\[ W=\{w_{i,j}\ |\ i=2,\ldots,t\text{ and }j=1,\ldots,q_i\}.\]

\begin{claim}\label{claim:Wsize}
	$|W|=t-1$.
\end{claim}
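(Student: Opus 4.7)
My plan is to prove the claim in two separate steps: a components-counting argument showing $\sum_{i=2}^t q_i = t-1$, and an injectivity argument showing that the $w_{i,j}$ are pairwise distinct as edges of $G$.

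For the counting step, I would first show that for every $i$ the set $R_i := N_i \setminus \bigcup_{k > i} N_k$ induces a connected subgraph of $G$. The key observation is $M_i \subseteq R_i$: if some $v \in M_i$ lay in $N_k$ for $k \neq i$, then $v$ would be $G$-adjacent to a vertex of $M_k$, and this edge would belong to $M = G[M_V \cup M_H]$, forcing $M_i$ and $M_k$ to be in the same component of $M$ --- a contradiction. Since every vertex of $R_i \setminus M_i$ is $G$-adjacent to $M_i$ and $M_i$ is connected in $G$, $G[R_i]$ is connected. Setting $U_i := \bigcup_{k \leq i} N_k \setminus \bigcup_{k > i} N_k$, one checks $U_i = U_{i-1} \sqcup R_i$, with $U_1 = R_1$ and $U_t = V(G)$ (because $M$ dominates $G$). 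If $c_i$ denotes the number of connected components of $G[U_i]$, then because $G[R_i]$ is connected and attaches to precisely $q_i$ components of $G[U_{i-1}]$, we have the recurrence $c_i = c_{i-1} + 1 - q_i$; starting from $c_1 = 1$ and ending at $c_t = 1$, telescoping gives $\sum_{i=2}^t q_i = t-1$.

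For injectivity, suppose $w_{i,j} = w_{i',j'}$ with $(i,j) \neq (i',j')$ and $i \leq i'$. The case $i < i'$ is handled immediately by the fact that $w_{i,j}$ has an endpoint in $R_i$, while every endpoint of $w_{i',j'}$ lies in $N_{i'}$, and $R_i \cap N_{i'} = \emptyset$ since $R_i$ excludes $\bigcup_{k > i} N_k \supseteq N_{i'}$. In the case $i = i'$, $j \neq j'$, if at least one of the two is a single edge of $F_{i,j}$ or $F_{i,j'}$, it has an endpoint in the corresponding component of $G[U_{i-1}]$, and the vertex-disjointness of distinct components forces the two edges to differ. The remaining sub-case, when both $w_{i,j}$ and $w_{i,j'}$ are constructed via \Fref{claim:f2}, is the delicate one: writing $w_{i,j} = \{v,h\} = w_{i,j'}$ would force both $v \in A_V \cap R_i$ and $h \in A_H \cap R_i$ to have neighbors in each of the two components. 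Forming a shortest cycle in $G$ through $v$, the $j$-th component, $h$, and the $j'$-th component produces a cycle of length at least $6$, and applying the chordal bipartite property of $G$ analogously to \Fref{claim:f1} and \Fref{claim:f2} --- together with the maximality in the definitions of $f_{i,j}^V$ and $f_{i,j}^H$ --- rules out all chord possibilities except those which force $j = j'$ (a chord across the two components would merge them in $G[U_{i-1}]$, and a chord shortening either half would contradict the choice of shortest paths). Combining the two steps yields $|W| = \sum_{i=2}^t q_i = t - 1$; the main obstacle is the bichordality analysis in this last sub-case.
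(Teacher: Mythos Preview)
Your counting argument (each $G[R_i]$ is connected because $M_i\subseteq R_i\subseteq N(M_i)$, and telescoping $c_i=c_{i-1}+1-q_i$ from $c_1=1$ to $c_t=1$ gives $\sum_{i=2}^t q_i=t-1$) is exactly the paper's proof, spelled out with more care; the paper records the same two facts in two sentences and passes directly to the conclusion. The paper does \emph{not} argue injectivity of the $w_{i,j}$ at all: it simply writes $\sum q_i=|W|$, which is harmless since only the bound $|W|\le t-1$ is used downstream.

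Your injectivity argument, however, has real gaps. In the $i<i'$ case your assertion that ``every endpoint of $w_{i',j'}$ lies in $N_{i'}$'' is false whenever $w_{i',j'}$ equals $f_{i',j'}^V$ or $f_{i',j'}^H$: one endpoint then lies in $U_{i'-1}$, which is disjoint from $N_{i'}$. (The fix is easy: note instead that $w_{i',j'}$ always has an endpoint in $R_{i'}$, while every endpoint of $w_{i,j}$ lies in $U_i$, and $R_{i'}\cap U_i=\emptyset$.) More seriously, in the last sub-case your bichordality sketch overlooks a chord: the edge $\{v,h\}=w_{i,j}$ itself is a chord of the cycle you build, since $v$ and $h$ are separated on the cycle by the two component-paths. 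Iterating the chord analysis you describe eventually reduces to a $6$-cycle $v\text{--}a\text{--}b\text{--}h\text{--}b'\text{--}a'\text{--}v$ with $a,b$ in component $j$ and $a',b'$ in component $j'$; the only chord compatible with the bipartition and with the disjointness of the two components is $\{v,h\}$, which splits the $6$-cycle into two $4$-cycles and thus satisfies chordal bipartiteness without forcing $j=j'$. Neither the shortest-path choices nor the maximality of $f_{i,j}^V,f_{i,j}^H$ are touched by this chord, so the argument as written does not close the case.
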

\begin{proof}
	Observe that for every $i=1,\ldots,t$, the subgraph $G[N_i\setminus\cup_{k=i+1}^{t}N_k]$ is connected, since $M_i\subseteq N_i\setminus\cup_{k=i+1}^{t}N_k\subseteq N_i=N(M_i)$. Moreover, $G[\cup_{k=1}^{t}N_k]=G$ is connected, therefore $t-1=\sum_{i=2}^t q_i=|W|$.
\end{proof}

By~\Fref{case:conn}, there exists a subset $Z_i\subseteq E(M_i)$, such that for any edge $e_0\in E(G[N_i])$ there exists an edge $e_1\in Z_i$ which has $r$-vision of $e_0$ in $G[N_i]$, and $|Z_i|\le \frac43 (|V(M_i)|-1)$.

Let $Z=W\cup \left(\cup_{i=1}^t Z_i\right)$. An easy calculation gives that
\begin{align*}
	|Z| & \le (t-1)+\sum_{i=1}^t \frac{4|V(M_i)|-4}{3}\le \frac{4|V(M)|-4t+3(t-1)}{3}\le \\
	    & \le \frac{4(|M_H|+|M_V|-1)}{3}.
\end{align*}

Take an arbitrary edge $e_0=\{v_0,h_0\}\in E(G)$. We have three cases.

\begin{enumerate}
	\item
	      If $e_0\in F_{i,j}^V$ for some $i,j$, then we claim that $f_{i,j}^V$ has $r$-vision of $e_0$. Suppose not; then $f_{i,j}^V\cap e_0=\emptyset$, and $f_1:=\{v_0\}\cup (f_{i,j}^V\setminus N_i)\notin E(G)$ or $f_2:=\{h_0\}\cup (f_{i,j}^V\cap N_i)\notin E(G)$. By \Fref{claim:f1} at least one of them is in $E(G)$. Suppose $f_1\in E(G)$ and $f_2\notin E(G)$. For any edge $e\in F_{i,j}^V$ intersecting $f_{i,j}^V\cap N_i$, there is an edge $f(e)\in E(G)$ which intersects both $e$ and $e_0$. As $f(e)\neq f_2$, we must have $f(e)=(f_{i,j}^V\cap N_i)\cup (e\setminus N_i)$. Furthermore, any edge $g\in F_{i,j}^V$ intersecting $f_{i,j}^V\setminus N_i$ is trivially intersected by $f_1$ also.
	      Thus, $f_1$ intersects at least as many edges as $f_{i,j}^V$, and $f_1$ intersects $e_0$ too, which contradicts the choice of $f_{i,j}^V$. By symmetry, we are also done if $f_1\notin E(G)$ and $f_2\in E(G)$.

	      \medskip

	      If $w_i=f_{i,j}^V$, then $w_i$ trivially has $r$-vision of $e_0$. If both $f_{i,j}^V$ and $f_{i,j}^H$ exist, we have two cases.
	      \begin{itemize}
		      \item If $v_0\in f_{i,j}^V$, then $v_0\in w_i$ too, so $w_i$ has $r$-vision of $e_0$.

		      \item Otherwise, \Fref{claim:f2} yields that $\{v_0\}\cup (f_{i,j}^H\cap N_i)\in E(G)$. Also, $f_{i,j}^V$ has $r$-vision of $e_0$, so $\Big\{f_{i,j}^H\cap N_i,v_0,h_0,f_{i,j}^V\cap N_i\Big\}$ is the vertex set of a $C_4$ in $G$, so $w_i$ has $r$-vision of $e_0$.
	      \end{itemize}

	\item If $e_0\in F_{i,j}^H$ for some $i,j$, the same argument as above gives that $w_{i,j}$ has $r$-vision of $e_0$.

	\item If neither of the previous two cases holds, then $e_0\in E(G[N_i])$ for some $i$, so some element of $Z_i$ has $r$-vision of it.
\end{enumerate}

Thus, $Z$ satisfies \Fref{thm:mainprime}, and the proof is complete.

\section{Algorithmic aspects}\label{sec:algo}

Finding a minimum cardinality horizontal mobile $r$-guard system, which is also known as the \textsc{Minimum cardinality Horizontal Sliding Cameras} or \textsc{MHSC} problem,
is known to be polynomial~\cite{KM11} in orthogonal polygons without holes. In orthogonal polygons with holes, the problem is \textsc{NP}-hard as shown by \citet{BCLMMV16}. In their paper, a polynomial time constant factor approximation algorithm for the \textsc{MHSC} problem is described, too.
As explained in \Fref{sec:translating}, the \textsc{MHSC} problem translates to the \textsc{Total Dominating Set} problem in the pixelation graph (\Fref{sec:translating}), which can be solved in polynomial time for chordal bipartite graphs~\cite{DMK90}.

\medskip

\textcolor{highlightnew}{The minimum cardinality weakly cooperative mobile guard set problem in two-dimensional grids (\textsc{MinWCMG} for short) is \textsc{NP}-complete \cite{MR2310594}. However, \citeauthor{MR2310594} also propose a quadratic time algorithm for \textsc{MinWCMG} in simple grids. This is exactly the same problem to which we reduce our problem in \Fref{sec:translating}.}

\medskip

Finding a minimum cardinality mixed vertical and horizontal
mobile $r$-guard system (also known as the \textsc{Minimum cardinality Sliding Cameras} or \textsc{MSC} problem) has been shown by \citet{DM13} to be \textsc{NP}-hard for orthogonal polygons with holes. For orthogonal polygons without holes, the problem translates to the \textsc{Dominating Set} problem in the pixelation graph. This reduction in itself has little use, as \citet{MB87} have shown that \textsc{Dominating Set} is \textsc{NP}-complete even in chordal bipartite graphs. To our knowledge, the complexity of \textsc{MSC} is still an open question for orthogonal polygons. There is, however, a polynomial time 3-approximation algorithm by \citet{KM11} for the \textsc{MSC} problem for $x$-monotone orthogonal polygons without holes. Also, for an orthogonal polygon of $n$ vertices, a covering set of mobile guards of cardinality at most $\left\lfloor (3n+4)/16 \right\rfloor$ (which is the extremal bound shown by \citet{Ag}) can be found in linear time~\cite{GyM2016}. In case holes are allowed,~\cite{BCLMMV16} give a polynomial time constant factor approximation algorithm.

\medskip

The algorithm for the \textsc{MHSC} problem in~\cite{KM11} relies on a polynomial algorithm solving the \textsc{Clique Cover} problem in chordal graphs. Our analysis of the $R$-tree structures and the pixelation graph allows us to reduce the polynomial running time to linear.

\begin{theorem}\label{thm:mgalg}
	The algorithm in Appendix~\ref{algo:mhsc} finds a solution to the \textsc{MHSC} problem in linear time for simple orthogonal polygons.
\end{theorem}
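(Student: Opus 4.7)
The plan is to use the R-tree structure developed in \Fref{sec:translating} to reduce the MHSC problem to a minimum path-hitting problem in the tree $T_H$, and then solve it in linear time. By \Fref{claim:treeprop}, $N_G(v)$ is a path in $T_H$ for every $v\in S_V$, and from the dictionary in \Fref{table:translation}, $M_H\subseteq S_H$ is a horizontal mobile $r$-guard system iff $M_H\cap N_G(v)\neq\emptyset$ for every $v\in S_V$. So the algorithmic task reduces to: given $T_H$ and, for each $v\in S_V$, a path $N_G(v)$ specified by its two endpoints in $T_H$, find a minimum vertex set of $T_H$ intersecting every such path.

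First, I would preprocess the polygon in linear time using a standard horizontal plane sweep. The sweep yields $S_H$, $S_V$, $T_H$, and, for each $v\in S_V$, the two endpoints $a(v),b(v)$ of $N_G(v)$ in $T_H$---these are precisely the horizontal slices adjacent to the top and bottom sides of~$v$. This representation has total size $O(n)$, and crucially avoids ever materializing $G$, whose edge set can have size $\Theta(n^2)$ (e.g.\ for a staircase polygon).

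Second, I would solve the path-hitting problem on $T_H$ by a bottom-up greedy sweep. Root $T_H$ arbitrarily and visit nodes in post-order; when leaving a node $h$, if some still-unhit path $N_G(v)$ is entirely contained in the subtree rooted at $h$, add $h$ to $M_H$ and mark every path through $h$ as hit. Correctness follows from a standard exchange argument: among uncovered paths at the moment $h$ is considered, $N_G(v)$ cannot be hit by any ancestor of $h$ (else the path would leave the subtree), so any optimal solution must pick at least one vertex of $N_G(v)$; replacing it by $h$ preserves all paths hit by that vertex, since $h$ dominates its descendants with respect to paths wholly inside the subtree rooted at $h$.

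The main obstacle is executing both the detection of unhit paths and the marking step within a linear time budget, since a single $h$ could be on $\Omega(n)$ paths and naive marking would cost $\Omega(n^2)$ in total. To control this, I would precompute the top vertex $t(v)=\mathrm{LCA}_{T_H}(a(v),b(v))$ using an $O(n)$-preprocessing $O(1)$-query LCA table, bucket each $v\in S_V$ at the deeper of its two endpoints, and use a union-find / pointer-skipping technique so that each vertical slice is touched a constant number of times throughout the sweep. With this, every path is discovered as ``about to escape'' at exactly one post-order moment and subsequently never revisited. Combined with the $O(n)$ preprocessing, the total running time is $O(n)$, as required.
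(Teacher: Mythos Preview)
Your reduction and greedy are essentially the paper's own: it too observes that $N_G(v)$ is a path in $T_H$, roots $T_H$, computes for each $v\in S_V$ the top node $h_r(v)=\mathrm{LCA}(a(v),b(v))$ via an offline linear-time LCA algorithm, and greedily selects the $h_r(v)$ of the deepest uncovered path---your post-order sweep on nodes of $T_H$ is just the dual view of the paper's distance-decreasing sweep over $S_V$, and both output the same set of LCA vertices.

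One genuine gap: you assert that a ``standard horizontal plane sweep'' produces $S_H$, $S_V$, $T_H$ and the endpoint pairs in linear time. A plane sweep in the usual sense sorts the horizontal edges by $y$-coordinate and costs $\Theta(n\log n)$; indeed the paper itself remarks that replacing its $R$-tree construction by sweeps pushes the running time up to $O(n\log n)$. To get true linear time the paper invokes a specific result (Gy\H{o}ri--Hoffmann) that builds $T_H$ and $T_V$ directly from the boundary description of the polygon without sorting. You should either cite such a linear-time $R$-tree construction or explain how your sweep avoids the sorting bottleneck. A secondary point in the same vein: generic union--find is $O(n\,\alpha(n))$, so for a strictly linear bound you need the Gabow--Tarjan variant (or the simple marking scheme the paper uses, which amortises to $O(n)$ because each node of $T_H$ is marked at most once).
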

\begin{proof}
	\citet[Section 5]{GyH} showed that both the horizontal $R$-tree $T_H$ and the vertical $R$-tree $T_V$ of $D$ can be constructed in linear time.

	\medskip

	The main idea of the algorithm is to only sparsely construct the pixelation graph $G$ of $D$. Observe, that the neighborhood of a vertical slice in $G$ is a path in $T_H$, and vice versa. Label each horizontal edge of $D$ by the horizontal slice that contains it. Furthermore, label each vertical edge of each horizontal slice by the edge of $D$ containing it; do this for the horizontal edges of vertical slices as well. This step also takes linear time. The endpoints of a path induced by the neighborhood of any node in $G$ (see \Fref{claim:treeprop}) can be identified via these labels in $O(1)$ time.

	\medskip

	In \Fref{sec:translating}, we showed that a horizontal guard system is a subset of $V(T_H)$ which intersects (covers) each element of $\mathcal{F}_H=\{N_G(v)\ |\ v\in V(T_V)\}$. Dirac's theorem~\cite[p.~10]{frank_dopt} states that $\nu$, the maximum number of disjoint subtrees of the family, is equal to $\tau$, the minimum number of nodes covering each subtree of the family. Obviously, $\nu\le \tau$. The other direction is proved using a greedy algorithm:
	\begin{enumerate}
		\item Choose an arbitrary node $r$ of $T_H$ to serve as its root. The distance of a vertical slice $v\in V(T_V)$ from $r$ is $\mathrm{dist}_r(v)=\min_{h\in N_G(v)}\mathrm{dist}(h,r)$, and let $h_r(v)=\arg\min_{h\in N_G(v)}\mathrm{dist}(h,r)$.
		\item Enumerate the elements of $V(T_V)$ in decreasing order of their distance from $r$, let $v_1,v_2,\ldots,v_{|V(T_V)|}$ be such an indexing. Let $S_0=\emptyset$.
		\item If $N_G(v_i)$ is disjoint from the elements of $\{N_G(v)\ |\ v\in S_{i-1}\}$, let $S_i=S_{i-1} \cup \{ v_i\}$; otherwise let $S_i=S_{i-1}$.
	\end{enumerate}
	We claim that $\{h_r(v)\ |\ v\in S_{|V(T_V)|}\}$ is a cover of $\mathcal{F}_H$. Suppose there exists $v_j\in V(T_V)$ such that $N_G(v_j)$ is not covered. Let $i$ be the smallest index such that $v_i\in S_i$ and $N_G(v_j)\cap N_G(v_i)\neq\emptyset$. Clearly, $i<j$, therefore $\mathrm{dist}_r(v_i)\ge \mathrm{dist}_r(v_j)$. However, this means that $h_r(v_i)\in N_G(v_j)$.

	\medskip

	Now $\{h_r(v)\ |\ v\in S_{|V(T_V)|}\}$ is a cover of the same cardinality as the disjoint set system $\{N_G(v)\ |\ v\in S_{|V(T_V)|}\}$, proving that $\nu=\tau$.

	\medskip

	Each neighborhood $N_G(v)$ for $v\in V(T_V)$ is the vertex set of a path in $T_H$. Therefore, the first part of the algorithm, including calculating $\mathrm{dist}_r(v)$ and $h_r(v)$ for each $v$, can be performed in $O(n)$ time, using the off-line lowest common ancestors algorithm of \citet{MR801823}.

	\medskip

	Calculating the distance decreasing order takes linear time via breadth-first search started from the root. In the $i^{\mathrm{th}}$ step of the third part of the algorithm, we maintain for each node in $V(T_H)$ whether it is under an element of $\{h_r(v)\ |\ v\in S_i\}$. Summed up for the $|V(T_H)|$ steps, this takes only linear time. $N_G(v_{i+1})$ is disjoint from the elements of $\{N_G(v)\ |\ v\in S_i\}$ if and only if one of the ends of the path induced by $N_G(v_{i+1})$ is under one of the elements of $\{h_r(v)\ |\ v\in S_i\}$, which now can be checked in constant time. Thus, the algorithm takes in total some constant factor times the size of the input time to run.
\end{proof}

\textcolor{highlightnew}{If we replace the construction of $R$-trees with sweeps in \Fref{algo:mhsc}, the modified algorithm solves the \textsc{MinWCMG} problem in simple grids. The two sweeps that recover the $R$-trees now dominate the increased time complexity of $O(n\log n)$.}

\medskip

The computational complexity of the \textsc{Point guard} problem in orthogonal polygons with or without holes has attracted significant interest since the inception of the problem. \citet{MR1330860} showed that even for orthogonal polygons (without holes), \textsc{Point guard} is \textsc{NP}-hard. However, a minimum cardinality \textsc{Point $r$-guard} system  of an orthogonal polygon can be computed in $\tilde O(n^{17})$ time~\cite{WM07}. To our knowledge, the exponent of the running time is still in the double digits, which makes its use impractical. Therefore, approximate solutions to the problem are still relevant. A linear-time 3-approximation algorithm is described in~\cite{LWZ12}.

\begin{corollary}
	An $\frac83$-approximation of the minimum size of a point $r$-guard system of a simple orthogonal polygon can be computed in linear time.
\end{corollary}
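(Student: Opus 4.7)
The plan is to combine the extremal bound of \Fref{thm:main} with the linear-time algorithm of \Fref{thm:mgalg}. First, I would invoke \Fref{thm:mgalg} to compute a minimum horizontal mobile $r$-guard system $M_H$ of $D$ in $O(n)$ time, and by running the same algorithm with the roles of the two axes swapped obtain a minimum vertical mobile $r$-guard system $M_V$ in the same time.

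For the approximation ratio, let $p^*$ denote the minimum size of a point $r$-guard system of $D$. Any point $r$-guard $y$ can be replaced by the maximal horizontal line segment of $D$ through $y$: by the restriction in the point $r$-guard definition, this segment is a legitimate horizontal mobile $r$-guard, and it clearly $r$-covers everything that $y$ did. Hence $m_H\le p^*$, and by symmetry $m_V\le p^*$, so $m_H+m_V\le 2p^*$. The proof of \Fref{thm:main} is constructive in $M_H$ and $M_V$: it produces a set $Z\subseteq E(G)$ with $|Z|\le\lfloor 4(m_H+m_V-1)/3\rfloor$ whose associated pixel centroids form a point $r$-guard system of $D$. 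Combining the two bounds,
\[|Z|\le\left\lfloor\tfrac{4(2p^*-1)}{3}\right\rfloor<\tfrac{8p^*}{3},\]
yielding the claimed $8/3$-approximation.

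The remaining task, which I expect to be the main obstacle, is implementing the construction of $Z$ in $O(n)$ time. The $R$-trees $T_H$ and $T_V$ are computed by \Fref{thm:mgalg} in linear time, and the labeling scheme used there recovers in $O(1)$ the endpoints of any path in $T_H$ or $T_V$ corresponding to a neighborhood in $G$; thus the pixelation graph $G$ (whose edge count may be quadratic in $n$) need never be materialized. Classifying the edges of $M'$ as convex, reflex, side, or internal, and building the auxiliary graph $X$ (which has $O(n)$ nodes and $O(n)$ edges because it is a disjoint union of cycles and paths), can then be performed by sweeps along the $R$-trees in $O(n)$ time. Phases~1--5 of \Fref{sec:constructing} perform only local surgeries on $X_j$ and constant-work updates to $Z'_j$ per iteration; with doubly linked adjacency lists supporting $O(1)$ deletion, the bookkeeping is routine and the total cost stays proportional to $|V(X)|=O(n)$, finishing the proof.
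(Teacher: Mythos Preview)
Your derivation of the approximation ratio is correct and is exactly what the paper does: from $m_H\le p$ and $m_V\le p$ together with \Fref{thm:main} one gets $p\le\tfrac{4}{3}(m_H+m_V)\le\tfrac{8}{3}p$, and $m_H,m_V$ are computable in linear time by \Fref{thm:mgalg}.

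The gap is that you have misread what the corollary asserts. It only claims that the \emph{number} $p$ can be $\tfrac83$-approximated in linear time; the paper's entire proof is to output the single number $\tfrac{4}{3}(m_H+m_V)$. Your third paragraph instead tries to produce an actual point $r$-guard system $Z$ in linear time. That is a strictly stronger statement, and immediately after the corollary the paper explicitly says it can only construct the corresponding guard system in $O(n^2)$ time (because the pixelation graph may have $\Omega(n^2)$ edges) and poses the linear-time construction as an open problem. Your sketch does not close this gap: while $X$ indeed has $O(n)$ nodes and edges, Phases~2--3 repeatedly require locating a chord $f$ (or $g$) of $M'$ that completes a $C_4$ with three consecutive reflex/side edges, and Phase~4 requires identifying the endpoints $v_3,v_4$ of $N_{M'}(h_3)\cap N_{M'}(h_4)$ for adjacent $h_3,h_4$ in the $R$-tree on $M'_H$. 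These queries live in $M'$, not in $X$, and you have not explained how to answer them in amortized $O(1)$ without touching a potentially quadratic number of edges of $M'$. So either drop the third paragraph entirely (the corollary does not need it), or recognize that you are attempting something the authors left open and that a one-paragraph sketch does not settle it.
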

\begin{proof}
	Compute $m_V$ and $m_H$ using the previous algorithm. By \Fref{thm:main} and the trivial statement that both $m_H\le p$ and $m_V\le p$, we get that $\frac43\cdot(m_H+m_V)$ is an $\frac83$-approximation for $p$.
\end{proof}

Unfortunately, we can only compute the corresponding solution (guard system) in $O(n^2)$, because the pixelation graph may have $\Omega(n^2)$ edges. We consider it an interesting open problem to reduce this running time to linear as well, as such an algorithm would be comparable to the algorithm of~\citet{LWZ12}.


\phantomsection
\renewcommand*{\bibfont}{\small}
\printbibliography

\clearpage

\begin{appendices}
	\section{A linear time algorithm for \textsc{MHSC}}\label{algo:mhsc}
	\begin{algorithmic}[1]
		\Function{Solve \textsc{MHSC}}{P}
		\State $T_H\gets \Call{horizontal $R$-tree}{P}$\Comment{Algorithm~of~\cite[Section 5]{GyH}}
		\State $T_V\gets \Call{vertical $R$-tree}{P}$
		\ForAll{vertical slice $t\in T_V$}
		\State $a,b\gets$ vertical sides of $P$ bounding $t$
		\State $h_a\gets$ horizontal slice in $V(T_H)$ containing $a$
		\State $h_b\gets$ horizontal slice in $V(T_H)$ containing $b$
		\State $\textit{N}[t]\gets \{h_a, h_b\}$
		\EndFor
		\Statex
		\State $r\gets $ arbitrary node of $T_H$ to serve as root
		\State $\textit{dist}[]\gets \Call{Breadth First Search}{T_H, r}$\Comment{distance from $r$}
		\Statex
		\State $\textit{LCA}[]\gets \Call{Lowest Common Ancestors}{T_H, r, \textit{N}[]}$\Comment{Algorithm of~\cite{MR801823}}
		\State \Comment{$\textit{LCA}[t]$ contains the lowest common ancestors of the elements of $\textit{N}[t]$}
		\Statex
		\State $S\gets\emptyset$
		\State Set every node of $T_H$ unmarked
		\ForAll{$t\in V(T_V)$ so that $\textit{dist}[\textit{LCA}[t]]$ is not increasing}\Comment{reverse BFS-order}
		\If{both elements of $\textit{N}[t]$ are unmarked}
		\State $S\gets S\cup \{\textit{LCA}[t]\}$
		\State \Call{Set Mark}{\textit{LCA}[t]}
		\EndIf
		\EndFor
		\State\Return $S$
		\EndFunction

		\Statex

		\Function{Set Mark}{$u$}
		\State mark $u$
		\ForAll{neighbor $w$ of $u$ in $T_H$}
		\If{$\textit{dist}[w]>\textit{dist}[u]$ and $w$ is unmarked}
		\State \Call{Set Mark}{w}
		\EndIf
		\EndFor
		\EndFunction
	\end{algorithmic}
\end{appendices}

\end{document}